\tikzset{snake it/.style={decorate, decoration=snake}}
\def\E{\mathbb{E}}
\def\P{\mathbb{P}}
\def\pr{\mathbb{P}}
\newcommand{\bracedincludegraphics}[2][]{%
\newcommand{\ind}[1]{{\rm \bf1}\{#1\}}

  \sbox0{$\vcenter{\hbox{\includegraphics[#1]{#2}}}$}%
  \left\lbrace
    \vphantom{\copy0}
  \right.\kern-\nulldelimiterspace
  \underbrace{\vrule width0pt depth \dimexpr\dp0 + .3ex\relax\box0}}
\newenvironment{assump}[2][]
{\begin{assumption}[#1]}
    {\end{assumption}}
\numberwithin{equation}{section} 
\newtheorem{theorem}{Theorem}
\newtheorem{lemma}[theorem]{Lemma}
\newtheorem{assumption}[theorem]{Assumption}
\theoremstyle{definition}
\newtheorem{remark}[theorem]{Remark}
\title{Random walks in Weyl chambers}
\author{Denis Denisov, Will FitzGerald and Kaiyuan Zhang
\thanks{
  {\it Emails:} denis.denisov@manchester.ac.uk, william.fitzgerald@manchester.ac.uk 
  and kaiyuan.zhang@postgrad.manchester.ac.uk.
   \newline
    The research of D. Denisov and W. FitzGerald was supported by a Leverhulme Trust Research Project Grant  RPG-2021-105.\newline  
    {\it AMS 2020 subject classifications: \/}
    Primary 60G50, 60G40; secondary  60C05, 60F17.\newline 
    {\it Key words and phrases.\/} Ordered random walks, Doob h-transform, Weyl chamber, discrete harmonic function. 
}
}
\affil{University of Manchester}
\begin{document}
\maketitle

\begin{abstract}
We study a $d$-dimensional random walk with zero mean and finite variance in the Weyl chambers of type C and D. Under optimal moment assumptions we construct  positive harmonic functions for  random walks killed on exiting  Weyl chambers. We  also find the tail asymptotics for the exit time of the random walk from  Weyl chambers. 
\end{abstract}

 \section{Introduction, assumptions and main results}
    \subsection{Introduction}
\label{intro}

The study of discrete harmonic functions and tail asymptotics for the exit time of a random walk in a cone involves a dependency between the moment conditions required for the increments of the random walk and the geometry of the cone~\cite{denisov_wachtel15,denisov_wachtel19,denisov_wachtel21}. This has not yet been fully understood, even in some of the most natural and simplest examples of cones. The present paper fills this gap for Weyl chambers of type C and D and, more broadly, harmonic polynomials related to reflection symmetries. 

Brownian motion in a Weyl chamber of type C is a collection of nonintersecting Brownian excursions while Brownian motion in a Weyl chamber of type D is a collection of nonintersecting Brownian motions with a reflecting boundary at zero. Both models have attracted significant interest in mathematical and statistical physics contexts. Brownian motions in Weyl chambers of type C and D were first introduced by Grabiner~\cite{grabiner}. They have been studied from various points of view such as modelling fibrous structures~\cite{dg}, wetting and melting~\cite{fisher}, connections to free fermions~\cite{schehr1}, Yang Mills theory~\cite{schehr2}, random matrix theory~\cite{tracy_widom, liechty_wang}, Brownian last-passage percolation~\cite{bfpsw}, positive temperature analogues \cite{nteka} and polymer endpoint distributions \cite{nguyen_remenik}. They are often called viscous walkers or watermelons with a wall. 

All of these works consider either Brownian motions or simple symmetric random walks with a boundary at zero. 
Since in this situation there is no overshoot one can obtain  an explicit 
expression for the positive harmonic function~\cite{konig2002}. 
In the  case of general random walks 
one has to consider some additional corrections.  
The discrete harmonic function and tail asymptotics for the exit time for general random walks in Weyl chambers of type C and D have been considered from two different directions. The first direction considered by~\cite{KonigWolfgang2010Rwct} was to extend the theory of ~\cite{denisov_wachtel10} from the Weyl chamber of type A to those of type C and D. This required an additional assumption that the increments of the random walk are symmetric in order that the Vandermonde determinant of type C or D respectively is harmonic for the free random walk. Without this symmetry assumption, this property is no longer true and presents one of the major technical obstacles. 
The second direction is to view random walks in Weyl chambers of type C and D as an example of the more general theory of random walks in cones developed in~\cite{denisov_wachtel15,denisov_wachtel19,denisov_wachtel21}. This approach allows the symmetry condition from~\cite{KonigWolfgang2010Rwct} to be removed but at the expense of needing to require non-optimal moment conditions. 

For Brownian motion in cones, the tail asymptotics for the exit time from a cone were found in \cite{banuelos_smits, deblassie} where the exponent $p/2$  governing the decay is related to the minimal eigenvalue of a Laplace-Beltrami operator with the cone determining the domain of the eigenvalue problem. This was first extended to random walks in cones in~\cite{denisov_wachtel15} where the geometry of the cone was constrained by an extendability condition and the random walks were required to have $p$ finite moments. The geometry condition was later improved to include
convex cones~\cite{denisov_wachtel19} and $C^{1, \alpha}$ and starlike cones~\cite{denisov_wachtel21}. This theory has been extended in various other directions including the study of tail asymptotics for broader classes of processes such as products of random matrices \cite{grama} or related questions such as the study of the Green function \cite{duraj}. 

Although~\cite{denisov_wachtel15,denisov_wachtel19,denisov_wachtel21} require moment conditions on the increments of the random walk that are close to optimal for the generality of cones they consider, these moment conditions are not optimal when the cone has some additional structure. For example, for the Weyl chamber of type A, the works~\cite{denisov_wachtel15,denisov_wachtel19,denisov_wachtel21} require  $\E|X|^{\frac{d(d-1)}{2}}$ to be finite which is not the best condition; in~\cite{denisov_wachtel10} 
it is sufficient to have $\E|X|^{d-1}$ finite for $d\ge 3$. 
This relationship between moment conditions and geometry of the cone is the focus of this paper.
With respect to the moment conditions required for the random walk, the distinguishing feature of Weyl chambers from general cones turns out to be that the positive harmonic function for Brownian motion killed at the boundary of the Weyl chamber is a polynomial (the Vandermonde determinant of corresponding type). Therefore we find that the natural context to study the relationship between geometry and moment conditions is within the class of cones where the positive harmonic function for Brownian motion killed at the boundary of the cone is a homogeneous harmonic polynomial. 

A further motivation for our studies is 
universality of limiting distributions 
for random walks in Weyl chambers of type $C$ and $D$. In a recent paper~\cite{DFW24} universality was shown for random walks in Weyl chamber of type $A$ when dimension $d$ grows sufficiently slowly. We hope that the present paper will provide a foundation for consideration of growing number of random walks in Weyl chambers of type $C$ and $D$. 

\subsection{Background and notation for ordered random walks}
  
Let $(X_{i}(j))_{ 1 \leq i \leq d}$ be independent  and identically distributed random variables 
distributed as $X$. 
The Weyl chamber of types $A$, $C$ and $D$ are defined as follows,
\begin{align*}
W_A &= \{(x_1, \ldots, x_d) \in \mathbb{R}^d : x_1 < x_2 <\ldots < x_d\}\\ 
W_C &=\{(x_1, \ldots, x_d) \in \mathbb{R}^d : 0<x_1 < x_2 < \ldots < x_d\}\\
W_D &= \{(x_1, \ldots, x_d) \in \mathbb{R}^d : |x_1| < x_2 <\ldots < x_d\}.
\end{align*}
Some of the arguments for these chambers coincide, so 
we will write $Z$ for $A$,  $C$ or $D$. Throughout we always take $d \geq 2$.

Consider a $d$-dimensional random walk $(x+S(n))_{n \geq 0} = (x_1+S_1(n), \ldots, x_d+S_d(n))_{n \geq 0}$ started from 
$x = (x_1,\ldots,x_d)$, where   
\[
S_i(n) = \sum_{j=1}^n X_{i}(j), \quad  n \geq 1, i \in \{ 1, \ldots, d\}. 
\]
For a starting point $x \in W_Z$ 
let 
\[
  \tau_x  := \inf\{n \geq 1\colon   x+S(n) \notin W_Z\}.
\]
be the exit time from $W_Z$.  

In this paper, we consider random walks conditioned to stay in the Weyl chambers $A,C$ or $D$. 
Traditionally, there are two approaches to studying conditioned random walks. 
The first approach is consider the behaviour of the random walks up to time $n$ 
given that the random walk stays in a Weyl chamber up to time $n$, that is given the event $\tau_x>n$.
For one-dimensional random walks, which corresponds either to $d=2$ and $Z=A$ or $d = 1$ and $Z = C$ in our setting,  
a corresponding functional central 
limit theorem showing convergence to the Brownian meander was proved in~\cite{Iglehart74} and~\cite{Bolthausen76}. 
The second approach is to consider the behaviour of the random walks
given that random walks stay in a Weyl chamber forever, that is 
given the event $\tau_x=\infty$ in some sense that needs to be defined. 
This can be achieved using a Doob $h$-transform, 
which requires finding a positive function $h=V^Z$ on $W_Z$ such that 
\begin{equation}\label{eq:rw.harmonicity}
\E_x[V^Z(x+ S(1)); \tau_x > 1]  = V^Z(x), \quad x \in W_Z.  
\end{equation} 
The limiting random walk can then be defined using the transformed measure 
$\hat{\P}$ and can be represented as,
\begin{multline*}
\hat{\P}(x+S(k)\in dy_k,k=1,\ldots,n) \\
=\P(x+S(k)\in dy_k,k=1,\ldots,n;  \tau_x>n)
\frac{V^Z(y_n)}{V^Z(x)}. 
\end{multline*}
For one-dimensional random walks,  
the corresponding functional central 
limit theorem showing convergence to the three-dimensional Bessel process was proved in~\cite{BJD06}. 
In these one-dimensional cases, one has 
the powerful Wiener-Hopf factorisation at their disposal to analyse the 
asymptotic behaviour of $\pr(\tau_x>n)$. In particular 
the harmonic function required by Doob's $h$-transform is given 
by a renewal process of ladder heights. 

For $d>2$ the case $Z=A$ was first considered in~\cite{denisov_wachtel10, eichelsbacher_konig}. 
These studies made use of  
the Vandermonde determinant 
\[
\Delta(x) = \prod_{1 \leq i < j \leq d} (x_j - x_i), 
\] which is a harmonic function for the free random walk.  
Then, under certain conditions, one can show that 
\[
 V^A(x) = \Delta(x) - \E\left[\Delta(x+S({\tau_x}))\right]
\]
is a positive harmonic function for the killed
random walk in the sense that it satisfies~\eqref{eq:rw.harmonicity}. 
Later it was shown in~\cite{KonigWolfgang2010Rwct} that under further symmetry 
assumptions on the random walk increments, for Weyl chambers of type $C$ and $D$ we can define similarly 
\begin{equation}\label{eq:vz.symmetry}
 V^Z(x) = h^Z(x) - \E\left[h^Z(x+S({\tau_x}))\right], \quad x\in W_Z
\end{equation}
and this satisfies~\eqref{eq:rw.harmonicity}.
Here $h^Z : W_Z \rightarrow \mathbb{R}_+$ is the positive harmonic function of Brownian motion killed at exiting $W_Z$ given correspondingly by 
\begin{equation*}
    h^A(x)= \Delta(x), \quad h^D(x)= \prod_{1 \leq i < j \leq d} (x_j^2 - x_i^2)  \text{ and } \quad h^C(x)= h^D(x) \prod_{i=1}^d x_i.
\end{equation*}
The symmetry assumption in~\cite{KonigWolfgang2010Rwct}
ensures that $\E[h^C(x+S(n))]=h^C(x)$ and $\E[h^D(x+S(n))]=h^D(x)$ 
in a similar way to the case of the Weyl chamber of type $A$. One should note that without 
additional symmetry assumptions these equations are not true and the function~\eqref{eq:vz.symmetry} does not satisfy~\eqref{eq:rw.harmonicity}. Instead some further correction terms are required as can be seen in~\cite{denisov_wachtel15,denisov_wachtel19,denisov_wachtel21} for general cones.


\subsection{Assumptions and main results}
For Weyl chambers, we will impose the following moment assumptions. 
\begin{assump}{(M)}(Moment assumptions)\label{ass-m}\ 
    \begin{enumerate}
        \item[(M1)]\label{ass-M1} $\E[X]=0$ and $\E[X^2]=1$, $i=1,\ldots, d$.
        \item[(M2)]\label{ass-M2}  
        In the case $d>3$ we additionally assume that $\E\left[|X|^{r_Z}\right] < \infty$ where $r_A=d-1$, $r_C=2d-1$ and $r_D=2d-2$. 
        When $d\le 3$ in the case of the Weyl chamber of type $A$ 
        and when $d=2$ in the case of the Weyl chamber of type $D$ we additionally  assume $\E\left[|X|^2\log(1+|X|)\right]<\infty$.

    \end{enumerate}   
\end{assump}    

\begin{remark}\label{rem:moments}
\begin{enumerate}[(i)]
    \item 
For the Weyl chamber of type $A$ for $d>3$ the moment assumptions are the same as 
in~\cite{denisov_wachtel10}. 
For $d=3$ we improve over~\cite{denisov_wachtel10},  where  existence of $\E[|X|^{2+\delta}]<\infty$ was required for some $\delta>0$. 
\item For Weyl chamber of types $C$ and $D$ the moment assumption are the same as in~\cite{KonigWolfgang2010Rwct} apart from the case $d=2$ for the Weyl chamber of type $D$, where 
existence of $\E[|X|^{2+\delta}]<\infty$ was required for some $\delta>0$. 
The main improvement over~\cite{KonigWolfgang2010Rwct} is that we do not need 
the symmetry assumption $\E[X^r]=0$ for all odd $r\le r_Z$ used in~\cite{KonigWolfgang2010Rwct}. 
\item
In general cones \cite{denisov_wachtel15, denisov_wachtel19, denisov_wachtel21}, the random walks are required to have $p$ moments where $p$ is related to the minimal eigenvalue of the Laplace-Beltrami operator and where the cone determines the domain of the eigenvalue problem. The value of $p$ in each of the Weyl chambers is given by $d(d-1)/2$ for type A, $d(d-1)$ for type D and $d^2$ for type C. The assumptions on the cone in \cite{denisov_wachtel15, denisov_wachtel19} are satisfied by Weyl chambers of type C and D as they have the required extendability condition or are convex respectively. The condition on the cone in \cite{denisov_wachtel21} is not satisfied, as this requires $C^{1, \alpha}$ smoothness of the cone.   
\item
The assumption $\E[|X|^2\log(1+|X|)]<\infty$ might be 
in some sense optimal, see discussion and examples in \cite[Section 1.3]{denisov_wachtel21} for the Weyl chamber of type $D$ and $d=2$. 
\end{enumerate}
 \end{remark}

 The results for Weyl chamber will follows from a more general setting of 
 homogeneous harmonic polynomials. 
Let $h : \mathbb{R}^d \rightarrow \mathbb{R}$ be a homogeneous harmonic polynomial, that is a polynomial satisfying 
\[
\sum_{j=1}^d\frac{\partial^2 h}{\partial x_j^2}=0
\]
and $h(tx) = t^p h(x)$ where $p$ is the degree of the polynomial. 
We also assume that $h$ has the following form for a sequence of vectors $\alpha_1, \ldots, \alpha_p \in \mathbb{R}^d$
\[
h(x) =  \prod_{i=1}^p \langle x, \alpha_i \rangle.
\]
We consider the cone $K$ of the following form 
\begin{equation}\label{cone.polynomial}
K=\{x\colon \langle x,\alpha_i\rangle>0\}. 
\end{equation}
It follows immediately that $K$ is convex. 
Clearly these assumptions are met by Weyl chambers of types $A,C$ and $D$.



Since $K$ is convex 
    there exists 
    $x_0\in \Sigma=K\cap\{x\colon |x|=1\}$ such that $x_0 + K \subset K$ and ${\rm dist}(x_0+K, \partial K)>0$. 
    In the rest of the text $x_0$ is used to denote a vector satisfying this property.   
    
Let $p$ be the degree of the polynomials $h$ and 
let $r$ be the maximum of degree of all variables in the polynomial. 
For example, for $h(x)=(x_3-x_2)(x_3-x_1)(x_2-x_1)$ we have $p=3$ and $r=2$. 

Let $\delta(x):=\text{dist}(x,\partial K).$
We will summarise our assumption as follows 
on cone $K$ and moments of $X$ below. 
\begin{assump}{(P)}(Assumptions in the polynomial case)\label{ass-m-h}\ 
    \begin{enumerate}
        \item[(P1)]\label{ass-M1-h} Suppose that $\{X_i(j),i=1,\ldots,d,j=1,2,...\}$ is a sequence of $i.i.d.$ random variables
        with common distribution $X$. Suppose that $\E[X]=0$ and $\E[X^2]=1$. 
        \item[(P2)]\label{ass-M2-h}  
        For $r=2$ we assume $\E\left[|X|^2\log(1+|X|)\right]$. 
        For $r>2$ we assume $\E\left[|X|^{r}\right] < \infty$. 
        \item[(P3)] Suppose the harmonic function $h$ is a positive homogeneous polynomial of degree $p$ vanishing at the boundary of $K$. Furthermore, we assume  that 
        there exists a constant $C>1$ such that 
        \begin{equation}\label{eq:harmonic.boundary}
           h(x) \le  C|x|^{p-1}\delta(x),\quad x\in K. 
        \end{equation}
        \item[(P4)] Suppose that the harmonic polynomial has the form for a sequence of vectors $\alpha_1, \ldots, \alpha_p \in \mathbb{R}^d$
\[
h(x) =  \prod_{i=1}^p \langle x, \alpha_i \rangle.
\]
        \end{enumerate}   
\end{assump}


\begin{theorem}
\label{thm:harmonic.function-poly}
Let Assumption~\ref{ass-m-h} hold. 
 Then the function 
\begin{equation}\label{eq:harmonic}
    V(x) := \lim_{n \to \infty} \E \left[h(x+S(n); \tau_x>n\right] 
\end{equation}
is finite, positive and harmonic 
for $(x+S(n))_{n \geq 0}$ killed on leaving $K$, that is 
\begin{equation}
    V(x) =\E [V(x+S(1)); \tau_x>1],\quad    
    x\in K. 
\end{equation}
\end{theorem}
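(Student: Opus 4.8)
The plan is to run the ``martingale plus overshoot'' scheme of \cite{denisov_wachtel10}, which there applies to $h^{A}=\Delta$ (already space--time harmonic for the free walk), after first completing $h$ to a space--time martingale by a polynomial corrector; this corrector is what replaces the symmetry hypothesis of \cite{KonigWolfgang2010Rwct}. Write $Qf(y):=\E[f(y+X)]$. Taylor expanding the polynomial $h$ gives $(Q-I)h=\sum_{|\beta|\ge 2}\frac1{\beta!}\,\partial^{\beta}h\,\E[X^{\beta}]$, and because $\E X=0$ and the coordinates of $X$ are i.i.d.\ with unit variance, the $|\beta|=2$ part equals $\frac12\Delta h=0$; since $Q$ commutes with the Laplacian, every $(Q-I)^{j}h$ is harmonic, hence has degree $\le p-3j$ and vanishes once $3j>p$. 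As $h$ has degree $\le r$ in each variable, all coefficients involved are products of moments $\E[X^{k}]$ with $k\le r$, which are finite by Assumption~(P2) (when $r=2$ one checks, using $\partial_i^4h\equiv0$ and $\Delta^k h\equiv0$, that $(Q-I)h\equiv 0$, so this step is empty). Setting $G(y,m):=\sum_{j\ge0}\binom{-m}{j}(Q-I)^{j}h(y)$ one gets a finite sum with $G(\cdot,0)=h$ and $QG(\cdot,m+1)=G(\cdot,m)$, so $\bigl(G(x+S(m),m)\bigr)_{m\ge0}$ is a martingale; the $j$-th summand of $G-h$ is $O\bigl(m^{j}(1+|y|)^{p-3j}\bigr)$.

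Fix $x\in K$. Stopping the martingale at the bounded time $n\wedge\tau_{x}$ and using $h=G-(G-h)$ gives, for every $n$,
\[
\E[h(x+S(n));\tau_{x}>n]=h(x)-\E[G(x+S(\tau_{x}),\tau_{x});\tau_{x}\le n]-\E[(G-h)(x+S(n),n);\tau_{x}>n].
\]
One must then show that the middle term converges and the last term vanishes as $n\to\infty$; in both, the corrector part $G-h$ is handled summand by summand (the $j$-th piece contributes, on the bulk event, of order $n^{j}\cdot n^{(p-3j)/2}\cdot n^{-p/2}=n^{-j/2}\to0$ in the last term, and of order $m^{-j/2-1}$ in the middle term, and is even more negligible on events carrying a large coordinate), while the genuine new input is the control of the $h$-part, i.e.\ of $\E[h(x+S(\tau_{x}));\tau_{x}\le n]$ and of $\E[h(x+S(n));\tau_{x}>n]$ itself. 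Granting this, $\P(\tau_{x}<\infty)=1$ (from the a priori decay of $\P(\tau_{x}>n)$ of the earlier sections) and dominated convergence yield that the limit in \eqref{eq:harmonic} exists and is finite.

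The heart of the argument, and the main obstacle, is therefore the following: along paths confined to $K$, and at the exit time $\tau_{x}$, the polynomial $h$ cannot be inflated more than what $r$ moments of $X$ allow. Two structural facts are used. First, the coordinates of the increment are \emph{independent}, so an excursion of size $L$ (probability $\asymp L^{-r}$) occurs in a single coordinate at a time and multiplies $h$ by at most $L^{r}$, since only the $\le r$ linear factors of $h$ containing that variable grow; hence such contributions are integrable precisely because $\E|X|^{r}<\infty$, the borderline case $r=2$ being rescued by the extra logarithmic moment (this is exactly where the $\log$ enters). Second, by (P3) the factor of $h$ that crosses the boundary is small at $\tau_{x}$, so $|h(x+S(\tau_{x}))|\le C|X(\tau_{x})|\,(|x+S(\tau_{x}-1)|+|X(\tau_{x})|)^{p-1}$; combined with $\{\tau_{x}=m\}\subseteq\{\tau_{x}>m-1\}\cap\{|X(m)|\ge\delta(x+S(m-1))\}$, the independence of $X(m)$ from $\mathcal F_{m-1}$, and the harmonic-measure estimate that $\delta(x+S(m))$ is small only with probability of order $\delta^{2}/m$ on $\{\tau_{x}>m\}$, one sees that $\E[|h(x+S(\tau_{x}))|;\tau_{x}=m]$ is summable in $m$ (of order $m^{-3/2}$ when $r>2$, of order $m^{-1}(\log m)^{-2}$ when $r=2$). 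Making these heuristics rigorous uses the tail bound for $\tau_{x}$, the moment estimates for the walk killed at $\partial K$, and the strong Brownian approximation established earlier; convexity of $K$ underlies all of these.

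Finally, $V\ge0$ because $h\ge0$ on $K$. Strict positivity follows by first showing $V(tx_{0})$ is bounded below for large $t$ --- started from $tx_{0}$ the walk stays in $K$ for a time of order $t^{2}$ with probability bounded away from $0$, on an event where $h(x+S(n))\asymp t^{p}$ --- and then propagating to arbitrary $x\in K$ via harmonicity and non-degeneracy of $X$, which give $\P_{x}(\tau_{x}>m,\ x+S(m)\in B)>0$ for some $m$ and a ball $B$ on which $V>0$. Harmonicity itself comes from the Markov identity $V_{n}(x)=\E[V_{n-1}(x+S(1));\tau_{x}>1]$ with $V_{k}(y):=\E[h(y+S(k));\tau_{y}>k]$: letting $n\to\infty$, the estimates above provide a domination of $V_{k}(x+S(1))$ on $\{\tau_{x}>1\}$ that is integrable (again through (P3) and (P2)), so dominated convergence gives $V(x)=\E[V(x+S(1));\tau_{x}>1]$.
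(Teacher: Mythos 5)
Your martingale construction is genuinely different from the paper's: you complete $h$ to an exact space--time polynomial martingale $G(y,m)=\sum_j\binom{-m}{j}(Q-I)^jh(y)$ (the corrector terminates because $(Q-I)h$ is again harmonic of degree $\le p-3$), whereas the paper keeps $h$ itself and works with the one-step error $f(x)=\E[h(x+X);x+X\in K]-h(x)$, making $Z_n=h_+(x+Rx_0+S(n\wedge\tau_x))-\sum_{k<n\wedge\tau_x}f(x+Rx_0+S(k))$ a martingale. Your corrector idea is sound and cleanly separates the ``free-walk'' error from the overshoot; the algebra ($\Delta(Q-I)^jh=0$, degree drop by $3$ per step, only moments of order $\le r$ appearing) is correct.

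However, there is a genuine gap at the step you yourself flag as ``the heart of the argument.'' The convergence of $\E[G(x+S(\tau_x),\tau_x);\tau_x\le n]$ and the vanishing of $\E[(G-h)(x+S(n),n);\tau_x>n]$ both rest on inputs you assert but do not prove: the summability of $\E[|h(x+S(\tau_x))|;\tau_x=m]$ under only $r$ moments, the ``harmonic-measure estimate'' that $\delta(x+S(m))\le\delta$ has probability of order $\delta^2/m$ on $\{\tau_x>m\}$, the a priori tail bound for $\P(\tau_x>n)$, and moment estimates for the killed walk on the bulk event. In the paper none of these are available ``from earlier sections'' independently of the harmonic-function construction: they are all consequences of the superharmonic function $V_\beta=h(\cdot+Rx_0)+3U_\beta(\cdot+Rx_0)$, where $U_\beta(y)=\int_K G(x,y)\beta(x)\,dx$ is built from Hirata-type bounds on the Green function of killed Brownian motion and the product structure (P4) of $h$ (Lemmas~\ref{lem:green.bound0}--\ref{sum.beta}); in particular the boundary estimate you invoke is exactly the third line of \eqref{eq:ghat}, and the tail bound of Lemma~\ref{lem:p.tau} is itself derived from $V_\beta$. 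Without constructing some substitute for this object, your one-big-jump and harmonic-measure heuristics do not close, and the claimed orders $m^{-3/2}$ (resp.\ $m^{-1}(\log m)^{-2}$ when $r=2$) remain unproved precisely in the borderline moment regime that is the point of the theorem. The positivity argument has the same character: $V(tx_0)\ge c$ requires a uniform-in-$n$ lower bound (equivalently $V(y)\sim h(y)$ deep inside $K$), which again comes out of the $U_\beta$ estimates.
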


\begin{remark}
One can show that $V(x)\sim h(x)$ as $\delta(x)\to \infty$. We will omit this property since similar computations have already been done in e.g.~\cite{denisov_wachtel21}. 
\end{remark}

We now consider tail asymptotics for the exit time from the cone. For Weyl chambers of type A these questions were studied in \cite{denisov_wachtel10, eichelsbacher_konig}, for type C and D in~\cite{KonigWolfgang2010Rwct} and for general cones in~\cite{denisov_wachtel15,denisov_wachtel19,denisov_wachtel21}. 
In comparison with these papers we have improved moment and symmetry assumptions, see Remark~\ref{rem:moments}. 


\begin{theorem}\label{thm.p.tau-poly}
Let Assumption~\ref{ass-m-h} hold. 
\begin{enumerate}[(i)]
\item  There exist  constants $C$ and $R$ such that,
\begin{equation}
    \P (\tau_x>n) \le C \frac{h(x+Rx_0)}{n^{\frac{p}{2}}}, 
    \quad x\in K. 
\end{equation}
\item Uniformly in $x$ such that $|x|=o(\sqrt n)$, there is a constant $\varkappa$ such that
\begin{equation}
    \P (\tau_x >n) \sim \varkappa \frac{V(x)}{n^{\frac{p}{2}}}, 
    \quad n\to \infty. 
\end{equation}
\item 
For any compact set $D \subset K$, with $|x|=o(\sqrt n)$, there is a constant $c$ such that
\begin{equation}
    \P \left( \frac{x+S(n)}{\sqrt{n}} \in D \mid \tau_x >n \right) \to  c\int_D h(z)\exp\left\{-\frac{|z|^2}{2}\right\} d z, 
    \quad n\to \infty. 
\end{equation}
\end{enumerate}
\end{theorem}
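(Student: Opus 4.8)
The plan is to adapt the coupling approach of \cite{denisov_wachtel15,denisov_wachtel19}, using a Brownian motion in $K$ as the reference process and exploiting the product structure (P4) of $h$ to keep the moment bookkeeping tight. Write $\tau^{\mathrm{bm}}_y$ for the exit time from $K$ of a Brownian motion started at $y$. Since $h$ is a positive homogeneous harmonic polynomial of degree $p$ vanishing on $\partial K$ (assumption (P3)) and $K$ is convex, $h$ is a multiple of the ground-state exit eigenfunction of $K$, and the classical results of \cite{banuelos_smits,deblassie} provide: $\P_y(\tau^{\mathrm{bm}}_y>t)\sim \varkappa_0\,h(y)\,t^{-p/2}$ for fixed $y$; the uniform majorant $\P_y(\tau^{\mathrm{bm}}_y>t)\le C\,h(y+\theta x_0)\,t^{-p/2}$ after a bounded shift $\theta$, with extra Gaussian decay when $|y|\gg\sqrt t$; and convergence of $(y+B(t))/\sqrt t$, conditionally on $\{\tau^{\mathrm{bm}}_y>t\}$, to the law on $K$ with density proportional to $h(z)e^{-|z|^2/2}$. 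The constant $\varkappa$ in (ii) will be $\varkappa_0$ and $c$ in (iii) will be $\bigl(\int_K h(z)e^{-|z|^2/2}\,dz\bigr)^{-1}$.

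The second ingredient compensates for the failure of $h$ to be harmonic for the free walk. Expanding the polynomial exactly, $\E[h(x+S(1))]-h(x)=\sum_{|\beta|\ge3}\tfrac{\mu_\beta}{\beta!}\,\partial^\beta h(x)$ with $\mu_\beta=\prod_{i=1}^d\E[X^{\beta_i}]$, the terms with $|\beta|=1$ vanishing since $\E X=0$ and those with $|\beta|=2$ summing to $\tfrac12\Delta h\equiv0$. By (P4), $\partial^\beta h\not\equiv0$ forces $\beta_j$ to be at most the degree in which $x_j$ occurs in $h$, hence $\beta_j\le r$, so every $\mu_\beta$ appearing here is finite under (P2). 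Iterating, one builds a polynomial corrector $g(x,m)=h(x)+\sum_{k\ge1}\tfrac{m^{\underline{k}}}{k!}\,q_k(x)$ with $\deg q_k\le p-3k$ (a finite sum) such that $\bigl(g(x+S(n),N-n)\bigr)_{0\le n\le N}$ is a martingale and $|g(x,m)-h(x)|\le C\,m^{(p-1)/2}$ whenever $|x|\le A\sqrt m$. Optional stopping at $N\wedge\tau_x$ gives
\[
\E[h(x+S(N));\tau_x>N]=g(x,N)-\E[g(x+S(\tau_x),N-\tau_x);\tau_x\le N],
\]
and the overshoot term is estimated as in the proof of Theorem~\ref{thm:harmonic.function-poly}, using (P2) to absorb the degree-$p$ growth of $g$ against the tail of $X$; this already yields the one-step bound $\E[h(x+S(m));\tau_x>m]\le C\,h(x+\theta x_0)$ uniformly in $m$, which is the workhorse for part~(i).

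For part (i) one runs the dyadic induction of \cite{denisov_wachtel15}. For starting points with $\delta(x)\ge\varepsilon\sqrt n$ one couples $S$ with a Brownian motion $B$ on $[0,n]$: on the event $\{\sup_{k\le n}|S(k)-B(k)|\le n^{1/2-\gamma}\}$, survival of the walk forces survival of $B$ in the cone $\{y:\delta(y)>n^{1/2-\gamma}\}$, so the Brownian majorant gives $\P(\tau_x>n)\le C\,h(x+\theta x_0)\,n^{-p/2}$; the complementary event has probability $o(n^{-p/2})$, and the polynomial factor coming from $h$ there is absorbed using $\E|X|^r<\infty$ (and the $\log$-moment when $r=2$). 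A general starting point is reduced to this case by running for $\lfloor\varepsilon n\rfloor$ steps, applying the one-step estimate and the strong Markov property, and splitting off the atypical event that $x+S(\lfloor\varepsilon n\rfloor)$ lies within $o(\sqrt n)$ of $\partial K$; the case $|x|\gtrsim\sqrt n$ is handled directly by the Gaussian part of the Brownian majorant. The matching lower bound $\P(\tau_x>n)\ge c\,h(x)\,n^{-p/2}$, needed for (ii), comes from forcing the walk into a ball of radius $\eta\sqrt n$ around $\sqrt n\,x_0$ within $\lfloor\varepsilon n\rfloor$ steps while staying in $K$—whose probability is $\ge c\,h(x)\,n^{-p/2}$ by a barrier argument together with a local central limit theorem for the killed walk—and then invoking the Brownian lower bound on the remaining time.

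Parts (ii) and (iii) follow by the same decomposition. Fix $\varepsilon>0$, put $m=\lfloor\varepsilon n\rfloor$, and write $\P(\tau_x>n)=\E[\P_{x+S(m)}(\tau>n-m);\tau_x>m]$; on $\{\tau_x>m\}$ the point $y=x+S(m)$ has $|y|$ of order $\sqrt n$ and, outside an event of vanishing conditional probability, $\delta(y)\ge\varepsilon'\sqrt n$, and for such $y$ the coupling on $[m,n]$ upgrades part~(i) to $\P_y(\tau>n-m)\sim\varkappa_0\,h(y)\,((1-\varepsilon)n)^{-p/2}$, using also the remark after Theorem~\ref{thm:harmonic.function-poly} that $V(y)\sim h(y)$ as $\delta(y)\to\infty$. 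Hence $\P(\tau_x>n)\sim\varkappa_0\,((1-\varepsilon)n)^{-p/2}\,\E[h(x+S(m));\tau_x>m]$, and letting $n\to\infty$ (so $\E[h(x+S(m));\tau_x>m]\to V(x)$ by Theorem~\ref{thm:harmonic.function-poly}) and then $\varepsilon\to0$ gives (ii) with $\varkappa=\varkappa_0$, uniformly for $|x|=o(\sqrt n)$. For (iii) one carries the indicator $\{(x+S(n))/\sqrt n\in D\}$ through the same steps: on $[m,n]$ the coupled Brownian motion conditioned to survive has rescaled endpoint converging to the density proportional to $h(z)e^{-|z|^2/2}$, a local central limit theorem transfers this to $S$, and dividing the resulting asymptotics by (ii) identifies $c$. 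The main obstacle throughout is the overshoot/moment interplay—showing that on the bad events (boundary overshoot at $\tau_x$ and coupling error) the degree-$p$ growth of $h(x+S(\cdot))$ contributes only $o(n^{-p/2})$ times the right quantities using merely $\E|X|^r$ (respectively $\E[|X|^2\log(1+|X|)]$ when $r=2$); the product form (P4), via the fact that differentiating $h$ more than $r$ times in a single coordinate annihilates it, is exactly what makes the minimal moment hypothesis suffice, but turning this into uniform estimates—especially in the borderline case $r=2$—is the technical core, and it is also the step that removes the symmetry assumption of \cite{KonigWolfgang2010Rwct}.
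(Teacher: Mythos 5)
Your overall architecture (splitting at an intermediate time $m$, Brownian asymptotics for interior starting points, separate estimates near the boundary and in the far region, and identification of the limit via $\E[h(x+S(m));\tau_x>m]\to V(x)$) matches the paper's, and you correctly identify that the vanishing of $\partial^\beta h$ whenever some $\beta_j>r$ is what makes the minimal moment condition work in the far-region and overshoot estimates. The genuine gap is in your ``second ingredient''. The polynomial corrector $g(x,m)=h(x)+\sum_{k\ge1}\frac{m^{\underline{k}}}{k!}q_k(x)$ compensates the non-harmonicity of $h$ for the \emph{free} walk, and the recursion forces $q_1=\sum_{|\beta|\ge3}\frac{\mu_\beta}{\beta!}\partial^\beta h$, which is generically nonzero precisely because you are not assuming the symmetry condition of \cite{KonigWolfgang2010Rwct}. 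Consequently $g(x,N)-h(x)\sim N\,q_1(x)$ diverges for fixed $x$, and in your optional-stopping identity both $g(x,N)$ and $\E[g(x+S(\tau_x),N-\tau_x);\tau_x\le N]$ blow up (like $N^{\lfloor p/3\rfloor}$ in the worst case) while the left-hand side stays bounded. Extracting the uniform bound $\E[h(x+S(m));\tau_x>m]\le C\,h(x+\theta x_0)$ --- which you then use as the workhorse for part (i) and for passing to the limit in (ii)--(iii) --- therefore requires proving exact cancellation of all divergent orders between these two terms, and nothing in the proposal addresses this. Moreover your bound $|g(x,m)-h(x)|\le Cm^{(p-1)/2}$ is only claimed for $|x|\le A\sqrt m$, whereas the overshoot position $x+S(\tau_x)$ need not satisfy this, so the overshoot term cannot simply be ``estimated as in the proof of Theorem~\ref{thm:harmonic.function-poly}''.

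The paper sidesteps this entirely: it compensates along the \emph{killed} path, showing that $h_+(x+Rx_0+S(n\wedge\tau_x))-\sum_{k<n\wedge\tau_x}f(x+Rx_0+S(k))$ is a martingale with $f(x)=\E[h_+(x+X)]-h(x)=o(\beta(x))$, and the whole point of the Green-function construction $U_\beta(y)=\int_K G(x,y)\beta(x)\,dx$ and of the superharmonic function $V_\beta=h(\cdot+Rx_0)+3U_\beta(\cdot+Rx_0)$ is to prove $\E\left[\sum_{k<\tau_x}\beta(x+Rx_0+S(k))\right]\le 2V_\beta(x)<\infty$ (Lemma~\ref{sum.beta}); the accumulated compensation is summable because it is only accumulated while the walk survives. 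That occupation-time bound is what delivers both the finiteness of $V$ and the supermartingale used in the iteration $\P(\tau_x>n)\le CV_\beta(x)(\varepsilon\sqrt n)^{-p}+C\varepsilon^q\P(\tau_x>n/2)$ proving part (i) --- no coupling is needed there, which also matters under the borderline assumption $\E[|X|^2\log(1+|X|)]<\infty$, where KMT-type coupling rates are too weak to run your dyadic-induction upper bound directly. If you wish to keep your route, you must either establish the cancellation of the divergent terms directly or replace the free-walk corrector by the killed-walk compensation and supply the analogue of Lemma~\ref{sum.beta}.
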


The constants $\varkappa, c$ depend on $K$ but do not depend on the increment distribution in the random walk.

\begin{remark}
To illustrate why $r$ moments are required, we can consider two homogeneous harmonic polynomials of degree 4, 
\begin{align*}
    \phi_1(x, y) = xy(x^2 - y^2), \qquad \phi_2(x, y) = x^4 - 6x^2y^2 + y^4.
\end{align*}
Let $K_1$ and $K_2$ be arbitrarily chosen connected components of $\phi_1(x, y) > 0$ and 
$\phi_2(x, y) > 0$ respectively.
Theorem~\ref{thm.p.tau-poly} establishes in both settings that 
$\P(\tau_x > n) \sim \varkappa V(x)n^{-2}$ and the cones are related by a rotation. However, for $K_1$ the moment condition required on the random walk increments is $\E[X^3] < \infty$ while for $K_2$ the moment condition is
$\E[X^4] < \infty.$ These conditions are optimal; the reason for this difference can be seen from the 
one-big-jump-principle, in the case of $K_2$ one big jump can take the random walk into the interior of the cone, whereas, with $K_1$ two big jumps are required. This comes from the fact that $K_1$ has boundaries along the co-ordinate axes while $K_2$ does not.
\end{remark}


The present paper relies on the methodology 
developed in~\cite{denisov_wachtel21}, in particular that we can combine estimates on the Green function of Brownian motion with estimates of the error in the diffusive approximation. 
There are two main technical obstacles in extending this methodology. Firstly, \cite{denisov_wachtel21} requires  
$C^{1, \alpha}$ smoothness of the cones and an extra condition that  
\begin{equation}\label{eq:harmonic.boundary.lower}
           h(x) \ge  C|x|^{p-1}\delta(x),\quad x\in K. 
\end{equation}  
This is used in a number of places, for example to establish estimates on the Green's function. Secondly, we assume weaker moment conditions than in \cite{denisov_wachtel21} which means we need to establish more precise estimates by using the specific structure of the harmonic polynomial.

The rest of the paper is structured as follows. In Section \ref{sec:preliminary_estimates} we establish a number of preliminary estimates on the Green's function of killed Brownian motion and the error arising from taking the expectation of the harmonic function for Brownian motion after one step of the killed random walk. In Section \ref{sec:superharmonic} we construct a superharmonic function.
In Section~\ref{sec:harmonic} we construct the harmonic function and prove Theorem~\ref{thm:harmonic.function-poly}. In Section \ref{sec:tail_asy_bound} we prove upper bounds for the tail asymptotics. In Section~\ref{sec:tail_asymptotics} we prove Theorem~\ref{thm.p.tau-poly}. 


\section{Preliminary estimates} 
\label{sec:preliminary_estimates}
In this section, we will derive the bound of error in the diffusion approximation of harmonic function of $(x+S(n))_{n \geq 0}$ with harmonic function $h$ of Brownian motion. We then prove an upper bound on the Green's function of Brownian motion.

We start with the following estimate for harmonic functions. 

\begin{lemma}
\label{lem:harnack}
There exists a constant $C$ such that 
\begin{equation}\label{eq:harnack}
\left| \frac{\partial^k h(x)}{\partial x_1^{\alpha_1}\cdots\partial x_d^{\alpha_d}}\right| \le C\frac{h(x)}{\delta(x)^k}, \quad x\in K,
\end{equation}
where $k=\alpha_1+\cdots+\alpha_d$.
\end{lemma}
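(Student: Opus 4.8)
The plan is to exploit the product structure $h(x) = \prod_{i=1}^p \langle x, \alpha_i\rangle$ from assumption (P4) together with the fact, noted after \eqref{cone.polynomial}, that each linear form $\langle x, \alpha_i\rangle$ is (up to scaling $\alpha_i$) one of the defining inequalities of the convex cone $K$, hence comparable to the distance to the corresponding face. First I would normalise so that $|\alpha_i| = 1$ for all $i$; then for $x \in K$ we have $\langle x, \alpha_i\rangle = \operatorname{dist}(x, \{y : \langle y,\alpha_i\rangle = 0\}) \ge \delta(x)$, since $\delta(x) = \operatorname{dist}(x,\partial K)$ is the minimum of the distances to all the bounding hyperplanes. (Some $\alpha_i$ may be repeated or some hyperplanes may not actually touch $\partial K$, but in all cases $\langle x,\alpha_i\rangle \ge c\,\delta(x)$ for a fixed $c>0$, which is all that is needed; for the Weyl chambers one checks this directly.)

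Next I would differentiate the product. Any partial derivative of order $k$ of $h$ is, by the Leibniz rule, a sum of terms each of which is a product of $k$ of the linear forms $\langle x,\alpha_i\rangle$ being differentiated (each contributing a constant, a coordinate of $\alpha_i$, bounded by $1$) times the remaining $p-k$ linear forms left intact. Concretely,
\begin{equation*}
\frac{\partial^k h(x)}{\partial x_1^{\alpha_1}\cdots \partial x_d^{\alpha_d}}
= \sum_{\substack{I \subset \{1,\dots,p\}\\ |I| = k}} c_I \prod_{i \notin I} \langle x, \alpha_i\rangle,
\end{equation*}
where each $|c_I|$ is bounded by a constant depending only on the $\alpha_i$ (a product of $k$ coordinates of vectors of norm $1$, with multinomial factors). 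The number of such terms is $\binom{p}{k} \le 2^p$, again a constant. Therefore
\begin{equation*}
\left|\frac{\partial^k h(x)}{\partial x_1^{\alpha_1}\cdots \partial x_d^{\alpha_d}}\right|
\le C_1 \sum_{|I| = k} \prod_{i \notin I} \langle x, \alpha_i\rangle.
\end{equation*}
For each fixed $I$ of size $k$, I would multiply and divide by $\prod_{i \in I} \langle x, \alpha_i\rangle$ to recover the full product $h(x)$:
\begin{equation*}
\prod_{i \notin I} \langle x, \alpha_i\rangle = \frac{h(x)}{\prod_{i \in I}\langle x, \alpha_i\rangle} \le \frac{h(x)}{c^k\,\delta(x)^k},
\end{equation*}
using $\langle x,\alpha_i\rangle \ge c\,\delta(x)$ for each $i \in I$. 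Summing over the at most $2^p$ choices of $I$ gives \eqref{eq:harnack} with $C = C_1 2^p c^{-k}$, which can be absorbed into a single constant since $k$ ranges over a bounded set (a partial derivative of order $> p$ of $h$ vanishes, so it suffices to consider $k \le p$).

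The only genuinely delicate point — and the one I would be most careful about — is the lower bound $\langle x, \alpha_i\rangle \ge c\,\delta(x)$ on $K$. This is immediate when every hyperplane $\{\langle y,\alpha_i\rangle = 0\}$ supports $\partial K$ along a facet of positive dimension (then $\delta(x)$ is literally the min of these linear forms, so $c = 1$), which is the case for the Weyl chambers of types $A$, $C$, $D$ and is the situation the paper cares about; in the abstract setting one should either add this as part of (P4) or argue that redundant factors still satisfy $\langle x,\alpha_i\rangle \ge c\,\delta(x)$ by a compactness argument on $\Sigma = K \cap \{|x|=1\}$, since $\langle \cdot, \alpha_i\rangle / \delta(\cdot)$ is positive and continuous on the compact set $\overline{\Sigma}$ away from $\partial K$, and near a boundary facet it is bounded below because $\langle x,\alpha_i\rangle$ and $\delta(x)$ both vanish linearly there with comparable rates. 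Everything else is the routine Leibniz-rule bookkeeping sketched above.
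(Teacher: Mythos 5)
Your proof is correct, but it is not the route the paper takes: the paper's proof of Lemma~\ref{lem:harnack} simply invokes the Harnack-inequality argument of Lemma~2.1 in \cite{denisov_wachtel19} (proved there for $k\le 3$) and asserts the same proof extends to $k\le p$, mentioning only in passing that one could ``alternatively use the explicit representation for $h$ and perform direct computations.'' You have carried out precisely that alternative: Leibniz on $h(x)=\prod_{i=1}^p\langle x,\alpha_i\rangle$, each differentiated factor contributing a bounded constant and each surviving factor being divided out via $\langle x,\alpha_i\rangle\ge c\,\delta(x)$. The trade-off is that the paper's argument works for any positive harmonic function vanishing on $\partial K$ (at the cost of an external citation whose extension to $k\le p$ is only asserted), whereas yours is elementary, self-contained and gives explicit constants, but genuinely uses assumption (P4). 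One simplification to your ``delicate point'': after normalising $|\alpha_i|=1$, the bound holds with $c=1$ for \emph{every} $i$, redundant or not. Indeed each hyperplane $H_i=\{y:\langle y,\alpha_i\rangle=0\}$ is disjoint from $K$, while the open ball $B(x,\delta(x))$ is contained in $K$; hence $\langle x,\alpha_i\rangle=\mathrm{dist}(x,H_i)\ge\delta(x)$ directly, and no compactness argument on $\Sigma$ or extra hypothesis in (P4) is needed. With that observation your argument closes completely (and, as you note, $k>p$ is trivial since the derivatives vanish).
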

\begin{proof}
    This lemma follows from the Harnack inequality. 
    It is is essentially Lemma~2.1 from~\cite{denisov_wachtel19}, where it was proved for $k\le 3$. 
    It is immediate that 
    the same proof works for $k\le p$. Alternatively, one can use the explicit  representation for $h(x)$ and perform direct computations. Note that for $k>p$ the partial derivatives vanish, so the statement is trivially correct in this case.  
\end{proof}

We will also need the following. 
\begin{lemma}\label{lem:construction.gamma}
Let the Assumption~\ref{ass-M2-h} hold. Then, there exists a slowly varying, monotone decreasing differentiable function $\gamma(t)$ such that  
    \begin{equation}\label{eq:gamma.integral.finite}
        \E[|X|^{r};|X|>t]=o(\gamma(t)t^{r-2}),\quad t\to\infty, 
    \end{equation} 
    and 
    \begin{equation}    
        \label{eq:gamma.integral.finite.3}
        \int_1^\infty x^{-1}\gamma(x) dx<\infty.
    \end{equation}
\end{lemma}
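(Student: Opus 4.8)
The plan is to reduce the assertion to a deterministic regularization fact. Set
\[
  f(t):=\E[|X|^{r};|X|>t]\,t^{-(r-2)},\qquad t\ge 1,
\]
with the convention $t^{0}:=1$ when $r=2$; then the bound $\E[|X|^{r};|X|>t]=o(\gamma(t)t^{r-2})$ is precisely $f(t)=o(\gamma(t))$. The function $f$ is non-increasing, being a product of the non-increasing tail expectation and the non-increasing power $t^{-(r-2)}$, and $f(t)\to 0$ by dominated convergence, using $\E|X|^{r}<\infty$ (which holds for $r>2$ by Assumption~\ref{ass-M2-h} and for $r=2$ because $\E X^{2}=1$). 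I would then (i) check that $\int_{1}^{\infty}f(t)t^{-1}\,dt<\infty$, and (ii) construct $\gamma$ from $f$.

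For (i), Tonelli's theorem gives
\[
  \int_{1}^{\infty}\frac{f(t)}{t}\,dt
  =\int_{1}^{\infty}\frac{\E[|X|^{r};|X|>t]}{t^{r-1}}\,dt
  =\E\left[|X|^{r}\int_{1}^{\max(|X|,1)}\frac{dt}{t^{r-1}}\right].
\]
When $r>2$ the inner integral is at most $(r-2)^{-1}$, so the right-hand side is at most $(r-2)^{-1}\E|X|^{r}<\infty$. When $r=2$ the inner integral equals $\log|X|$ on $\{|X|>1\}$ and $0$ otherwise, hence is at most $\log(1+|X|)$, so the right-hand side is at most $\E[X^{2}\log(1+|X|)]<\infty$ by Assumption~\ref{ass-M2-h}.

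For (ii), the case $r>2$ is immediate: since $f(t)\le \E|X|^{r}\,t^{-(r-2)}$ decays at least polynomially, one may take $\gamma(t):=(\log(e+t))^{-2}$, which is positive, $C^{1}$, decreasing, slowly varying, has $\int_{1}^{\infty}\gamma(t)t^{-1}\,dt<\infty$, and satisfies $f(t)/\gamma(t)\le \E|X|^{r}\,t^{-(r-2)}(\log(e+t))^{2}\to 0$. The case $r=2$ is the substantive one, since no universal $\gamma$ can work: the tail $g(t):=\E[X^{2};|X|>t]$ can decay arbitrarily slowly subject only to $\int_{1}^{\infty}g(t)t^{-1}\,dt<\infty$, and for each fixed slowly varying $\gamma$ with finite logarithmic integral there is such a $g$ with $g\ne o(\gamma)$. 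Here one invokes the regularization lemma: if $g\ge 0$ is non-increasing on $[1,\infty)$ with $g(t)\to 0$ and $\int_{1}^{\infty}g(t)t^{-1}\,dt<\infty$, then there is a positive, $C^{1}$, decreasing, slowly varying $\gamma$ with $g(t)=o(\gamma(t))$ and $\int_{1}^{\infty}\gamma(t)t^{-1}\,dt<\infty$. (If $X$ is bounded, $g$ vanishes eventually and $\gamma(t)=(\log(e+t))^{-2}$ works, so one may assume $g>0$.) After substituting $u=\log t$ and $\phi(u):=g(e^{u})$, so that $\phi$ is non-increasing on $[0,\infty)$ with $\phi\to 0$ and $\int_{0}^{\infty}\phi<\infty$, one seeks $\eta$ positive, $C^{1}$, decreasing, with $\eta'(u)/\eta(u)\to 0$ (which makes $\gamma(t)=\eta(\log t)$ slowly varying), $\int_{0}^{\infty}\eta<\infty$, and $\phi=o(\eta)$. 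The engine is the device of dividing by the square root of the remaining integral: with $R(u):=\int_{u}^{\infty}\phi$ and $\eta_{0}:=\phi/\sqrt{R}$ one has $R\downarrow 0$ and, since $R'=-\phi$,
\[
  \int_{0}^{\infty}\eta_{0}(u)\,du=\int_{0}^{\infty}\frac{\phi(u)}{\sqrt{R(u)}}\,du=2\sqrt{R(0)}<\infty
  \qquad\text{and}\qquad \frac{\phi(u)}{\eta_{0}(u)}=\sqrt{R(u)}\to 0,
\]
so $\eta_{0}$ has finite integral and dominates $\phi$ in the $o(\cdot)$ sense. It then remains to pass from the majorant $\eta_{0}$ to one that is in addition monotone, of class $C^{1}$, and genuinely slowly varying; this is a standard regularization (smoothing over a window of slowly growing length to force the logarithmic derivative to $0$ and to permit monotonicity, mollifying for differentiability, with an interchange-of-order estimate showing the smoothing preserves a finite integral and the $o(\cdot)$-domination), in the spirit of the theory of regular variation and of the constructions in \cite{denisov_wachtel21}.

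The step I expect to be the main obstacle is exactly this last one in the case $r=2$: exhibiting a single $\gamma$ that is \emph{simultaneously} slowly varying, monotone, differentiable, logarithmically integrable, and a strict majorant of $g$. The tension is structural: $\gamma$ must not decay too fast (it has to dominate $g$ and track its local magnitude) yet not too slowly (it has to be logarithmically integrable) and yet smoothly (it cannot follow the fluctuations of $g$). The square-root device reconciles domination with integrability, the slowly-growing-window smoothing reconciles monotonicity with slow variation, and the bookkeeping in this smoothing carries essentially all of the technical weight of the lemma.
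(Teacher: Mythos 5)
Your proposal is correct and takes essentially the same route as the paper: the paper proves this lemma by citing Lemma~10 of \cite{denisov_wachtel21}, whose only subtlety is the case $r=2$, where it invokes exactly the deterministic fact you reduce to, namely the existence of an integrable, monotone, slowly varying (after the substitution $u=\log t$) majorant of a monotone integrable function \cite{denisov06}. Your reduction (monotonicity of $f$, the Tonelli computation showing $\int_1^\infty f(t)t^{-1}\,dt<\infty$ under (P2), the explicit $\gamma$ for $r>2$, and the $\phi/\sqrt{R}$ integrability device) all check out, and the one step you leave as ``standard regularization'' --- upgrading $\phi/\sqrt{R}$ to a monotone, $C^1$, slowly varying majorant, which is genuinely delicate since the least monotone majorant of $\phi/\sqrt{R}$ need not be integrable --- is precisely the content of the result the paper itself defers to.
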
    
\begin{proof}
The proof can be found in~\cite[Lemma 10]{denisov_wachtel21}. 
The main subtlety is the case $r=2$, when the proof follows  from the existence of an integrable regularly varying majorant for a  monotone integrable function~\cite{denisov06}. 
\end{proof}

\subsection{Estimate for the error term and the Green function of Brownian motion}
For $x\in K$  let  
\begin{equation}\label{eq:defn.f}
f(x) := \E\left[ h(x+X), x+X\in K\right] -h(x). 
\end{equation}
\begin{lemma}\label{lem:bound.f}
  Let $f$ be defined by~\eqref{eq:defn.f} and let Assumption \ref{ass-m-h} hold.  Then, 
\begin{equation*} 
    |f(x)|=o( \beta(x)),\quad \text{ as }\delta(x)\to\infty,   
  \end{equation*}
where 
\begin{equation}\label{defn.beta}
  \beta(x):= h(x)\int^{\infty}_{\delta(x)} \frac{\gamma(t)}{t^3} dt
\end{equation}
and function $\gamma(x)$ is a slowly varying, monotone decreasing differentiable function as 
in~Lemma~\ref{lem:construction.gamma}. 
\end{lemma}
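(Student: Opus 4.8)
The plan is to perform a Taylor expansion of $h$ around $x$, using that $h$ is a polynomial of degree $p$, and then split the expectation over $X$ according to whether $|X|$ is small or large compared to $\delta(x)$. Write $h(x+y) = \sum_{|\alpha|\le p} \frac{1}{\alpha!} D^\alpha h(x)\, y^\alpha$ exactly (the remainder vanishes since $h$ is polynomial). Substituting $y=X$ and taking expectations, the terms with $|\alpha|=0$ cancel against $-h(x)$, the terms with $|\alpha|=1$ vanish by $\E[X]=0$, and the terms with $|\alpha|=2$ would vanish by harmonicity $\Delta h = 0$ together with $\E[X_iX_j] = \mathbf{1}\{i=j\}$ \emph{were it not} for the indicator $\{x+X\in K\}$. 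So the strategy is to write
\[
f(x) = \E\!\left[\bigl(h(x+X)-h(x)\bigr)\mathbf 1\{x+X\in K\}\right] - \E\!\left[\bigl(h(x+X)-h(x)\bigr)\mathbf 1\{x+X\notin K\}\right],
\]
combine the first expectation's full-space version (no indicator) — which by the above equals $\tfrac12\sum_{i} \partial_i^2 h(x)\E[X_i^2] + (\text{higher order}) = (\text{terms of order }\ge 3)$ after using $\Delta h=0$ — and control the correction coming from $\{x+X\notin K\}$ separately.

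Concretely I would bound the contribution of each Taylor coefficient using Lemma~\ref{lem:harnack}: $|D^\alpha h(x)| \le C h(x)/\delta(x)^{|\alpha|}$. For $3\le |\alpha|\le p$ the term $\frac{1}{\alpha!}D^\alpha h(x)\E[X^\alpha]$ is bounded by $C h(x)\delta(x)^{-|\alpha|}\E[|X|^{|\alpha|}]$. When $r>2$ (so $p$-many moments up to $r$ are available and $r\ge 3$) one splits $\E[|X|^{|\alpha|}] = \E[|X|^{|\alpha|};|X|\le \delta(x)] + \E[|X|^{|\alpha|};|X|>\delta(x)]$; on the small-jump part one uses $|X|^{|\alpha|}\le |X|^3 \delta(x)^{|\alpha|-3}$ for $|\alpha|\ge 3$ to reduce everything to a third-moment-type quantity, and on the large-jump part one uses $|X|^{|\alpha|}\le |X|^{r}\delta(x)^{|\alpha|-r}$ together with Lemma~\ref{lem:construction.gamma}, namely $\E[|X|^r;|X|>t] = o(\gamma(t)t^{r-2})$. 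In both cases the bound collapses to $o\bigl(h(x)\,\gamma(\delta(x))/\delta(x)\bigr)$ up to the integral representation; one then checks $h(x)\gamma(\delta(x))/\delta(x) \asymp h(x)\int_{\delta(x)}^\infty \gamma(t)t^{-3}\,dt \cdot \delta(x)$ — more precisely, since $\gamma$ is slowly varying and decreasing, $\int_{\delta(x)}^\infty \gamma(t)t^{-3}dt \sim \tfrac12 \gamma(\delta(x))\delta(x)^{-2}$, so $\beta(x)\asymp h(x)\gamma(\delta(x))\delta(x)^{-2}$, and one must be careful that the third-moment contributions really are $o(\gamma(\delta(x))\delta(x)^{-2})$ rather than merely $O(\delta(x)^{-1})$; this is where $\E[|X|^2\log(1+|X|)]<\infty$ (the $r=2$ borderline case) and the defining property of $\gamma$ enter, since $\E[|X|^3;|X|\le t]$ grows like $o(\gamma(t)t)$ by a standard truncation argument from the moment assumption.

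The remaining and genuinely delicate piece is the term $-\E[(h(x+X)-h(x))\mathbf 1\{x+X\notin K\}]$ together with the ``missing'' second-order contribution $-\tfrac12\sum_i \partial_i^2 h(x)\E[X_i^2;x+X\notin K]$ that was subtracted when we replaced the indicated first expectation by its full-space counterpart. For $x+X\notin K$ one has $|X|\ge \delta(x)$, so this is a genuine large-jump event; using Lemma~\ref{lem:harnack} to bound $|h(x+X)-h(x)|\le \sum_{1\le|\alpha|\le p}\frac{1}{\alpha!}|D^\alpha h(x)||X|^{|\alpha|}\le C h(x)\sum_{k=1}^{p}(|X|/\delta(x))^k$ and then restricting to $\{|X|>\delta(x)\}$ where each such power is dominated by $(|X|/\delta(x))^r$, one reduces to $C h(x)\delta(x)^{-r}\E[|X|^r;|X|>\delta(x)] = o(h(x)\gamma(\delta(x))\delta(x)^{-2}) = o(\beta(x))$ by Lemma~\ref{lem:construction.gamma}. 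Here the hypothesis~(P4), or at least the structure of the cone, may be needed to control the $|\alpha|=1$ part of $h(x+X)-h(x)$ on $\{x+X\notin K\}$: the linear term $\langle \nabla h(x),X\rangle$ does not have a uniform sign, so one cannot simply drop it, but by Lemma~\ref{lem:harnack} with $k=1$ it is $O(h(x)|X|/\delta(x))$, and on the large-jump set this is again absorbed into the $(|X|/\delta(x))^r$ bound. I expect this large-jump / boundary-crossing estimate to be the main obstacle, precisely because it is the step where the moment assumption~(P2) is sharp — it is the analogue of the one-big-jump phenomenon highlighted in the remark after Theorem~\ref{thm.p.tau-poly} — and where one must resist the temptation to throw away too much and lose the slowly-varying gain encoded in $\gamma$.
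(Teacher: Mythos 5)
Your overall architecture matches the paper's: decompose $f$ into the full-space Taylor term $\E[h(x+X)-h(x)]$ (where the orders $0,1,2$ cancel by $\E[X]=0$, $\mathrm{cov}(X_i,X_j)=\delta_{ij}$ and $\Delta h=0$) plus a boundary-crossing correction supported on $\{|X|\ge\delta(x)\}$, and control both via Lemma~\ref{lem:harnack} and the function $\gamma$ of Lemma~\ref{lem:construction.gamma}. (As an aside, your displayed splitting of $f(x)$ is not an identity; the correct one is $f(x)=\E[h(x+X)-h(x)]-\E[h(x+X);\,x+X\notin K]$, but this does not affect the strategy you then describe.)

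The genuine gap is in how you estimate the moments, and it sits exactly at the point that makes the lemma nontrivial. You bound $|\E[X^\alpha]|$ by $\E[|X|^{|\alpha|}]$ with $|\alpha|$ running up to $p$, and on the large-jump set you claim $(|X|/\delta(x))^k\le(|X|/\delta(x))^r$; the latter is false for $k>r$ because $|X|/\delta(x)>1$ there, and both steps require moments of order up to $p$, which Assumption (P2) does not provide (for the type $A$ chamber, $r=d-1$ while $p=d(d-1)/2$). The missing ingredients are: (a) the independence of the coordinates, which lets you factor $\E[X_1^{\alpha_1}\cdots X_d^{\alpha_d}]=\prod_j\E[X_j^{\alpha_j}]$ in the full-space term and, after replacing $\{x+X\notin K\}$ by the union of $J_i=\{|X_i|>\delta(x)/\sqrt d\}$, isolate a single truncated scalar moment $\E[X_i^{\alpha_i};J_i]$; and (b) the observation that $\partial^\alpha h\equiv 0$ whenever some $\alpha_j>r$, by the very definition of $r$ as the maximal degree of $h$ in a single variable, so only scalar moments of order at most $r$ ever appear. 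With these, the bound $|\E[X_i^{\alpha_i};J_i]|\le \E[|X_i|^{r};J_i]\,\delta(x)^{-(r-\alpha_i)}$ is legitimate (now $\alpha_i\le r$) and Lemma~\ref{lem:construction.gamma} delivers the $o(\gamma(\delta(x))/\delta(x)^{2})$ gain. Your remaining concerns are misplaced: the $|\alpha|=1$ part needs nothing from (P4), and the order-$\ge 3$ full-space terms are simply $O(h(x)/\delta(x)^{3})=o(\beta(x))$ because $t\gamma(t)\to\infty$ for slowly varying $\gamma$ and $\beta(x)\asymp h(x)\gamma(\delta(x))\delta(x)^{-2}$ by Karamata's theorem.
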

\begin{proof}
Applying the triangle inequality to \eqref{eq:defn.f} we obtain 
\begin{equation}\label{eq:f}
   |f(x)|\le g_1(x)+g_2(x), 
\end{equation}
where 
\begin{align*} 
    g_1(x)&:=\left|\E\left[h(x+X)-h(x)\right]\right|  \\
    g_2(x)&:=\left|\E\left[h(x+X), x+X \notin K\right]\right|. 
\end{align*}
For the event $\{x+X \notin K\}$ to happen the random variable should satisfy $|X|\ge\delta(x)$.  
This in turn implies that $|X_i|\ge \frac{\delta(x)}{\sqrt {d}}$ at least for one of $i\in \{1,\ldots,d\}$ and  
the following inclusion is valid 
\[
\left\{x+X \notin K\right\} \subseteq \bigcup_{i=1}^d \left\{|X_i| >\frac{\delta(x)}{\sqrt{d}}\right\}. 
\]
Let $J_i:=\left\{|X_i| >\frac{\delta(x)}{\sqrt{d}}\right\}$ and note that  
\begin{align*}
    |g_2(x)| &\le \Bigg\lvert \E\left[h(x+X);  \bigcup_{i=1}^d \left\{|X_i| \ge \frac{\delta(x)}{\sqrt{d}}\right\} \right] \Bigg\rvert
    \\& \le \sum_{i=1}^{d} \big\lvert \E\left[h(x+X); J_i\right] \big\rvert. 
\end{align*}
Next, by the Taylor formula, 
\[ 
  h(x+X)=\sum_{k=0}^{p} \frac{1}{k!} \sum_{\alpha_1+\cdots+\alpha_d = k}\frac{\partial^k h(x)}{\partial x_1^{\alpha_1} \cdots \partial x_d^{\alpha_d}} X_1^{\alpha_1} \cdots X_d^{\alpha_d}. 
\] 
Note that when $k>p$, all  partial derivatives of $h(x)$ are equal to 0, so this formula is exact. 
Also note that the partial derivatives vanish if 
$\alpha_j>r$ for some $\alpha_j$, hence all the coefficients appearing below are finite by Assumption \ref{ass-m-h} .  
Then, for $i=1,\ldots, d$, 
\[
   \E[h(x+X); J_i]= 
   \sum_{k=0}^{p} \frac{1}{k!} \sum_{\alpha_1+\cdots+\alpha_d =  k}\frac{\partial^k h(x)}{\partial x_1^{\alpha_1} \cdots \partial x_d^{\alpha_d}} \prod_{j\neq i}\E[X_j^{\alpha_j}]  \E[X_i^{\alpha_i}; J_i]. 
\]
Next, for $\alpha_i\le r$ we have,  
\[
|\E[X_i^{\alpha_i}; J_i]| \le 
\frac{\E[|X_i|^{r}; J_i]}{\delta(x)^{r-\alpha_i}}. 
\]
Then, by~\eqref{eq:gamma.integral.finite} and Lemma~\ref{lem:harnack}, 
as $\delta(x)\to \infty$, 
\begin{align*}
\left|\E[h(x+X); J_i]\right|
    &\le 
    \sum_{k=0}^{p} \frac{C}{k!} \sum_{\alpha_1+\cdots+\alpha_d =  k, \alpha_1, \ldots, \alpha_d \leq r_z}
    \frac{ h(x)}{\delta(x)^k}
    \prod_{j\neq i}\E[|X_j|^{\alpha_j}]  
    \frac{\E[|X_i|^{r}; J_i]}{\delta(x)^{r-\alpha_i}}
    \\
    &\le  
    h(x) o\left(\frac{\gamma(\delta(x))}{\delta(x)^{2}}\right)
  +
    C \sum_{k=3}^{p} \sum_{\alpha_1+\cdots+\alpha_d =  k}\frac{ h(x)}{\delta(x)^{k+r-\alpha_i}}\\
   &\le 
   h(x) o\left(\frac{\gamma(\delta(x))}{\delta(x)^{2}}\right)
   +Ch^Z(x) \frac{1}{\delta(x)^3} \\
   & =o(\beta(x)),
\end{align*}
where we also used the fact that by the Karamata theorem, $\frac{\gamma(t)}{t^2}\sim \frac{1}{3}\int_t^\infty \frac{\gamma(u)}{u^3}du.$ 

Hence, 
\begin{equation}\label{eq.f.1}
    g_2(x)=o(\beta(x)). 
\end{equation}

Next, we estimate the term $g_1(x)$. By the Taylor formula,
\begin{multline*}
  \E\left[h(x+X)-h(x)\right] = \sum_{j=1}^d \frac{\partial h(x)}{\partial x_i} \E\left[X_j\right]+\frac{1}{2}\sum_{1\le i,j \le d}\frac{\partial^2 h(x)}{\partial x_i \partial x_j} \E\left[X_i X_j\right] \\+ \sum_{k=3}^{p} \frac{1}{k!} \sum_{\alpha_1+\cdots+\alpha_d = k}\frac{\partial^k h(x)}{\partial x_1^{\alpha_1} \cdots \partial x_d^{\alpha_d}} \E\left[X_1^{\alpha_1} \right] \cdots \E\left[X_d^{\alpha_d} \right]. 
\end{multline*}
Recall that $\E[X_j]=0$, $\mathrm{cov}(X_i,X_j)= \delta_{i=j}$ and $h(x)$ is harmonic, 
that is  $\sum_{j=1}^d \frac{\partial^2 h(x)}{\partial x_j^2} =0 $. Then, using Lemma~\ref{lem:harnack}, 
\begin{multline*}
  \left|\E\left[h(x+X)-h(x)\right]\right| \\
  \le C \sum_{k=3}^{p} \frac{1}{k!} \sum_{\alpha_1+\cdots+\alpha_d = k, \alpha_1, \ldots, \alpha_d \leq r_z} \frac{h(x)}{\delta(x)^k}
  \E\left[|X_1|^{\alpha_1} \right] \cdots \E\left[|X_d|^{\alpha_d} \right]
  \le C\frac{h(x)}{\delta(x)^3}.  
\end{multline*}
Since  a slowly varying function increases slower than 
any power function we obtain 
\begin{equation}\label{eq.f.2}
g_1(x) = o(\beta(x)).  
\end{equation}
Combining equations~\eqref{eq.f.1} and \eqref{eq.f.2} we arrive at the conclusion. 
\end{proof}
Next, we are planning to construct 
a suitable positive superharmonic function 
similarly  to~\cite{denisov_wachtel21}. 
For that we will make use of the following estimate of the the Green function $G(x,y)$ of the Green function of Brownian motion killed on exiting $K$. 
\begin{lemma}\label{lem:green.bound0} 
Let $K$ be of the form~\eqref{cone.polynomial}. Then 
there exists a constant $C$ such that 
  \begin{equation}
  \label{eq:green.function.bound0}
  G(x,y)  \le C \widehat G(x,y), 
  \end{equation} 
  where 
  \begin{equation}\label{eq:ghat}
  \widehat G(x,y)  := 
  \begin{cases}
  \frac{h(x)h(y)}{|y|^{2p+d -2}},& |x|\le |y|, |x-y|\ge \frac{1}{2}|y|,\\
   \frac{h(x)h(y)}{|x|^{2p+d -2}},& |y|\le |x|, |x-y|\ge \frac{1}{2}|y|,\\ 
   \frac{h(y)(\delta(x))^2}{h(x)|x-y|^{d}},
                                 &\delta(y)/2\le |x-y| \le \frac{1}{2}|y|,\\
                                 \frac{1}{|x-y|^{d-2}}I(d>2)
                                 +\ln\left(\frac{\delta(y)}{|x-y|}\right)I(d=2), 
                                 & |x-y|<\delta(y)/2.
  \end{cases}
  \end{equation}
\end{lemma}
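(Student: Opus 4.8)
The plan is to obtain the bound region-by-region, matching the four cases in \eqref{eq:ghat}, using standard estimates for the Green function of killed Brownian motion together with the boundary behaviour \eqref{eq:harmonic.boundary} of $h$. Recall first the general principle (going back to the boundary Harnack principle and used in this form in \cite{denisov_wachtel19,denisov_wachtel21}): for a Lipschitz cone $K$ with harmonic function $h$ vanishing on $\partial K$, the Green function $G_K(x,y)$ is comparable, up to multiplicative constants depending only on $K$, to the "product" form built from $h$ and the free Green function, localized according to the relative position of $x$, $y$ and the boundary. Concretely, I would start from the global two-sided bound $G_K(x,y) \asymp G_{B(x,y)}(x,y)\cdot \frac{h(x)}{h(x_{|x-y|})}\cdot\frac{h(y)}{h(y_{|x-y|})}$-type expressions when $x,y$ are far apart, and $G_K(x,y)\asymp G_{\mathbb R^d}(x,y)$ when $x,y$ are deep inside $K$ relative to $|x-y|$. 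This is where I will invoke that $K$ has the special product form \eqref{cone.polynomial}, so it is convex (hence Lipschitz and NTA), which guarantees the boundary Harnack principle applies.

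Next I would split into the four cases. In the last case $|x-y|<\delta(y)/2$ the balls $B(x,\delta(y)/4)$ and $B(y,\delta(y)/2)$ sit inside $K$, so $G_K(x,y)$ is bounded above by the whole-space Green function $c_d|x-y|^{2-d}$ for $d>2$, and by a logarithmic term controlled by $\ln(\delta(y)/|x-y|)$ for $d=2$ (using monotonicity of $G$ in the domain and the explicit planar Green function of a disc of radius $\sim\delta(y)$). In the third case $\delta(y)/2\le|x-y|\le\frac12|y|$ one is at moderate distance but possibly near the boundary on the $x$-side; here I would use that $|y|\asymp|x|$ and the interior estimate on the $y$-side (since $|x-y|\gtrsim\delta(y)$) together with the boundary-Harnack comparison $h(x)/h(\tilde x)\asymp \delta(x)/|x-y|$ where $\tilde x$ is a point at distance $\sim|x-y|$ from the boundary near $x$; combined with $h(x)\le C|x|^{p-1}\delta(x)$ from \eqref{eq:harmonic.boundary} and the scaling $h(\tilde x)\asymp |x-y|^{p-1}\cdot|x-y|$ (a point well inside at scale $|x-y|$, but at radius $\sim|x|$, so actually $h(\tilde x)\asymp |x|^{p-1}|x-y|$) this yields the claimed $\frac{h(y)\delta(x)^2}{h(x)|x-y|^d}$. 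The first two cases $|x-y|\ge\frac12|y|$ are the "long-range" regime: here both points are seen at the scale $\max(|x|,|y|)$, and the boundary Harnack principle on both sides gives $G_K(x,y)\asymp \frac{h(x)}{R^{p}}\cdot\frac{h(y)}{R^{p}}\cdot R^{2-d}$ with $R=\max(|x|,|y|)$, which is exactly $\frac{h(x)h(y)}{R^{2p+d-2}}$.

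The main obstacle I anticipate is making the boundary Harnack comparisons uniform over the whole cone, i.e. getting constants that do not blow up as $x$ or $y$ approaches $\partial K$ or as $|x|,|y|\to\infty$ or $\to 0$. The scale-invariance of the cone ($K=tK$) handles the radial direction by rescaling, reducing everything to points on the unit sphere or in a fixed annulus; the convexity (NTA property) of $K$ together with \eqref{eq:harmonic.boundary} — which is the only boundary information we have, since unlike \cite{denisov_wachtel21} we do not assume the matching lower bound \eqref{eq:harmonic.boundary.lower} — is what lets us replace $h$ by $\delta$ near the boundary in the one direction we need (upper bound on $G$, hence we only need $h\le C|x|^{p-1}\delta$, not the reverse). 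I would therefore be careful to only ever use \eqref{eq:harmonic.boundary} and never its converse, and to phrase each comparison so that the direction of the inequality is consistent with an upper bound on $G$. A secondary technical point is the matching of the four regions on their overlaps, which is routine: one checks that on the boundary between two cases the two expressions for $\widehat G$ are comparable, so a single constant $C$ works globally.
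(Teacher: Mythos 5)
Your overall strategy (a four-region case analysis driven by a boundary-Harnack/Hirata-type bound for $G$, using only the upper bound \eqref{eq:harmonic.boundary} and never its converse) is the same as the paper's, and your treatment of the two long-range cases and of the interior case $|x-y|<\delta(y)/2$ is essentially right. The genuine gap is in the third case $\delta(y)/2\le|x-y|\le\frac12|y|$. You derive the factor $\delta(x)^2/h(x)$ from the comparison $h(x)/h(\tilde x)\asymp\delta(x)/|x-y|$ together with $h(\tilde x)\asymp|x|^{p-1}|x-y|$ for a corkscrew point $\tilde x$ at scale $|x-y|$. The second claim is false for a general product-form $h$: near an edge of the cone where several of the linear factors $\langle\cdot,\alpha_i\rangle$ degenerate simultaneously (e.g.\ near $x_1=\cdots=x_d$ for the type-$A$ Vandermonde), a point at distance $\sim|x-y|$ from $\partial K$ and radius $\sim|x|$ only satisfies $h(\tilde x)\gtrsim|x-y|^p$, which can be far smaller than $|x|^{p-1}|x-y|$ when $|x-y|\ll|x|$. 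If you fall back on the only lower bound that is actually available without \eqref{eq:harmonic.boundary.lower} — namely $h(b)\ge c\,\delta(b)^p\ge c|x-y|^p$ from convexity — you get $G(x,y)\le C\,h(x)h(y)/|x-y|^{2p+d-2}$, and checking whether this is dominated by $h(y)\delta(x)^2/(h(x)|x-y|^d)$ reduces to $h(x)\le C\delta(x)|x-y|^{p-1}$, which does not follow from \eqref{eq:harmonic.boundary} when $|x-y|\ll|x|$. So the step as written does not close.

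What the paper does instead, and what your sketch is missing, is an explicit use of the product structure (P4) at exactly this point. Starting from Hirata's bound $G(x,y)\le C\,h(x)h(y)/\bigl(h(b)^2|x-y|^{d-2}\bigr)$ with the specific choice $b=x+|y-x|x^0$, one writes $h(x)/h(b)=\prod_i\langle x,\alpha_i\rangle/\langle b,\alpha_i\rangle$, observes that every factor is at most $1$, and that the factor with the minimal $\langle x,\alpha_k\rangle$ is at most $\delta(x)/|x-y|$; this yields the one-sided bound $h(x)/h(b)\le\delta(x)/|x-y|$ with a \emph{single} power of $\delta(x)/|x-y|$ even near a corner where all $p$ factors are small. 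Squaring (the Hirata bound carries $h(b)^2$) then produces exactly $\bigl(h(x)/h(b)\bigr)^2/h(x)\le\delta(x)^2/(h(x)|x-y|^2)$, i.e.\ the third line of \eqref{eq:ghat}. To repair your argument you would need to prove an analogous one-sided factor-by-factor estimate for $h(\tilde y)$ (or work with a single reference point as in Hirata's theorem); generic boundary Harnack for NTA domains plus \eqref{eq:harmonic.boundary} is not enough. A minor additional point: in the fourth case with $d=2$, domain monotonicity applied to a disc \emph{contained} in $K$ gives a lower bound on $G$, not an upper one; you should instead compare with the Green function of a supporting half-plane at the nearest boundary point to $y$, whose distance to $y$ equals $\delta(y)$ by convexity.
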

  \begin{proof}    We will make use of \cite[Theorem 3.2]{hirata06}, which 
    states that 
    \begin{equation}
    \label{hirata}
      G(x,y) \le C \frac{h (x)h(y)}{(h(b))^2 |x-y|^{d-2}} 
    \end{equation}
    for $x,y \in K$ and $b\in \mathcal B(x,y)$. 
    Here, for some $\kappa\ge 1$, 
    \[
    \mathcal B(x,y) 
    := \left\{
        b \in K\colon 
        \max(|x-b|,|y-b|)\le \kappa|x-y| \text{ and }
        \delta(b) \ge \frac{|x-y|}{\kappa} 
      \right\}.  
    \]
   Recall that $h(x) \geq C\delta(x)^p$; this is proved in Lemma 2.2 of \cite{denisov_wachtel19} when $K$ is convex. 
    Recall that from Assumption (P4) the harmonic polynomial has the form for a sequence of unit vectors $\alpha_1, \ldots, \alpha_p$
\[
h(x) =  \prod_{i=1}^p \langle x, \alpha_i \rangle.
\]
Then without loss of generality we can consider 
\[
K = \{x \in \mathbb{R}^d : \langle x, \alpha_i \rangle\} > 0
\]
which is a convex cone.
Choose $x^0$ such that $\langle x^0, \alpha_i \rangle \geq 1$.    We choose \[
b = x + \lvert y - x\rvert x^0
\] which satisfies the requirements that $b \in \mathcal{B}(x, y)$.
    
  We start with the  first line in~\eqref{eq:ghat}. 
    Then
    $h(b)\ge C\delta(b)^p\ge C' |x-y|^p$ by the choice of $b$. 
    In turn, since $|x-y|\ge \frac{1}{2}|y|$, we obtain the required result. 
    
    To prove the second line in~\eqref{eq:ghat}, we consider  two cases. 
    First, when  $|x|/2\le |y|\le |x|$ we use the same argument to obtain 
    that 
    \[
      h(b)\ge C\delta(b)^p
      \ge C' |x-y|^p 
      \ge C'' |x|^p 
    \]
    since $|x-y|\ge \frac 12 |y|\ge \frac{1}{4}|x|$. 
    When $|y|\le |x|/2$ note first that  
    $|x-y|\ge |x|-|y|\ge |x|/2$. Then, 
    \[
      h(b)\ge C\delta(b)^p\ge C' |x-y|^p 
      \ge C'' |x|^p.
    \]
    This proves the second line.

To prove the third line in~\eqref{eq:ghat} choose $k$ such that $\langle x, \alpha_k\rangle = \min_{i=1}^p \langle x, \alpha_i\rangle$.
Note that $\langle \lvert y - x \rvert x^0, \alpha_i \rangle > 0$ for all $i \neq k$.
Therefore,
\begin{align*}
\frac{h(x)}{h(b)} & = \prod_{i=1}^p \frac{\langle x, \alpha_i \rangle}{\langle x + \lvert y - x \rvert x^0, \alpha_i \rangle} \\
& \leq \frac{\langle x, \alpha_k \rangle}{\langle x + \lvert y - x \rvert x^0, \alpha_k \rangle} \\
& \leq \frac{\langle x, \alpha_k \rangle}{\lvert y - x \rvert \langle x^0, \alpha_k \rangle} \\
& \leq \frac{\langle x, \alpha_k \rangle}{\lvert y - x \rvert} \\
&  = \frac{\delta(x)}{\lvert y - x \rvert}. 
\end{align*}
Using this in \eqref{hirata} gives the third line.
    The fourth line follows by estimating the Green function 
    for Brownian motion in a cone by the Green function in the whole space.  
  \end{proof} 
\section{Construction of superharmonic function}
\label{sec:superharmonic}
For $x\in K$,  
let  $\beta(x)$ be the function defined in~\eqref{defn.beta}. 
For $y\in K$ let 
\begin{equation}\label{eq:ubeta}
U_\beta(y) := \int_{K} G(x,y) \beta(x) dx.  
\end{equation}
By the definition of the Green function 
\begin{equation}\label{eq:laplacian.drift}
\Delta U_\beta(y) = -\beta(y).
\end{equation}

Using Lemma~\ref{lem:green.bound0} and the definition of $\beta$ in Lemma \ref{lem:bound.f} one can show that $U_\beta$ is well defined.  
For that we will follow the approach in~\cite{denisov_wachtel21} and 
estimate the integral in~\eqref{eq:ubeta} using a sequence of Lemmas.
To make the comparison to~\cite{denisov_wachtel21} it is useful to observe that
\[
h(x) \leq C\delta(x) \lvert x \rvert^{p-1} 
\]
so that 
\[
\beta(x) = o\left(\frac{\lvert x \rvert^{p-1} \gamma(\delta(x))}{\delta(x)}\right).
\]

\begin{lemma}\label{lem:ghat.1}
There exists a function $\varepsilon_1(R)\to 0$ such that for $y\in  K: |y|>R$,
\begin{equation}\label{eq:ghat.1}
I_1(y):=\int_{K \cap \{|x|\le |y|, |x-y|\ge \frac 12 |y|\}}\widehat G(x,y)\beta(x) dx 
\le \varepsilon_1(R) h(y).
\end{equation}
\end{lemma}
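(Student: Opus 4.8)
The plan is to insert the explicit form of $\widehat G$ into the integral, reduce to estimating $\int_{K\cap\{|x|\le|y|\}}h(x)\beta(x)\,dx$, and evaluate that integral by slicing in the directions $\alpha_i$ supplied by assumption~(P4) together with Karamata's theorem.

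First I would observe that on the set $\{x\in K:|x|\le|y|,\ |x-y|\ge\tfrac12|y|\}$ the first case of~\eqref{eq:ghat} applies, so $\widehat G(x,y)=h(x)h(y)|y|^{-(2p+d-2)}$; enlarging the domain of integration to $K\cap\{|x|\le|y|\}$ gives
\[
I_1(y)\ \le\ \frac{h(y)}{|y|^{2p+d-2}}\int_{K\cap\{|x|\le|y|\}}h(x)\beta(x)\,dx .
\]
Thus it is enough to prove $\int_{K\cap\{|x|\le|y|\}}h(x)\beta(x)\,dx\le C\,|y|^{2p+d-2}\gamma(|y|)$ for all large $|y|$: then $I_1(y)\le C\gamma(|y|)h(y)\le C\gamma(R)h(y)$ for $|y|>R$ by monotonicity of $\gamma$, and $\varepsilon_1(R):=C\gamma(R)\to0$ since~\eqref{eq:gamma.integral.finite.3} forces $\gamma(R)\to0$.

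For the integrand I would combine three ingredients: from $\beta(x)=h(x)\int_{\delta(x)}^\infty\gamma(t)t^{-3}\,dt$ and the monotonicity of $\gamma$, $\int_{\delta(x)}^\infty\gamma(t)t^{-3}\,dt\le\gamma(\delta(x))/(2\delta(x)^2)$; from~\eqref{eq:harmonic.boundary}, $h(x)\le C|x|^{p-1}\delta(x)$; hence
\[
h(x)\beta(x)=h(x)^2\int_{\delta(x)}^\infty\frac{\gamma(t)}{t^3}\,dt\ \le\ C|x|^{2p-2}\gamma(\delta(x)).
\]
Writing $K=\{x:\langle x,\alpha_i\rangle>0,\ i=1,\dots,p\}$ with the $\alpha_i$ taken to be unit vectors (as in the proof of Lemma~\ref{lem:green.bound0}), one has $\delta(x)=\min_{1\le i\le p}\langle x,\alpha_i\rangle$, so $\gamma(\delta(x))=\max_i\gamma(\langle x,\alpha_i\rangle)\le\sum_{i=1}^p\gamma(\langle x,\alpha_i\rangle)$. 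Bounding $|x|^{2p-2}\le|y|^{2p-2}$ on the region and, for each $i$, rotating $\alpha_i$ onto the first coordinate axis and integrating in the remaining coordinates,
\[
\int_{K\cap\{|x|\le|y|\}}\gamma(\langle x,\alpha_i\rangle)\,dx\ \le\ C\int_0^{|y|}\gamma(t)\,(|y|^2-t^2)^{(d-1)/2}\,dt\ \le\ C|y|^{d-1}\int_0^{|y|}\gamma(t)\,dt ,
\]
and $\int_0^{|y|}\gamma(t)\,dt\le C|y|\gamma(|y|)$ for large $|y|$ by Karamata's theorem for slowly varying functions (the bounded contribution of $\int_0^1\gamma$ being absorbed because $|y|\gamma(|y|)\to\infty$). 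Assembling these bounds yields $\int_{K\cap\{|x|\le|y|\}}h(x)\beta(x)\,dx\le C|y|^{2p+d-2}\gamma(|y|)$, which completes the argument. Throughout I use that $\gamma$ may be taken bounded on $(0,\infty)$ — modifying it on a bounded set affects neither Lemma~\ref{lem:construction.gamma} nor Lemma~\ref{lem:bound.f} — so that every integral above converges.

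The delicate point is this last integral estimate: one has to control the interaction of the polynomial weight $|x|^{2p-2}$ with the slowly varying factor $\gamma(\delta(x))$, including the behaviour near $\partial K$ (where $\delta(x)$ is small, but $h(x)$ is correspondingly small — this is exactly where~\eqref{eq:harmonic.boundary} enters) and near the origin, and then extract the sharp gain $\gamma(|y|)$ via Karamata's theorem. An equivalent, more ``shell-by-shell'' route — closer to~\cite{denisov_wachtel21} — is to split $K\cap\{|x|\le|y|\}$ into dyadic annuli $\{2^{k-1}\le|x|<2^k\}$, use the estimate $\int_{\{2^{k-1}\le|x|<2^k\}}\gamma(\delta(x))\,dx\le C2^{kd}\gamma(2^k)$, and sum the resulting geometric-type series, which is dominated by its top term $2^k\sim|y|$.
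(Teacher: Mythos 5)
Your argument is correct and follows essentially the same route as the paper, which simply defers to Lemma~12 of~\cite{denisov_wachtel21}: insert the first case of~\eqref{eq:ghat}, reduce to bounding $\int_{K\cap\{|x|\le|y|\}}h(x)\beta(x)\,dx$ via $h(x)\le C|x|^{p-1}\delta(x)$ and the elementary bound $\int_{\delta(x)}^\infty\gamma(t)t^{-3}dt\le\gamma(\delta(x))/(2\delta(x)^2)$, and extract the factor $\gamma(|y|)$ by slicing and Karamata. Your use of the hyperplane structure from (P4) to write $\delta(x)=\min_i\langle x,\alpha_i\rangle$ and slice along each $\alpha_i$ is a clean, valid implementation of the volume estimate, and the housekeeping remarks (normalising $\gamma$ to be bounded near $0$, and $\gamma(R)\to0$ from~\eqref{eq:gamma.integral.finite.3}) are sound.
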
    
\begin{proof}
  The proof is the same as  the proof of Lemma~12 in 
  \cite{denisov_wachtel21}. 
\end{proof}

\begin{lemma}\label{lem:ghat.2}
There exists a function $\varepsilon_2(R)\to 0$ such that for $y\in K: |y|>R$,
\begin{equation}\label{eq:ghat.2}
I_2(y):=\int_{K \cap \{|x|\ge |y|, |x-y|\ge A |y|\}}
\widehat G(x,y)\beta(x) dx \le \varepsilon_2(R) h(y).
\end{equation}
\end{lemma}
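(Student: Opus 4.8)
The plan is to reduce $I_2(y)$ to a one‑dimensional integral by passing to polar coordinates in $K$ and using the homogeneity of $h$ and of $\delta$, in the spirit of the proof of Lemma~13 of~\cite{denisov_wachtel21}. First I would note that on the domain of integration $|x|\ge|y|$ and $|x-y|\ge A|y|\ge\tfrac12|y|$ (we may take $A\ge\tfrac12$), so that the second line of~\eqref{eq:ghat} applies: $\widehat G(x,y)=h(x)h(y)/|x|^{2p+d-2}$ there.

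Next I would establish the pointwise bound $\widehat G(x,y)\beta(x)\le C\,h(y)\,\gamma(\delta(x))/|x|^{d}$ on this region. This follows from the monotonicity of $\gamma$, which gives $\int_{\delta(x)}^{\infty}\gamma(t)t^{-3}\,dt\le\gamma(\delta(x))/(2\delta(x)^{2})$ and hence, using $h(x)\le C|x|^{p-1}\delta(x)$ from~\eqref{eq:harmonic.boundary}, $\beta(x)\le C|x|^{p-1}\gamma(\delta(x))/\delta(x)$; a second application of~\eqref{eq:harmonic.boundary} to the factor $h(x)$ inside $\widehat G$ makes the powers of $|x|$ and $\delta(x)$ cancel. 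Writing $x=\rho\sigma$ with $\rho=|x|$, $\sigma\in\Sigma$, so that $dx=\rho^{d-1}\,d\rho\,d\sigma$ and $\delta(\rho\sigma)=\rho\,\delta(\sigma)$, dropping the constraint $|x-y|\ge A|y|$ and substituting $u=\rho\,\delta(\sigma)$, I get
\[
I_2(y)\le C\,h(y)\int_\Sigma\int_{|y|}^{\infty}\frac{\gamma(\rho\,\delta(\sigma))}{\rho}\,d\rho\,d\sigma
=C\,h(y)\int_\Sigma\int_{|y|\delta(\sigma)}^{\infty}\frac{\gamma(u)}{u}\,du\,d\sigma
\le\varepsilon_2(R)\,h(y),
\]
where $\varepsilon_2(R):=C\int_\Sigma\int_{R\delta(\sigma)}^{\infty}\gamma(u)u^{-1}\,du\,d\sigma$, using $|y|>R$ in the last step.

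It remains to check $\varepsilon_2(R)\to0$ as $R\to\infty$, which I would do by dominated convergence on $\Sigma$. For each fixed $\sigma$ we have $\delta(\sigma)>0$ and $\int_1^{\infty}\gamma(u)u^{-1}\,du<\infty$ by Lemma~\ref{lem:construction.gamma}, so the inner integral tends to $0$; for an integrable dominating function, split the inner integral at $u=1$ and bound it, for $R\ge1$, by $C\bigl(1+\log^{+}(1/\delta(\sigma))\bigr)$ (using boundedness of $\gamma$, extending $\gamma$ by $\gamma(1)$ below $1$ if necessary). This is in $L^{1}(\Sigma)$: since $\partial K\subseteq\bigcup_i\{\langle\cdot,\alpha_i\rangle=0\}$ by assumption (P4), we have $\delta(\sigma)\ge\min_i\langle\sigma,\alpha_i\rangle$, hence $\log^{+}(1/\delta(\sigma))\le\sum_i\log^{+}(1/\langle\sigma,\alpha_i\rangle)$, and each summand is integrable on $S^{d-1}$ because $\{\sigma:\langle\sigma,\alpha_i\rangle<\varepsilon\}$ has surface measure $O(\varepsilon)$. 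Dominated convergence then gives $\varepsilon_2(R)\to0$.

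The computation is essentially routine once the right case of $\widehat G$ is isolated; the only delicate point — and the step I expect to require the most care — is the behaviour near $\partial K$, where $\beta$ blows up like $\delta^{-1}$ and must be controlled by the compensating vanishing of $h$ in the second line of $\widehat G$. In polar coordinates this manifests as an angular integrand growing like $\log(1/\delta(\sigma))$, and the thing to verify is precisely that this remains integrable over $\Sigma$, which is exactly where the product form of $h$ in assumption (P4) is used.
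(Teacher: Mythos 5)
Your proof is correct, and it is essentially the argument the paper relies on: the paper simply defers to Lemma~13 of~\cite{denisov_wachtel21}, whose proof is the same polar-coordinate reduction you carry out, with the two applications of the bound $h(x)\le C|x|^{p-1}\delta(x)$ making the powers of $|x|$ and $\delta(x)$ cancel and the angular integrability of $\log^{+}(1/\delta(\sigma))$ supplying the finiteness of $\varepsilon_2(R)$. Your explicit verification of that angular integrability via the product structure in (P4) is a valid instantiation of the Lipschitz-cone argument used there.
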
    
\begin{proof}
  The proof is the same as  the proof of Lemma~13 in 
  \cite{denisov_wachtel21}. 
   \end{proof}

\begin{lemma}\label{lem:ghat.3}
There exists a bounded monotone 
function $\varepsilon_3(R)\to 0$ such that for $y\in K: d(y)>R$,
\begin{equation}\label{eq:ghat.3}
I_3(y):=
\int_{K \cap \{\delta(y)/2<|x-y|<\frac{1}{2}|y|\}}\widehat G(x,y)\beta(x) dx \le \varepsilon_3(R) h(y).
\end{equation}
\end{lemma}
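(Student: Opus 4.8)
The plan is to follow the structure of the corresponding lemma in~\cite{denisov_wachtel21}, replacing the power‑type tail bound used there by the slowly varying function $\gamma$ from Lemma~\ref{lem:construction.gamma}.

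First I would simplify the integrand. On the region $\{\delta(y)/2<|x-y|<\tfrac12|y|\}$ only the third branch of $\widehat G$ in~\eqref{eq:ghat} is active, so $\widehat G(x,y)\beta(x)=\frac{h(y)\delta(x)^{2}}{h(x)|x-y|^{d}}\,\beta(x)$. By the definition~\eqref{defn.beta} of $\beta$ we have $\beta(x)/h(x)=\int_{\delta(x)}^{\infty}\gamma(t)t^{-3}\,dt$ \emph{exactly}, so no bound on $h$ is needed and
\[
\widehat G(x,y)\beta(x)=\frac{h(y)}{|x-y|^{d}}\,\psi(\delta(x)),\qquad
\psi(s):=s^{2}\int_{s}^{\infty}\frac{\gamma(t)}{t^{3}}\,dt .
\]
Since $\gamma$ is monotone decreasing, $\psi(s)\le s^{2}\gamma(s)\int_{s}^{\infty}t^{-3}\,dt=\tfrac12\gamma(s)$ for every $s>0$ (and in fact $\psi(s)\asymp\gamma(s)$ by Karamata's theorem). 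Hence it suffices to prove that
\[
J(y):=\int_{K\cap\{\delta(y)/2<|x-y|<\tfrac12|y|\}}\frac{\gamma(\delta(x))}{|x-y|^{d}}\,dx\le \varepsilon_3(\delta(y))
\]
for some bounded monotone $\varepsilon_3(R)\to 0$, because then $I_3(y)\le\tfrac12 h(y)\,\varepsilon_3(\delta(y))\le\tfrac12 h(y)\,\varepsilon_3(R)$ on $\{\delta(y)>R\}$, which gives the claim after renaming constants.

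To bound $J(y)$ I would decompose dyadically, first in $\rho=|x-y|$ into shells $S_k:=\{2^{k-1}\delta(y)\le|x-y|<2^{k}\delta(y)\}$, $k=1,\dots,N$ with $2^{N}\asymp|y|/\delta(y)$, and then inside each $S_k$ in the value of $\delta(x)$. The geometric input is that $K$ is polyhedral: Assumption~(P4) gives $\partial K\subseteq\bigcup_{i=1}^{p}\{\langle x,\alpha_i\rangle=0\}$, so the boundary layer $\{x\in B(y,\rho):\delta(x)<s\}$ lies in a union of $p$ flat slabs of width $2s$ inside a ball of radius $\rho$ and has volume at most $Cs\rho^{d-1}$ for $s\le\rho$. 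Splitting $S_k$ into layers $\{\delta(x)\in[2^{j},2^{j+1})\}$, bounding $\gamma(\delta(x))\le\gamma(2^{j})$ on each by monotonicity, and using that the terms $a_j:=2^{j}\gamma(2^{j})$ satisfy $a_{j+1}/a_j=2\gamma(2^{j+1})/\gamma(2^{j})\to 2$ (so the sum over $j$ is geometrically dominated by its top term, at $2^{j}\asymp 2^{k}\delta(y)$, since $\delta(x)\le\delta(y)+|x-y|\lesssim 2^{k}\delta(y)$ on $S_k$), one obtains $\int_{S_k}\gamma(\delta(x))\,dx\le C(2^{k}\delta(y))^{d}\gamma(2^{k}\delta(y))$ and hence $\int_{S_k}\gamma(\delta(x))|x-y|^{-d}\,dx\le C\gamma(2^{k}\delta(y))$. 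Finally, since $\gamma$ is decreasing, $\sum_{k\ge 1}\gamma(2^{k}\delta(y))\le\int_{0}^{\infty}\gamma(2^{s}\delta(y))\,ds=\tfrac{1}{\ln 2}\int_{\delta(y)}^{\infty}t^{-1}\gamma(t)\,dt$, which is finite and tends to $0$ as $\delta(y)\to\infty$ by~\eqref{eq:gamma.integral.finite.3}; taking $\varepsilon_3(R):=C\int_{\max(R,1)}^{\infty}t^{-1}\gamma(t)\,dt$ (bounded and monotone decreasing) closes the estimate.

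The step needing the most care — and the only real departure from~\cite{denisov_wachtel21}, where $\gamma$ is an exact power — is the contribution of $x$ near $\partial K$, where $\gamma(\delta(x))$ is large. This is exactly what the slab bound $Cs\rho^{d-1}$ controls: the mass of the thin near‑boundary layers of a shell $S_k$ is, for the reason just noted, a negligible fraction of the bulk contribution $C(2^{k}\delta(y))^{d}\gamma(2^{k}\delta(y))$. One may also assume without loss of generality that $\gamma$ is constant on a fixed interval $(0,t_0]$ — Lemma~\ref{lem:construction.gamma} only constrains its behaviour at infinity — which makes the $j\to-\infty$ tail of the $\delta(x)$‑decomposition harmless (it contributes only an extra $O(1/\delta(y))$ to $J(y)$, which is absorbed into $\varepsilon_3$). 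Everything else is the routine dyadic bookkeeping already carried out in the analogous lemma of~\cite{denisov_wachtel21}.
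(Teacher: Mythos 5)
Your proposal is correct and follows essentially the same route as the paper: the paper likewise inserts the third branch of $\widehat G$, cancels $h(x)$ against $\beta(x)$ to reduce $I_3(y)/h(y)$ to $C\int \gamma(\delta(x))|x-y|^{-d}\,dx$ over the annular region, and then simply cites \cite[Lemma~14]{denisov_wachtel21} for that integral. Your dyadic shell-and-layer estimate is a correct self-contained substitute for that citation, and the slab volume bound you use is justified since $K$ is the polyhedral cone \eqref{cone.polynomial}, for which $\delta(x)=\min_i\langle x,\alpha_i\rangle$ (unit $\alpha_i$).
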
    
\begin{proof}
We have, 
\begin{align*}
\frac{I_3(y)}{h(y)}&\le C
\int_{K \cap \left\{\delta(y)/2<|x-y|<\frac 12 |y|\right\}}
\frac{(\delta(x))^2}{h(x)|x-y|^{d}}\beta(x) dx\\
      &\le
C \int_{W_Z \cap \left\{\delta(y)/2<|x-y|<A|y|\right\}}
\frac{\gamma(\delta(x))}{|x-y|^d} dx. 
\end{align*}    
This is the same estimate as the starting estimate in 
~\cite[Lemma 14]{denisov_wachtel21}. 
The rest of the proof is the same. 
\end{proof}    

\begin{lemma}\label{lem:ghat.4}
There exists a bounded monotone decreasing function 
$\varepsilon_4(R)\to 0$, as $R\to\infty$,  such that for $y\in K: \delta(y)>R$,
\begin{equation}\label{eq:ghat.4}
I_4(y):=\int_{K \cap \{|x-y|<\delta(y)/2\}}\widehat G(x,y)\beta(x) dx \le \varepsilon_4(R) h(y).
\end{equation}
\end{lemma}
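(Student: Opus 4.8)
The plan is to use that, on the domain of integration, the ball $B(y,\delta(y)/2)$ lies well inside $K$, so that $\delta(\cdot)$, $|\cdot|$ and the positive harmonic function $h$ are all comparable to their values at $y$; once this is in place, $I_4(y)$ reduces to an elementary radial integral of the near-diagonal part of $\widehat G$.

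First I would record the geometric comparisons valid for $|x-y|<\delta(y)/2$. Since the apex $0$ of the cone lies on $\partial K$ one has $\delta(y)\le|y|$, and hence $\tfrac12\delta(y)<\delta(x)<\tfrac32\delta(y)$ and $\tfrac12|y|<|x|<\tfrac32|y|$ on this ball. The crucial point, however, is the comparison of $h$ itself: because $B(y,\delta(y))\subset K$, the function $h$ is positive and harmonic there, so the Harnack inequality gives a constant $C$ with $h(x)\le C\,h(y)$ for all $x$ with $|x-y|<\delta(y)/2$. (The cruder estimate $h(x)\le C\delta(x)|x|^{p-1}$ used elsewhere would be too lossy here, since $|y|/\delta(y)$ is unbounded.) Substituting these into $\beta(x)=h(x)\int_{\delta(x)}^{\infty}\gamma(t)t^{-3}\,dt$ and using monotonicity of the tail integral together with $\int_{t}^{\infty}\gamma(u)u^{-3}\,du\le C\gamma(t)t^{-2}$ for large $t$ (Karamata, plus that $\gamma$ is slowly varying so $\gamma(\delta(y)/2)\le C\gamma(\delta(y))$), I obtain
\[
\beta(x)\le C\,h(y)\,\frac{\gamma(\delta(y))}{\delta(y)^{2}},\qquad |x-y|<\tfrac12\delta(y),
\]
for all $y$ with $\delta(y)$ large.

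Next I would plug in the fourth line of \eqref{eq:ghat}, which on $\{|x-y|<\delta(y)/2\}$ reads $\widehat G(x,y)=|x-y|^{2-d}I(d>2)+\ln(\delta(y)/|x-y|)I(d=2)$, and integrate in polar coordinates centred at $y$: for $d>2$ one gets $\int_{B(y,\delta(y)/2)}|x-y|^{2-d}\,dx\le C\int_{0}^{\delta(y)/2}r\,dr\le C\delta(y)^{2}$, and for $d=2$ the substitution $r=\delta(y)s$ gives $\int_{B(y,\delta(y)/2)}\ln(\delta(y)/|x-y|)\,dx=c\int_{0}^{\delta(y)/2}r\ln(\delta(y)/r)\,dr=c'\delta(y)^{2}$. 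Combining with the bound on $\beta$,
\[
\frac{I_4(y)}{h(y)}\le C\,\frac{\gamma(\delta(y))}{\delta(y)^{2}}\cdot\delta(y)^{2}=C\gamma(\delta(y)),
\]
and since $\gamma$ is monotone decreasing this is at most $C\gamma(R)$ whenever $\delta(y)>R$. Thus the lemma holds with $\varepsilon_4(R):=C\gamma(R)$, which is bounded and monotone decreasing; it tends to $0$ because a monotone decreasing $\gamma$ with $\int_1^\infty x^{-1}\gamma(x)\,dx<\infty$ (Lemma~\ref{lem:construction.gamma}) cannot have a strictly positive limit, for otherwise $\gamma\ge L>0$ would force the integral to diverge.

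The only point requiring care is the one just flagged: one must use the Harnack comparison of $h(x)$ with $h(y)$ on $B(y,\delta(y)/2)$ rather than the polynomial bound on $h$, and one must freeze the slowly varying factor $\gamma$ at $\delta(y)$ up to constants, which is exactly what the properties of $\gamma$ from Lemma~\ref{lem:construction.gamma} allow. Otherwise the argument is short; it is in substance the same as the corresponding lemma in \cite{denisov_wachtel21}, but self-contained enough to write out in full.
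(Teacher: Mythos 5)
Your proof is correct and follows essentially the same route as the paper's: Harnack comparison $h(x)\le Ch(y)$ on $B(y,\delta(y)/2)$, the bound $\delta(x)\ge\delta(y)/2$ together with monotonicity and slow variation of $\gamma$, and then the elementary radial integral of the near-diagonal Green function. If anything you are slightly more complete, since you treat the logarithmic kernel for $d=2$ explicitly and justify $\gamma(R)\to 0$ from the integrability condition in Lemma~\ref{lem:construction.gamma}, both of which the paper leaves implicit.
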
    

\begin{proof}
We use the Harnack inequality to obtain that for $\lvert x-y \rvert < \delta(y)/2$
\begin{align*}
h(x) 
& \leq Ch(y).
\end{align*}
Therefore using this along with $\delta(x) \geq \delta(y)/2$
\begin{align*}
I_4(y)
& \leq \int_{K \cap \{|x-y|<\delta(y)/2\}} \frac{1}{\lvert y - x\rvert^{d-2}}
\frac{h(x) \gamma(\delta(x))}{\delta(x)^2} dx\\
& \leq C\int_{K \cap \{|x-y|<\delta(y)/2\}} \frac{1}{\lvert y - x\rvert^{d-2}}
\frac{h(y) \gamma(\delta(x))}{\delta(x)^2} dx\\
& \leq C
\int_{K \cap \{|x-y|<\delta(y)/2\}} \frac{1}{\lvert y - x\rvert^{d-2}}
\frac{4 h(y) \gamma(\delta(y)/2)}{\delta(y)^2} dx \\
& \leq C h(y) \gamma(\delta(y)/2).   \qedhere
\end{align*}

\end{proof}

\begin{lemma}\label{lem:bound.u.beta}
There exists a bounded monotone decreasing function 
$\varepsilon(R)\to 0$, as $R\to\infty$,  such that for $y\in K: \delta(y)>R$, then $U_\beta$ is finite, and the following estimate holds,
\begin{equation}\label{eq:bound.u.beta}
    U_\beta(y) < \varepsilon(R) h(y).
\end{equation}
\end{lemma}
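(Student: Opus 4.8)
The plan is to assemble the four regional estimates of Lemmas~\ref{lem:ghat.1}--\ref{lem:ghat.4} into a single bound on the integral~\eqref{eq:ubeta}. First I would use Lemma~\ref{lem:green.bound0} to replace $G(x,y)$ by $C\widehat G(x,y)$, reducing the problem to controlling $\int_K\widehat G(x,y)\beta(x)\,dx$. Next I would observe that, for fixed $y$, the four integration domains appearing in Lemmas~\ref{lem:ghat.1}--\ref{lem:ghat.4} cover $K$ up to a set of Lebesgue measure zero: a point $x\in K$ with $|x-y|<\delta(y)/2$ lies in the domain of $I_4$; a point with $\delta(y)/2<|x-y|<\tfrac12|y|$ lies in the domain of $I_3$; and a point with $|x-y|\ge\tfrac12|y|$ lies in the domain of $I_1$ when $|x|\le|y|$ and in the domain of $I_2$ when $|x|>|y|$, provided the constant $A$ of Lemma~\ref{lem:ghat.2} is taken $\le\tfrac12$. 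Hence
\begin{equation*}
U_\beta(y)=\int_K G(x,y)\beta(x)\,dx\le C\int_K\widehat G(x,y)\beta(x)\,dx\le C\bigl(I_1(y)+I_2(y)+I_3(y)+I_4(y)\bigr).
\end{equation*}

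Then I would check that the hypothesis $\delta(y)>R$ is strong enough to activate all four estimates simultaneously. Since $K$ is an open cone with apex at the origin, $0\in\partial K$ and therefore $\delta(y)\le|y|$; thus $\delta(y)>R$ forces $|y|>R$, which is exactly the hypothesis of Lemmas~\ref{lem:ghat.1} and~\ref{lem:ghat.2}, while Lemmas~\ref{lem:ghat.3} and~\ref{lem:ghat.4} ask for $\delta(y)>R$ directly. Applying the four lemmas yields
\begin{equation*}
U_\beta(y)\le C\bigl(\varepsilon_1(R)+\varepsilon_2(R)+\varepsilon_3(R)+\varepsilon_4(R)\bigr)h(y),
\end{equation*}
and setting $\varepsilon(R):=\sup_{R'\ge R}C\bigl(\varepsilon_1(R')+\varepsilon_2(R')+\varepsilon_3(R')+\varepsilon_4(R')\bigr)$ gives a bounded, monotone decreasing function with $\varepsilon(R)\to0$ as $R\to\infty$. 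The same computation shows that the integral defining $U_\beta(y)$ is finite whenever $\delta(y)>R$, which completes the proof.

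There is no substantial obstacle here, as all of the analytic content is contained in Lemmas~\ref{lem:ghat.1}--\ref{lem:ghat.4}. The only point needing genuine care is the bookkeeping of the decomposition: verifying that the four domains genuinely cover $K$ with no gap (which pins down the admissible value of the constant $A$ and forces a consistent choice of it across Lemmas~\ref{lem:ghat.2} and~\ref{lem:ghat.3}), and confirming that $\delta(y)>R$ triggers every one of the regional bounds at once.
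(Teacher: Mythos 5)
Your proposal is correct and is exactly the paper's argument: the paper's proof consists of the single sentence that the bound follows by combining Lemmas~\ref{lem:ghat.1}--\ref{lem:ghat.4}, and you have supplied precisely the bookkeeping (coverage of $K$ by the four domains, the observation that $\delta(y)\le|y|$ so $\delta(y)>R$ activates all four estimates, and the monotonisation of $\varepsilon_1+\cdots+\varepsilon_4$) that the paper leaves implicit. Your remark about fixing the constant $A$ consistently at a value $\le\tfrac12$ across Lemmas~\ref{lem:ghat.2} and~\ref{lem:ghat.3} is a fair catch of a loose end in the paper's statements, but it does not change the substance of the argument.
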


\begin{proof}
This is proved by combining Lemma \ref{lem:ghat.1}, \ref{lem:ghat.2}, \ref{lem:ghat.3}
and \ref{lem:ghat.4}.
\end{proof}
Next we will prove some estimates 
for the derivatives of $U_\beta$ that will be used 
in the estimates for the error of diffusion approximation.  
We will need the following notation: for $\theta\in (0,1]$ and an open $D$, 
 \[
    [f]_{\theta, D} := 
    \sup_{y,z\in D, y\neq z}  \frac{|f(y)-f(z)|}{|y-z|^\theta}. 
 \]
\begin{lemma}\label{lem:bound.u.beta.d}
    There exists a function $\varepsilon(R) \to \infty$ such that for $y \in K$ : $\delta(y) > R$ and $\theta \in (0,1]$, we have,
    \begin{equation}\label{eq:u.beta0}
        |U_\beta(y)| \le \varepsilon(R) h(y)
    \end{equation}
    \begin{equation}\label{eq:u.beta1}
        \left|\frac{\partial U_\beta(y)}{\partial y_i} \right| \le \varepsilon (R) \frac{h(y)}{\delta(y)}+c\delta(y) \beta(y)
    \end{equation}
    \begin{equation}\label{eq:u.beta2}
        \left|\frac{\partial^2 U_\beta(y)}{\partial y_i y_j} \right| \le \varepsilon (R) \frac{h(y)}{\delta(y)^2}+c \beta(y)
    \end{equation}
    \begin{equation}\label{eq:u.beta3}
        \left[(U_\beta)_{y_i y_j}\right]_{\theta, B\left(y,\frac{1}{3}\delta_Z(y)\right)} \le \varepsilon (R) \frac{h(y)}{\delta(y)^{2+\theta}}+c \frac{\beta(y)}{\delta(y)^\theta}
    \end{equation}
\end{lemma}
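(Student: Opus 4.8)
Estimate~\eqref{eq:u.beta0} is nothing but Lemma~\ref{lem:bound.u.beta} (after relabelling $R$), so the plan is to deduce \eqref{eq:u.beta1}--\eqref{eq:u.beta3} from the Poisson equation $\Delta U_\beta=-\beta$ of \eqref{eq:laplacian.drift} by applying interior elliptic (Schauder) estimates on the concentric balls $B(y,\tfrac13\delta(y))\subset B(y,\tfrac12\delta(y))\subset K$. Since $\beta$ is locally H\"older --- indeed locally Lipschitz, see below --- $U_\beta$ is a classical $C^{2,\theta}_{\mathrm{loc}}$ solution of this equation on $K$, and the inner ball is exactly the one appearing in \eqref{eq:u.beta3}. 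The case decomposition follows the corresponding lemma in \cite{denisov_wachtel21}; the point requiring attention here is the quantitative control of the source term $\beta$.

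The first step, and the main obstacle, is to quantify the regularity of $\beta(x)=h(x)\,\Phi(\delta(x))$, where $\Phi(s):=\int_s^\infty\gamma(t)t^{-3}\,dt$, so that $\Phi'(s)=-\gamma(s)s^{-3}$. On $B:=B(y,\tfrac12\delta(y))$ I would combine: (i) Lemma~\ref{lem:harnack} together with the Harnack comparison $h(x)\le C h(y)$ already used in the proof of Lemma~\ref{lem:ghat.4}, which give $|h(x)-h(z)|\le C\,h(y)\delta(y)^{-1}|x-z|$ and $\sup_B h\le C h(y)$; (ii) the fact that $\delta$ is $1$-Lipschitz; and (iii) the Karamata asymptotics $\Phi(s)\sim\tfrac13\gamma(s)s^{-2}$ and the monotonicity and slow variation of $\gamma$, which on $B$ give $\Phi(\delta(x))\le C\gamma(\delta(y))\delta(y)^{-2}$ and $|\Phi'(\delta(x))|\le C\gamma(\delta(y))\delta(y)^{-3}$. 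Writing $\beta(x)-\beta(z)=\bigl(h(x)-h(z)\bigr)\Phi(\delta(x))+h(z)\bigl(\Phi(\delta(x))-\Phi(\delta(z))\bigr)$ and using $\beta(y)\sim\tfrac13 h(y)\gamma(\delta(y))\delta(y)^{-2}$, this yields
\[
\sup_{B}\beta\le C\beta(y),\qquad [\beta]_{\theta,B}\le C\,\frac{\beta(y)}{\delta(y)^{\theta}},\qquad 0<\theta\le 1.
\]
Separately, for $\delta(y)$ large enough that $\delta(x)>R$ on all of $B$, Lemma~\ref{lem:bound.u.beta} gives $\sup_B U_\beta\le\varepsilon(R)\sup_B h\le C\varepsilon(R)h(y)$.

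The second step is to insert these bounds into the scaled interior Schauder estimate for $\Delta u=-\beta$,
\[
\delta(y)\,|\nabla U_\beta(y)|+\delta(y)^2\,|\nabla^2 U_\beta(y)|+\delta(y)^{2+\theta}\,[\nabla^2 U_\beta]_{\theta,B(y,\frac13\delta(y))}
\le C\Bigl(\sup_{B}|U_\beta|+\delta(y)^2\sup_B\beta+\delta(y)^{2+\theta}[\beta]_{\theta,B}\Bigr),
\]
and to divide by the relevant power of $\delta(y)$. The term $\sup_B|U_\beta|\le C\varepsilon(R)h(y)$ produces the $\varepsilon(R)h(y)\delta(y)^{-k}$ summands in \eqref{eq:u.beta1}--\eqref{eq:u.beta3}, while the two $\beta$-terms, after substituting $\sup_B\beta\le C\beta(y)$ and $[\beta]_{\theta,B}\le C\beta(y)\delta(y)^{-\theta}$, are each of order $\delta(y)\beta(y)$, $\beta(y)$, $\beta(y)\delta(y)^{-\theta}$ in the three inequalities respectively, and hence combine into the stated $c$-terms. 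As an alternative to quoting the Schauder estimate one may split $U_\beta=v+w$ on $B$, with $w$ the Newtonian potential of $\beta\mathbf{1}_B$ and $v:=U_\beta-w$ harmonic in $B$, and then bound $\nabla^k v(y)$ by $C\delta(y)^{-k}\bigl(\sup_B|U_\beta|+\sup_B|w|\bigr)$ via interior derivative estimates for harmonic functions, and $w$ and its derivatives by standard Newtonian potential bounds in terms of $\sup_B\beta$ and $[\beta]_{\theta,B}$; this reproduces the same terms, exactly as in \cite{denisov_wachtel21}.

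In summary, the only genuinely non-routine step is the Lipschitz-type control of $\beta$ on $B(y,\tfrac12\delta(y))$ with the correct scaling $[\beta]_{\theta,B}\le C\beta(y)\delta(y)^{-\theta}$, which is where Lemma~\ref{lem:harnack}, the Karamata asymptotics, and the local comparability of $h$, $\delta$ and $\beta$ all enter; the remainder is bookkeeping of the $\varepsilon(R)$ versus $c$ contributions together with textbook interior elliptic regularity.
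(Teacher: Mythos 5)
Your proposal is correct and follows essentially the same route as the paper: the first bound is quoted from Lemma~\ref{lem:bound.u.beta}, the remaining three come from the scaled interior Schauder estimate for $\Delta U_\beta=-\beta$ on balls of radius comparable to $\delta(y)$, with $\sup U_\beta$ controlled via the Harnack inequality and the key seminorm bound $[\beta]_{\theta,B}\le C\beta(y)\delta(y)^{-\theta}$ obtained from exactly the same decomposition (Lemma~\ref{lem:harnack} for $h$, the $1$-Lipschitz property of $\delta$, and Karamata's theorem). The only cosmetic difference is that you keep the $\delta(y)^2\sup_B\beta$ term in the Schauder estimate explicitly, which is harmless since $\sup_B\beta\le C\beta(y)$ yields the same $c$-terms.
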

\begin{proof}
 The first inequality coincides with \eqref{eq:bound.u.beta}. 
Using the Theorem 4.6 in \cite{alma} with  the balls $B(y, r) $ and $B(y, 2r)$, where we take  $r=\frac{1}{3} \delta(y)$, we obtain 
\begin{equation}
    r(U_\beta(y))_{y_i} +r^2 (U_\beta(y))_{y_i y_j} \le C \left(\sup_{x\in B(y, 2r)} U_\beta(x) + r^{2+\theta} [\beta]_{\theta,B(y, 2r) }\right)
\end{equation}
and
\begin{equation}
    r^{2+\theta}[(U_\beta(y))_{y_i y_j}]_{\theta,B(y, r) } \le C \left( \sup_{x\in B(y, 2r)} U_\beta(x)+r^{2+\theta} [\beta]_{\theta,B(y, 2r) }\right). 
\end{equation}
In these equalities the first term can be estimated 
by using the Harnack inequality, 
\[
\sup_{x\in B(y, 2r)} U_\beta(x) 
\le \varepsilon(R) \sup_{x\in B(y, 2r)} h(x)
\le   \varepsilon(R) \frac{5/3}{(1/3)^{d-1}}h(y).
\]
We will  estimate 
$[\beta]_{\theta,B(y, 2r) }$.
For $x ,z \in B(y, \frac{2}{3}\delta_Z(y))$  and $\theta \in (0,1]$, by the triangle inequality, 
\[
\frac{|\beta(x)-\beta(z)|}{|x-z|^\theta} \le \frac{|h(x)-h(z)|}{|x-z|^\theta} \int_{\delta(x)}^{\infty}\frac{\gamma(t)}{t^3} dt + h(z) \frac{\left |\int_{\delta(z)}^{\delta(x)}\frac{\gamma(t)}{t^3} dt \right|}{|x-z|^\theta}. 
\]
Next note that 
\begin{equation}\label{eq.d.l}
|\delta(x)-\delta(z)| \le |x-z|.
\end{equation}
To prove this inequality consider first the case 
$\delta(x)>\delta(z)$. 
Let $\widetilde x$ be a point such that $|x-\widetilde x|=\delta(x)$ 
and note that $|z-\widetilde x|\ge\delta(z)$.
Then,  
\[
\delta(x)-\delta(z)
\le |x-\widetilde x|-|z-\widetilde x|
=|x-z+z-\widetilde x|-|z-\widetilde x|
\le |x-z|. 
\]
The case $\delta(x)<\delta(z)$ is symmetric. 

Applying the  mean value theorem 
and using the properties of slowly varying functions  
in the first line,
then using equation~\eqref{eq.d.l} and Karamata's  theorem in the second line we obtain 
\begin{align*}
    \frac{\left |\int_{\delta(z)}^{\delta(x)}\frac{\gamma(t)}{t^3} dt \right|}{|x-z|^\theta} &\le \frac{C\frac{\gamma(\delta(y))}{\delta(y)^3}\left|\delta(x)-\delta(z)\right|}{|x-z|^\theta} \\& \le C\frac{\gamma(\delta(y))}{\delta(y)^3} |\delta(x)-\delta(z)|^{1-\theta} \le \frac{C }{\delta(y)^\theta}\int_{\delta(y)}^{\infty}\frac{\gamma(t)}{t^3} dt. 
\end{align*}
Since $z\in B(y, \frac 23 \delta(y))$ we can make use of the Harnack inequality again to obtain,
\begin{equation}\label{eq:bound.h.z}
h^Z(z) \le 5\cdot 3^{d-2}h^Z(y). 
\end{equation}
Then,
\begin{equation}
    h(z) \frac{\left |\int_{\delta(z)}^{\delta(x)}\frac{\gamma(t)}{t^3} dt \right|}{|x-z|^\theta} \le c\frac{\beta(y)}{\delta(y)^\theta}. 
\end{equation}
Next, since $x, z\in B(y, \frac 23 \delta(y))$, 
applying Lemma~\ref{lem:harnack} we obtain 
\begin{align*}
   |h(x)-h(z)| &\le \left|\int^1_0 (\nabla  h(x+t(x-z)), x-z) dt\right| \\& \le |x-z| \max_{s\in B(y,\frac{2}{3} \delta(y))} \nabla h(s) \\& \le C |x-z| \max_{s\in B(y,\frac{2}{3} \delta(y))}  \frac{h(s)}{\delta(s)}
\end{align*}
for some finite constant $C$. 
It follows from~\eqref{eq:bound.h.z} that 
\[
\max_{s\in B(y,\frac{2}{3} \delta_Z(y))}  h(s) \le C_d h(y). 
\]
Since $s \in B\left(y, \frac{2}{3}\delta(y)\right)$, by the triangle inequality,
\[
\frac{1}{3} \delta(y) \le \delta(s) \le \frac{5}{3} \delta(y).
\]
Hence,
\[
|h(x)-h(z)|\le C|x-z| \frac{h(y)}{\delta(y)}.
\]
Since $x \in B\left(y, \frac{2}{3}\delta(y)\right)$, 
using the Karamata theorem, 
\begin{align*}
\frac{|h(x)-h(z)|}{|x-z|^\theta}\int_{\delta(x)}^{\infty}\frac{\gamma(t)}{t^3} dt &\le C \frac{|h(x)-h(z)}{|x-z|^\theta}\int_{\frac{1}{3}\delta(y)}^{\infty}\frac{\gamma(t)}{t^3} dt \\&  \le C\frac{|h(x)-h(z)|}{|x-z|^\theta} \frac{\gamma(\delta(y))}{\delta(y)^2} \\&\le C h(y)\frac{|x-z|^{1-\theta} }{\delta(y)} \int_{\delta(y)}^{\infty}\frac{\gamma(t)}{t^3} dt \\ 
&\le \frac{c \beta(y)}{\delta(y)^\theta}. \qedhere
\end{align*}
\end{proof}
Put $U_\beta(x)=0$ for $x\notin K$ and 
define the error of the  diffusion approximation 
\[
f_\beta(x):= \E\left[U_\beta(x+X); x+X\in K \right] -U_\beta(x),\quad x\in K. 
\]

\begin{lemma}
Suppose Assumption~\ref{ass-M2-h} holds. For any $\varepsilon >0$, there exists $R>0$ such that for $x\in K$ with $\delta(x)>R$, the following bound holds,
    \begin{equation}
    \left|f_\beta(x) + \frac{1}{2} \beta(x)\right| \le \varepsilon \beta(x). 
    \end{equation}
\end{lemma}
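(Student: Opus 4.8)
\emph{Plan.} Since $U_\beta$ satisfies $\Delta U_\beta=-\beta$ on $K$ by~\eqref{eq:laplacian.drift}, I would Taylor-expand $U_\beta$ to second order about $x$; after taking expectations and using $\E X=0$, $\E[X_iX_j]=\delta_{ij}$, the second-order part reproduces $\tfrac12\Delta U_\beta(x)=-\tfrac12\beta(x)$, the asserted main term, and the whole point is to show everything else is $o(\beta(x))$ as $\delta(x)\to\infty$ (equivalently, $\le\varepsilon\beta(x)$ for $\delta(x)>R$). Fix $\rho:=\tfrac14\delta(x)$, so that $|X|\le\rho$ forces $x+X\in K$ and $\overline{B(x,\rho)}$ sits inside the ball where~\eqref{eq:u.beta3} applies, and split (using that $U_\beta$ vanishes off $K$)
\[
f_\beta(x)=\underbrace{\E\big[U_\beta(x+X)-U_\beta(x);\ |X|\le\rho\big]}_{=:A}+\underbrace{\E\big[U_\beta(x+X);\ x+X\in K,\ |X|>\rho\big]}_{=:B}-\underbrace{U_\beta(x)\,\P(|X|>\rho)}_{=:C}.
\]

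\emph{The Taylor part and $C$.} On $\{|X|\le\rho\}$ write $U_\beta(x+X)-U_\beta(x)=\sum_i (U_\beta)_{y_i}(x)X_i+\tfrac12\sum_{i,j}(U_\beta)_{y_iy_j}(x)X_iX_j+R(x,X)$, with $|R(x,X)|\le C|X|^{2+\theta}\max_{i,j}[(U_\beta)_{y_iy_j}]_{\theta,B(x,\rho)}$, taking $\theta=\min(1,r-2)$ for $r>2$ and any $\theta\in(0,1)$ for $r=2$. Taking $\E[\,\cdot\,;|X|\le\rho]$ and using $\E[X_i;|X|\le\rho]=-\E[X_i;|X|>\rho]$ and $\E[X_iX_j;|X|\le\rho]=\delta_{ij}-\E[X_iX_j;|X|>\rho]$, I get $A=-\tfrac12\beta(x)+\mathcal E_1+\mathcal E_2+\mathcal E_3$ with $\mathcal E_1=-\sum_i(U_\beta)_{y_i}(x)\E[X_i;|X|>\rho]$, $\mathcal E_2=-\tfrac12\sum_{i,j}(U_\beta)_{y_iy_j}(x)\E[X_iX_j;|X|>\rho]$, $\mathcal E_3=\E[R(x,X);|X|\le\rho]$. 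Each is $o(\beta(x))$: bound $\mathcal E_1$ by~\eqref{eq:u.beta1} and $|\E[X_i;|X|>t]|\le\E[|X|;|X|>t]=o(\gamma(t)/t)$; bound $\mathcal E_2$ by~\eqref{eq:u.beta2} and $|\E[X_iX_j;|X|>t]|\le\E[|X|^2;|X|>t]=o(\gamma(t))$; bound $\mathcal E_3$ by~\eqref{eq:u.beta3} and $\E[|X|^{2+\theta};|X|\le t]=o(t^\theta\gamma(t))$, the last proved by a dyadic splitting of $\{|X|\le t\}$ plus slow variation of $\gamma$. Throughout one uses $\beta(x)\asymp h(x)\gamma(\delta(x))/\delta(x)^2$ (Karamata) to turn these into $o(\beta(x))$; this is exactly where (P2) and Lemma~\ref{lem:construction.gamma} are needed. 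And $C\le\varepsilon(R)h(x)\P(|X|>\rho)=o(\beta(x))$ by Lemma~\ref{lem:bound.u.beta} and $\P(|X|>t)\le t^{-r}\E[|X|^r;|X|>t]=o(\gamma(t)/t^2)$.

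\emph{The large-jump term $B$, where the difficulty lies.} Here $U_\beta(x+X)$ is not a polynomial and cannot be expanded; moreover a crude bound $U_\beta(y)\le C(1+|y|)^p$ is too weak, since $\P(|X|>t)$ decays only like $\gamma(t)/t^2$ while near $\partial K$ the quantity $\beta(x)$ can be much smaller than $|x|^p\gamma(\delta(x))/\delta(x)^2$. Instead I would use the a priori bound $U_\beta(y)\le C\,h(y+x_0)$ for all $y\in K$ (obtained from Lemma~\ref{lem:green.bound0} and the splitting of Lemmas~\ref{lem:ghat.1}--\ref{lem:ghat.4}, now tracked up to the boundary, as in~\cite{denisov_wachtel21}) together with the factorized form (P4): on $\{x+X\in K\}$ every factor of $h(x+X+x_0)=\prod_i(\langle x,\alpha_i\rangle+\langle X,\alpha_i\rangle+\langle x_0,\alpha_i\rangle)$ is positive and at most $\langle x,\alpha_i\rangle+|\langle X,\alpha_i\rangle|+C$, and expanding the product gives a finite sum over disjoint $S,T\subseteq\{1,\dots,p\}$ of terms $C^{\,p-|S|-|T|}\big(\prod_{i\in T}\langle x,\alpha_i\rangle\big)\big(\prod_{i\in S}|\langle X,\alpha_i\rangle|\big)$. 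The key point — the same mechanism as for $g_2$ in the proof of Lemma~\ref{lem:bound.f} — is that $\prod_{i\in S}\langle X,\alpha_i\rangle$, expanded in coordinates, only contains monomials in which each $X_\ell$ has degree $\le r$, so a union bound over coordinates and Lemma~\ref{lem:construction.gamma} yield
\[
\E\Big[\textstyle\prod_{i\in S}|\langle X,\alpha_i\rangle|\,;\ |X|>t\Big]=o\big(\gamma(t)\,t^{\,\mu(S)-2}\big),\qquad\mu(S)\le\min(|S|,r),
\]
using only $r$ moments, not $p$. Finally, with the geometric fact $\langle x,\alpha_i\rangle\ge c_K\delta(x)$ for every $i$ (automatic for facet normals by convexity, and true for the other factors of the cones considered, e.g. $x_i+x_j\ge2\delta(x)$ in type $C$), one has $\prod_{i\in T}\langle x,\alpha_i\rangle=h(x)/\prod_{i\notin T}\langle x,\alpha_i\rangle\le h(x)(c_K\delta(x))^{-(p-|T|)}$ with $p-|T|\ge|S|\ge\mu(S)$, so every term is $\le C\,h(x)\gamma(\delta(x))\delta(x)^{-2}\,o(1)=o(\beta(x))$ (taking $t=\rho\asymp\delta(x)$); in the sub-region where $x+X$ falls within a bounded distance of $\partial K$ one needs the full $h(y+x_0)$ bound, elsewhere $U_\beta(x+X)\le\varepsilon(R)h(x+X)$ of Lemma~\ref{lem:bound.u.beta} is enough. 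Adding $A+B-C$ gives $f_\beta(x)=-\tfrac12\beta(x)+o(\beta(x))$, which is the assertion. The hard part is precisely $B$: one must avoid $|\langle X,\alpha_i\rangle|\le|X|$ (which would force $\E|X|^p<\infty$) and exploit the per-variable degree $r$, while keeping the spatial prefactor in the product form so it matches $h(x)$, not $|x|^p$, near the boundary.
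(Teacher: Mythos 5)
Your decomposition into the Taylor part on $\{|X|\le\rho\}$, the large-jump term, and $U_\beta(x)\,\P(|X|>\rho)$ — with the second-order expansion producing $\tfrac12\Delta U_\beta=-\tfrac12\beta$ and the error terms controlled by the derivative estimates of Lemma~\ref{lem:bound.u.beta.d} and the tail/truncated-moment bounds from Lemma~\ref{lem:construction.gamma} — is exactly the paper's proof (its $E_3$, $E_1$, $E_2$ with cutoff $\delta(x)/2$), and the large-jump term is likewise handled there by bounding $U_\beta(x+X)\le\varepsilon(R)h(x+X)$ and expanding the polynomial $h$ as in Lemma~\ref{lem:bound.f}, exploiting that each variable appears with degree at most $r$. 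If anything you are more careful on one point: the paper applies the bound $U_\beta(y)\le\varepsilon(R)h(y)$ of Lemma~\ref{lem:bound.u.beta} at $y=x+X$ without addressing that it is stated only for $\delta(y)>R$, whereas you supply an up-to-the-boundary bound $U_\beta(y)\le C\,h(y+x_0)$.
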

\begin{proof}
We first split the estimate to three parts,
\[ 
\left|f_\beta(x)+\frac{1}{2}\beta(x)\right| \le 
E_1+E_2+E_3,
\]
where 
\begin{align*}
E_1&:=\E[U_\beta(x+X); |X|>\delta(x)/2],\\
E_2&:=U_\beta(x) \pr(|X|>\delta(x)/2),\\
E_3&:=\left|\E[U_\beta(x+X)-U_\beta(x); |X|\le \delta(x)/2]+\frac{1}{2}\beta(x)\right|. 
\end{align*}
We will apply the Taylor theorem to the third summand. 
By the Taylor theorem, on the event $\{|X|\le \delta(x)/2\}$, 
\begin{multline} 
    \left|U_\beta(x+X)-U_\beta(x)-\nabla U_\beta(x) \cdot X-\frac{1}{2}
    \sum_{1\le i, j\le d}
    \frac{\partial^2U(x)}{\partial x_i\partial x_j}X_iX_j\right| \\
    \le R_{2,\theta}(x) |X|^{2+\theta}, 
\end{multline}
  where 
      \[
          R_{2,\theta}(x)= 
          \sup_{i,j}
          \left[\frac{\partial^2U(x)}{\partial x_i\partial x_j}\right]_{\theta, B(x,\delta(x)/2)}<\infty. 
    \]
Then,  
\begin{multline*}
E_3\le 
\left|\E\left[\sum_{i=1}^d \frac{\partial U_\beta(x)}{\partial x_i} X_i; |X| \le \delta(x)/2\right]\right|\\
+
\left|\E\left[\frac{1}{2}\sum_{1\le i,j\le d}\frac{\partial^2 U_\beta(x)}{\partial x_i \partial x_j} X_i X_j; |X| \le \delta(x)/2\right] 
+\frac{1}{2}\beta(x) 
\right|
\\+ R_{2,\theta}(x)\E\left[|X|^{2+\theta};  |X|\le \delta(x)/2\right].
\end{multline*}
Making use of the assumption $\E[X]=0$, 
$\mathrm{cov}(X_i, X_j)=\delta_{i=j}$ and 
the definition 
of $U_\beta$ in~\eqref{eq:laplacian.drift} 
given by 
$\Delta U_\beta(x) =-\beta(x)$,  
we rearrange the terms to obtain 
\begin{multline*}
E_3\le 
\sum_{i=1}^d \left|\frac{\partial U_\beta(x)}{\partial x_i}\right|\E\left[|X|;|X|>\delta(x)/2\right]\\
+
\frac{1}{2}\sum_{1\le i,j\le d}\left|\frac{\partial^2 U_\beta(x)}{\partial x_i \partial x_j} \right|\E\left[|X|^2; |X|  >\delta(x)/2\right] \\ 
+ R_{2,\theta}(x)\E\left[|X|^{2+\theta}; |X|\le \delta(x)/2\right],  
\end{multline*}
where we also used that $|X_i| \le |X|$ and $|X_iX_j|\le |X|^2$. 
Applying further Lemma~\ref{lem:bound.u.beta.d}, 
\begin{multline*}
E_3\le 
\left(\varepsilon(R)\frac{h(x)}{\delta_Z(x)}+C\delta(x)\beta(x)\right)
d \E[|X|;|X|>\delta(x)/2]\\
+
\left(\varepsilon(R)\frac{h(x)}{(\delta(x))^2}+C\beta(x)\right)
\frac{d(d-1)}{2}
\E[|X|^2;|X|>\delta(x)/2]\\
+ \left(\varepsilon(R)\frac{h(x)}{\delta(x)^{2+\theta}}+C\frac{\beta(x)}{\delta(x)^\theta }\right)
\E\left[|X|^{2+\theta}; |X|\le \delta(x)/2\right]. 
\end{multline*}
 Since by Lemma~\ref{lem:construction.gamma}, 
\begin{align*}
\E[|X|;|X|>\delta(x)/2] &\le 
\frac{2}{\delta(x)}\E[|X|^2;|X|>\delta(x)/2]\\
\E[|X|^2;|X|>\delta(x)/2]
&= o(\gamma(\delta(x)))
\end{align*}
we obtain 
\begin{multline*}
E_3\le 
\left(\varepsilon(R)\frac{h(x)}{(\delta(x))^2}+\beta(x)\right)o(\gamma(\delta(x)))
\\
+ \left(\varepsilon(R)\frac{h(x)}{\delta(x)^{2+\theta}}+C\frac{\beta(x)}{\delta(x)^\theta }\right)\E\left[|X|^{2+\theta}; |X|<\delta(x)/2\right].
\end{multline*}
Next, using equation~\eqref{eq:gamma.integral.finite} and Karamata theorem,
\begin{multline*}
   \E\left[|X|^{2+\theta}, |X|\le \delta(x)/2\right] \le C\int^{\delta(x)/2}_{0}  y^{1+\theta} \P(|X|>y)dy  \\ \le C \int^{\delta(x)/2}_{0} y^{1+\theta} \frac{\E\left[|X|^{2};|X|>y\right]}{y^{2} } dy \\ 
   \le C\int^{\delta(x)/2}_{0}  y^{-1+\theta} o(\gamma(y)) dy \le C \delta_Z(x)^\theta o(\gamma(\delta(x))). 
\end{multline*}
Hence, for some $\widetilde \varepsilon(R)\downarrow 0$,
\begin{equation}\label{e3}
E_3\le \widetilde \varepsilon(R)\beta(x). 
\end{equation}
To estimate the other two terms $E_1$ and $E_2$ observe the following inclusion 
\[
\{|X|>\delta(x)\}\subseteq
\bigcup_{i=1}^d\{|X_i|>\delta(x)/\sqrt{d}
\}. 
\]
Then using equation \eqref{eq:bound.u.beta}, we repeat the same argument expanding $h(x+X)$ by the Taylor formula in Lemma~\ref{lem:bound.f},
\begin{align}    
\label{e1}
E_1&\le \varepsilon(R) \E\left[ |h(x+X)|,|X|>\delta(x)/2 \right] \\
\nonumber &\le  
\varepsilon(R) \sum_{k=0}^{p} \frac{1}{k!} \sum_{\alpha_1+\cdots+\alpha_d =  k}\left|\frac{\partial^k h(x)}{\partial x_1^{\alpha_1} \cdots \partial x_d^{\alpha_d}}\right| 
\\
\nonumber 
&\phantom{xxx}\times \prod_{j\neq i}\E[|X_j|^{\alpha_j}]  \E[|X_i|^{\alpha_i}; |X_i|>\delta(x)/(\sqrt{d})]\\ 
\nonumber 
&\le c \varepsilon(R) \beta(x).
\end{align}
Finally, by equation~\eqref{eq:gamma.integral.finite},
\begin{equation}
\label{e2}
  E_2\le \varepsilon(R) h(x) \sum^d_{i=1} \P\left(|X_i|>\delta(x)/(\sqrt{d}) \right) \le \varepsilon \beta(x).
\end{equation}
Combining~\eqref{e3},~\eqref{e1} and~\eqref{e2} we arrive the conclusion. 
\end{proof}
Next, we construct a positive supermartingale similarly to~\cite{denisov_wachtel21, denisov2023markov} as follows 
\[
V_\beta(x) = h(x+Rx_0)+ 3U_\beta(x+Rx_0).
\]

\begin{lemma}\label{sum.beta}
   Suppose Assumption~\ref{ass-M2-h} holds. There exists a sufficiently  large $R>0$ such that $V_\beta(x)$ is a non-negative superharmonic function in $K$, that is $V_\beta(S(n)){\rm 1}\{\tau_x>n\}$ is a non-negative supermartingale. Moreover,
    \begin{equation}\label{eq:sum.beta}
    \E \left[\sum^{\tau_x-1}_{k=0} \beta(x+Rx_0 +S(k))\right] \le 2 V_\beta(x)  
\end{equation}
and  there exists a function $\varepsilon(R)\to 0$ such that 
\[
    \E \left[\sum^{\tau_x-1}_{k=1} |f (x+Rx_0+S(k))|\right] 
    \le     \varepsilon (R) h(x+Rx_0). 
\]
\end{lemma}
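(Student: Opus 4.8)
Throughout set $c:=\mathrm{dist}(x_0+K,\partial K)>0$, and recall that since $K$ and $\partial K$ are cones we have $Rx_0+K\subseteq K$ and $\mathrm{dist}(Rx_0+K,\partial K)=cR$ for every $R>0$; in particular $\delta(z+Rx_0)\ge cR$ for all $z\in K$, which is as large as we wish once $R$ is large. The plan is to deduce all three assertions from a single one-step estimate: for every fixed sufficiently large $R$ and every $z\in K$,
\[
\E\bigl[V_\beta(z+X);\,z+X\in K\bigr]\ \le\ V_\beta(z)-\tfrac{1}{2}\beta(z+Rx_0).
\]
To establish it, write $y:=z+Rx_0$. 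The one delicate point is that the walk's killing event $\{z+X\in K\}$ is strictly smaller than $\{z+Rx_0+X\in K\}=\{y+X\in K\}$, and it is on the latter that $h(y+X)$ and $U_\beta(y+X)$ take their genuine values (recall the convention $U_\beta\equiv 0$ off $K$). Since $Rx_0+K\subseteq K$ we do have the inclusion $\{z+X\in K\}\subseteq\{y+X\in K\}$, and because $h\ge 0$ on $K$ and $U_\beta\ge 0$ everywhere, replacing the smaller indicator by the larger one only increases the expectation. This yields
\[
\E\bigl[V_\beta(z+X);\,z+X\in K\bigr]-V_\beta(z)\ \le\ \bigl(\E[h(y+X);y+X\in K]-h(y)\bigr)+3\bigl(\E[U_\beta(y+X);y+X\in K]-U_\beta(y)\bigr)=f(y)+3f_\beta(y).
\]
Now Lemma~\ref{lem:bound.f} gives $|f(y)|\le\frac{1}{4}\beta(y)$ once $\delta(y)$ is large, and the preceding lemma (the estimate $|f_\beta(x)+\tfrac{1}{2}\beta(x)|\le\varepsilon\beta(x)$, applied with $\varepsilon=\tfrac14$) gives $f_\beta(y)\le-\frac{1}{2}\beta(y)+\frac{1}{4}\beta(y)$; adding, $f(y)+3f_\beta(y)\le-\frac{1}{2}\beta(y)$, which is the claimed inequality.

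From this the superharmonic statement follows at once. Non-negativity holds because $h(\,\cdot+Rx_0)>0$ and $U_\beta(\,\cdot+Rx_0)\ge 0$ on $K$. For integrability I would expand $h(y+X)$ by Taylor's formula and observe that every monomial in $X$ has each coordinate $X_j$ to a power at most $r$ (higher partials of $h$ vanish) while the coordinate walks $S_1,\dots,S_d$ are independent, so under $\E|X|^r<\infty$ (and using $\beta\le Ch$) the quantities $\E[V_\beta(z+X);z+X\in K]$ and $V_\beta(x+S(n))\mathbf{1}\{\tau_x>n\}$ are finite; Lemma~\ref{lem:bound.u.beta} moreover gives $V_\beta(x)\le(1+o(1))h(x+Rx_0)\le 4h(x+Rx_0)$ for $R$ large. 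Evaluating the one-step inequality at $z=x+S(n)$ on $\{\tau_x>n\}$, and noting both relevant terms vanish on $\{\tau_x\le n\}$, shows $V_\beta(x+S(n))\mathbf{1}\{\tau_x>n\}$ is a non-negative supermartingale.

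For the summation bound I would introduce
\[
M_n:=V_\beta(x+S(n))\mathbf{1}\{\tau_x>n\}+\tfrac{1}{2}\sum_{k=0}^{n-1}\beta(x+Rx_0+S(k))\mathbf{1}\{\tau_x>k\},
\]
check from the one-step inequality that $(M_n)$ is a non-negative supermartingale, and conclude $\frac{1}{2}\E\bigl[\sum_{k=0}^{n-1}\beta(x+Rx_0+S(k))\mathbf{1}\{\tau_x>k\}\bigr]\le\E[M_n]\le M_0=V_\beta(x)$; monotone convergence as $n\to\infty$ then gives $\E\bigl[\sum_{k=0}^{\tau_x-1}\beta(x+Rx_0+S(k))\bigr]\le 2V_\beta(x)$. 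Finally, for every $k<\tau_x$ the point $x+Rx_0+S(k)$ lies in $Rx_0+K$, hence has $\delta\ge cR$, so Lemma~\ref{lem:bound.f} bounds $|f|$ there by $\varepsilon(R)\beta$ with $\varepsilon(R)\to 0$ as $R\to\infty$; combining this with the summation bound and $V_\beta(x)\le 4h(x+Rx_0)$ gives $\E\bigl[\sum_{k=1}^{\tau_x-1}|f(x+Rx_0+S(k))|\bigr]\le 8\varepsilon(R)h(x+Rx_0)$, as required.

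I expect the main obstacle to be the event-mismatch step in the one-step estimate — arranging that the inequality points in the right direction by exploiting the non-negativity of $h$, of $U_\beta$, and the convention $U_\beta\equiv 0$ off $K$ — and, relatedly, the integrability check, which genuinely uses that each coordinate appears in $h$ to degree at most $r$ (not $p$) and that the coordinate walks are independent. Everything else is bookkeeping built on Lemma~\ref{lem:bound.f}, Lemma~\ref{lem:bound.u.beta}, and the bound on $f_\beta$.
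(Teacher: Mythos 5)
Your proposal is correct and follows essentially the same route as the paper, which omits the proof by deferring to \cite[Lemma 13]{denisov2023markov}: the one-step drift inequality obtained from Lemma~\ref{lem:bound.f} and the bound on $f_\beta$, followed by the compensated supermartingale $M_n$ and optional stopping/monotone convergence, is exactly the standard argument being invoked there. Your handling of the event mismatch $\{z+X\in K\}\subseteq\{z+Rx_0+X\in K\}$ via non-negativity of $h$ and $U_\beta$, and the integrability check using that each coordinate appears in $h$ to degree at most $r$, are the right details to supply.
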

\begin{proof}
    The proof follows the same arguments as the proof of~\cite[Lemma 13]{denisov2023markov} 
    and hence is omitted. 
\end{proof}

\section{Construction of harmonic function}
\label{sec:harmonic}

Recall $x_0$ is defined to satisfy the starlike property of the cone that $x_0 + K \subset K$ and $\text{dist}(x_0 + K, \partial K) > 0$. 
\begin{lemma}
Suppose Assumption~\ref{ass-M2-h} holds.
The function $V(x)$ defined in equation~\eqref{eq:harmonic} is positive, finite and harmonic for $(x+S(n))_{n \geq 0}$ killed at leaving $K$. 
It can be represented as follows 
for sufficiently large $R$ as 
\begin{multline}\label{harmonic.function}
    V(x)=h(x+Rx_0)- \E[h(x+Rx_0+S(\tau_x)); x+Rx_0+S(\tau_x)\in K] 
    \\
    +\E \left[\sum^{\tau_x-1}_{k=0} f (x+Rx_0+S(k))\right].  
\end{multline}
\end{lemma}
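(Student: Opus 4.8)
The plan is to prove the representation \eqref{harmonic.function} for the \emph{shifted} walk $\hat S(n):=x+Rx_0+S(n)$, with $R$ chosen as in Lemma~\ref{sum.beta}, and then to identify the resulting limit with $V(x)$. Two elementary observations set this up. First, since $\mathrm{dist}(x_0+K,\partial K)>0$ and $K$ is a cone, $\delta(x+Rx_0)\ge cR$ for every $x\in K$, so the shifted walk stays bounded away from $\partial K$ at time $0$; second, since $\langle x_0,\alpha_i\rangle>0$ for all $i$, on $\{\tau_x>n\}$ each linear factor of $h$ only increases under the shift, so $0\le h(x+S(n))\le h(\hat S(n))$. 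Combined with the supermartingale property of $V_\beta(\hat S(n))\mathbf{1}\{\tau_x>n\}$ from Lemma~\ref{sum.beta}, this gives $\E[h(x+S(n));\tau_x>n]\le\E[h(\hat S(n));\tau_x>n]\le\E[V_\beta(\hat S(n));\tau_x>n]\le V_\beta(x)<\infty$, so all the expectations below are finite.

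The first main step is a one-step decomposition of $h$ along the killed shifted walk. Using the Markov property, the definition \eqref{eq:defn.f} of the error term $f$, and the inclusion $\{(x+S(n))+X\in K\}\subseteq\{\hat S(n)+X\in K\}$ (valid because $Rx_0+K\subset K$), one obtains, for every $N\ge 1$,
\[
\E[h(\hat S(N));\tau_x>N]
= h(x+Rx_0)
+ \E\Big[\sum_{k=0}^{(\tau_x\wedge N)-1} f(\hat S(k))\Big]
- \E\big[h(\hat S(\tau_x));\ \tau_x\le N,\ \hat S(\tau_x)\in K\big],
\]
where the last term records the (rare) paths on which the unshifted walk has left $K$ but the shifted one has not. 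Letting $N\to\infty$: the sum converges by dominated convergence, the dominating series $\sum_{k\ge 0}|f(\hat S(k))|\mathbf{1}\{\tau_x>k\}$ being integrable by Lemma~\ref{sum.beta}; the last term converges by monotone convergence, the integrand being non-negative and the events increasing in $N$; and its limit is finite because the left-hand side is non-negative and the other two terms are bounded. This establishes that $\lim_N\E[h(\hat S(N));\tau_x>N]$ exists and equals the right-hand side of \eqref{harmonic.function}.

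Harmonicity is then obtained by conditioning on the first step: after time one the process is a copy of the shifted walk started from $x+S(1)$, so $\E[h(\hat S(N));\tau_x>N\mid\mathcal F_1]=\mathbf{1}\{x+S(1)\in K\}\,\E\big[h\big((x+S(1))+Rx_0+S'(N-1)\big);\tau>N-1\big]$; passing to the limit with dominated convergence, using the dominating function $V_\beta(x+S(1))\mathbf{1}\{x+S(1)\in K\}$ which is integrable by $\E[V_\beta(x+S(1));\tau_x>1]\le V_\beta(x)$, yields $V(x)=\E[V(x+S(1));\tau_x>1]$. For positivity, $V\ge 0$ is immediate; strict positivity follows by noting from \eqref{harmonic.function} that $V(x)\ge(1-\varepsilon(R))h(x+Rx_0)-\E[h(\hat S(\tau_x));\hat S(\tau_x)\in K]$ and that the last quantity is $o(h(x+Rx_0))$ as $\delta(x)\to\infty$ (when $\delta(x)$ is large the shifted walk can only be in $K$ at the exit time if $x+S(\tau_x)$ has landed in a thin slab near $\partial K$), so $V>0$ on $\{\delta>M\}$ for some $M$; the harmonic equation then propagates positivity to all of $K$ by a standard reachability argument. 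It remains to identify this limit with $V(x)=\lim_n\E[h(x+S(n));\tau_x>n]$: since $0\le h(\hat S(n))-h(x+S(n))\le C_R(1+|\hat S(n)|)^{p-1}$ on $\{\tau_x>n\}$, this reduces to checking that $\E[(1+|\hat S(n)|)^{p-1}\mathbf{1}\{\tau_x>n\}]\to 0$, equivalently that the right-hand side of \eqref{harmonic.function} does not depend on $R$ for large $R$.

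I expect the main obstacle to be the two limit passages under only $r\le p$ moments. The term $\E[h(\hat S(\tau_x));\hat S(\tau_x)\in K]$ is $h$ (degree $p$) evaluated at an overshoot, and the series $\sum_k f(\hat S(k))\mathbf{1}\{\tau_x>k\}$ must be controlled with no more than the assumed moments; both are handled precisely by working with the shift, which keeps $\delta(\hat S(k))$ bounded below, together with the estimate $|f|=o(\beta)$ of Lemma~\ref{lem:bound.f} and the summability bound of Lemma~\ref{sum.beta}. This is exactly why the representation is stated in terms of $x+Rx_0$ rather than $x$, and why the $(p-1)$-degree tail estimate in the identification step is genuinely needed rather than cosmetic.
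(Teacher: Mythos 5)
Your martingale decomposition of $h_+(x+Rx_0+S(n\wedge\tau_x))$, the limit passages via Lemma~\ref{sum.beta}, and the harmonicity argument by conditioning on the first step all coincide with the paper's proof, and your positivity sketch is reasonable (the paper is in fact silent on strict positivity). The genuine gap is in the last step, the identification of the limit, i.e.\ showing $\E[h(x+Rx_0+S(n));\tau_x>n]-\E[h(x+S(n));\tau_x>n]\to0$. You bound the difference on $\{\tau_x>n\}$ by $C_R(1+|x+Rx_0+S(n)|)^{p-1}$ and reduce the problem to proving $\E[(1+|x+Rx_0+S(n)|)^{p-1};\tau_x>n]\to0$. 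This reduction is fatal: Assumption~(P2) only provides $r$ moments, and $r$ can be strictly smaller than $p-1$ (for the type $A$ chamber with $d=4$ one has $p=6$ but $r=3$). In that regime the quantity you reduce to is not merely hard to control, it can be $+\infty$ for every $n$: a single large positive jump of the top coordinate at the last step keeps the walk inside the chamber, so $\P(|S(n)|>t,\ \tau_x>n)\ge c_n\,\P(X>2t)$ for large $t$, and $\int t^{p-2}\P(X>2t)\,dt=\infty$ whenever $\E[(X^+)^{p-1}]=\infty$, which (P2) permits. Your closing remark that the convergence is ``equivalently that the right-hand side does not depend on $R$'' is circular and does not repair this.

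The paper's argument replaces your crude bound by the structure-aware one from Lemma~\ref{lem:harnack}: writing $\hat S(n)=x+Rx_0+S(n)$, on $\{\tau_x>n\}$ one has
\[
0\le h(\hat S(n))-h(x+S(n))\le CR\,\frac{h(\hat S(n))}{\delta(\hat S(n))}.
\]
Although this is also $O(|\hat S(n)|^{p-1})$ in the worst case, it is controlled through the two quantities the supermartingale machinery already bounds rather than through $|y|^{p-1}$: with $\varepsilon_n:=\E[\beta(\hat S(n));\tau_x>n]\downarrow0$ and $a_n:=1/\sqrt{\varepsilon_n}$, the monotonicity of $\gamma(t)/t$ and Karamata's theorem give $\E[h/\delta;\tau_x>n,\ \delta\le a_n]\le C\,a_n\varepsilon_n/\gamma(a_n)=C/(a_n\gamma(a_n))\to0$, while $\E[h/\delta;\tau_x>n,\ \delta>a_n]\le V_\beta(x)/a_n\to0$. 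This is exactly the point where the polynomial structure (degree at most $r$ in each variable) buys the weaker moment assumption, and it cannot be bypassed by the norm bound you propose.
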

\begin{proof}
Put $h_+(x) = h(x)I(x\in K)$. 
    Consider a sequence  $(Z_n)_{n \ge 0}$ given by 
    \[
    Z_n := h_+(x+Rx_0+S(n \wedge \tau_x))-\sum^{n \wedge \tau_x-1} _{k=0} f(x+Rx_0+S(k)), \quad n\ge 0.
    \]
    This sequence is a martingale since 
\begin{multline*}
   \E[Z_n-Z_{n-1} |\mathcal{F}_{n-1}]\\= {\rm 1}\{\tau_x>n-1\} \E \left[h_+(x+Rx_0+S(n))-h_+(x+Rx_0+S(n-1))|\mathcal{F}_{n-1}\right]\\-{\rm 1}\{\tau_x>{n-1}\} f(x+Rx_0+S(n-1)) =0, 
\end{multline*}
where we have used the definition of $f$ 
in~\eqref{eq:defn.f}
given by 
$f(x)=\E[h_+(x+X)]-h(x)$ for $x\in K$. 
Applying the optional stopping theorem to $(Z_n)_{n\ge 0}$ and rearranging the terms,
\begin{equation}\label{eq:zn}
\begin{split}
  &  \E \left[h_+(x+Rx_0+S(n)), \tau_x>n\right] \\
  &=h(x+Rx_0)+\E\left[\sum^{n-1}_{k=0} f(x+Rx_0+S(k)), \tau_x>n\right]\\
  &\phantom{xx}- \E\left[h_+(x+Rx_0+S(\tau_x)), \tau_x\le n\right]\\
  &\phantom{xx}+\E\left[\sum^{\tau_x-1}_{k=0} f(x+Rx_0+S(k)), \tau_x\le n\right].   
  \end{split}
\end{equation}
Since the sequence $h_+(x+S(\tau_x)){\rm 1}\{\tau_x \le n\}$ is increasing along $n$, by the  monotone convergence theorem,
\[
 \lim_{n \to\infty}\E\left[h_+(x+Rx_0+S(\tau_x)), \tau_x<n\right] =\E\left[h_+(x+Rx_0+S(\tau_x))\right]. 
\]
The dominated convergence theorem together with Lemma \ref{sum.beta} 
implies that two limits below exist and finite 
\begin{align}
\nonumber
\lim_{n \to\infty} \E\left[\sum^{\tau_x-1}_{k=0} f(x+Rx_0+S(k)), \tau_x\le n\right] &= \E\left[\sum^{\tau_x-1}_{k=0} f(x+Rx_0+S(k))\right],\\ 
\label{dct}
\lim_{n \to\infty} \E\left[\sum^{n-1}_{k=0} f(x+Rx_0+S(k)), \tau_x>n\right] &=0. 
\end{align}
Hence,
\begin{multline*}
\lim_{n \to\infty} \E \left[h(x+Rx_0+S(n)), \tau_x>n\right] =h(x+Rx_0)
\\- \E\left[h_+(x+Rx_0+S(\tau_x)\right]+\E \left[\sum^{\tau_x-1}_{k=1} f (x+Rx_0+S(k))\right]. 
\end{multline*}
Since the left-hand side of the last equation is positive, expectation  
 $\E\left[h_+(x+Rx_0+S(\tau_x)\right]$ is finite and hence the right-hand side is finite. 

We are left to get rid of $Rx_0$. 
For that we will show that $\E \left[h(x+Rx_0+S(n)), \tau_x>n\right] \sim \E \left[h(x+S(n)), \tau_x>n\right]$ as $n\to\infty$.
For $x\in K$ we will makes use of the Taylor theorem again, 
\begin{multline*}
0\le h(x+Rx_0+S(n))
-h(x+S(n))\\
=
\sum_{k=1}^{p} \frac{1}{k!} \sum_{\alpha_1+\cdots+\alpha_d = k}\frac{\partial^k h(x+Rx_0+S(n))}{\partial x_1^{\alpha_1} \cdots \partial x_d^{\alpha_d}} 
(-R)^k
(x_0)_1^{\alpha_1} \cdots (x_0)_d^{\alpha_d}. 
\end{multline*}
On the event $\{\tau_x>n\}$, applying Lemma~\ref{lem:harnack} we see that 
\begin{multline*}
0\le h(x+Rx_0+S(n))
-h(x+S(n))\\
\le C 
\sum_{k=1}^{p} \frac{h(x+Rx_0+S(n))}{\delta(x+Rx_0+S(n))^k} R^k
\le CR 
\frac{h(x+Rx_0+S(n))}{\delta(x+Rx_0+S(n)}. 
\end{multline*}
Put
\[
\varepsilon_n :=\E\left[\beta(x+Rx_0+S(n))), \tau>n\right].  
\]
Notice that $\varepsilon_n \downarrow 0$ by Lemma~\ref{sum.beta}.
Put $a_n=1/\sqrt{\varepsilon_n}$ and note 
that $a_n\uparrow \infty$.  
Then, since $\frac{\gamma(x)}{x}$ is monotone decreasing, 
    \begin{align}
    \nonumber
        & \E\left[\frac{h(x+Rx_0+S(n))}{\delta(x+Rx_0+S(n))}, \tau_x>n,\delta(x+Rx_0+S(n)) \le a_n \right]  \\
        \label{eq:hrn.1}
        & \le \frac{a_n }{\gamma(a_n)} \E\left[\frac{h(x+Rx_0+S(n))}{\delta(x+Rx_0+S(n))}\frac{\gamma(\delta(x+Rx_0+S(n)))}{\delta(x+Rx_0+S(n))}, \tau_x>n\right] \nonumber \\ 
        \nonumber
        & \le C \frac{a_n }{\gamma(a_n)} \E\left[\beta(x+Rx_0+S(n)), \tau_x>n\right] \nonumber \\
        &\le C\frac{a_n}{\gamma(a_n)} \varepsilon_n 
        = C\frac{1}{a_n \gamma(a_n)}
        \rightarrow 0,
    \end{align}
since $\gamma$ is a slowly varying function.  
Next use that $h(x+Rx_0) \leq V_{\beta}(x)$ and that $V_{\beta}$ is a supermartingale
\begin{equation}\label{eq:hrn.2}
    \begin{split}
        & \E\left[\frac{h(x+Rx_0+S(n))}{\delta(x+Rx_0+S(n))}, \tau_x>n,\delta(x+Rx_0+S(n)) > a_n \right] \\& \le \frac{C}{a_n} \E\left[h(x+Rx_0+S(n)), \tau_x>n\right] \\
       & \le \frac{C}{a_n} \E\left[V_{\beta}(x+S(n)), \tau_x>n\right]\\
       & \le \frac{C}{a_n}V_\beta(x) \rightarrow 0.
    \end{split}
\end{equation}
Combining  equation~\eqref{eq:hrn.1} and \eqref{eq:hrn.2} we obtain, 
\[
\left|\E \left[h(x+Rx_0+S(n)), \tau_x>n\right] - \E \left[h(x+S(n)), \tau_x>n\right] \right| \rightarrow 0
\]
as $n \rightarrow \infty$.

Therefore, the  function $V(x)$ below is well defined 
and does not depend on $R$ where the right hand side is valid for sufficiently large $R$,  
\begin{align}\label{eq:harmonic.function}
   & V(x)=\lim_{n \to\infty} \E \left[h(x+S(n)), \tau_x>n\right] \\&=h(x+Rx_0)- \E\left[h(x+Rx_0+S(\tau_x)\right]+\E \left[\sum^{\tau_x-1}_{k=1} f (x+Rx_0+S(k))\right]. \nonumber
\end{align}
By Lemma \ref{sum.beta},
\[
V(x) \le C h(x+Rx_0). 
\]
Then, the harmonic property of $V(x)$ follows from the the sequence of inequalities below together with dominated convergence theorem  
\begin{align*}
V(x)&=\lim_{n \to\infty}
\int_K \P(x+S(1)\in dy)
\E \left[h(y+S(n+1)), \tau_y>n-1\right]\\
&=
\int_K \P(x+S(1)\in dy)
\lim_{n \to\infty}
\E \left[h(y+S(n+1)), \tau_y>n-1\right]\\
&=\int_K \P(x+S(1)\in dy) V(y)
=\E[V(x+S(1));\tau_x>1]. \qedhere
\end{align*}
\end{proof}

\section{Upper bound  for $\pr(\tau_x>n)$}
\label{sec:tail_asy_bound}

We need first two auxiliary estimate. 
Let $\xi$ be a non-negative random variable. 
Then, for any $A>0$, 
\begin{equation}\label{eq:lower.bound.xi}
\E[\xi]\ge 
\frac{1}{A} \left(\E[\xi^2]-\E[\xi^2;\xi>A]\right). 
\end{equation}
Indeed, 
using that for non-negative $\xi$,
\begin{equation*}
\E\left[\frac{\xi^2}{A}; \xi \leq A\right] \leq \E[\xi; \xi \leq A] \leq \E[\xi]
\end{equation*}
we obtain 
\[
\E[\xi] \ge \frac{1}{A}\left(\E[\xi^2]-\E[\xi^2;\xi>A]\right). 
\]

To obtain a sharp upper bound of $\P(\tau_x>n)$, we will start by estimating it when the starting point near boundary. 
\begin{lemma}\label{lemma:bound.near.boundary}
Let Assumption~\ref{ass-m-h} hold. 
Then there exists a constant C such that,
\[
\P\left(\tau_x>n \right) \le C \frac{\delta(x+x_0)}{\sqrt{n}}. 
\]
In particular, there exists $C$ and $q>0$ such that 
for any $\varepsilon>0$, 
\[
    \sup_{x\in W_Z\colon \delta(x)< \varepsilon \sqrt{n}} \P(\tau_x >n) < C\varepsilon^q,\quad n\ge 1/\varepsilon^{2q}. 
\]
\end{lemma}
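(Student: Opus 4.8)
The plan is to bound $\P(\tau_x>n)$ by the survival probability of a suitable \emph{one-dimensional} random walk, namely the projection of $x+S(\cdot)$ onto the inward normal of the face of $\partial K$ nearest to $x$, and then to invoke the classical $n^{-1/2}$ estimate for one-dimensional walks. After normalising the vectors in (P4) so that $|\alpha_i|=1$ (which changes neither $K$ nor $\tau_x$), one has $\delta(x)=\min_{1\le i\le p}\langle x,\alpha_i\rangle$ for $x\in K$: indeed $\mathbb{R}^d\setminus K=\bigcup_i\{y:\langle y,\alpha_i\rangle\le 0\}$ and the distance from $x\in K$ to the half-space $\{\langle y,\alpha_i\rangle\le 0\}$ is exactly $\langle x,\alpha_i\rangle$. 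Fix $x\in K$ and pick $i_0=i_0(x)$ with $\langle x,\alpha_{i_0}\rangle=\delta(x)$. On $\{\tau_x>n\}$ we have $x+S(k)\in K$, and in particular $\langle x,\alpha_{i_0}\rangle+\langle S(k),\alpha_{i_0}\rangle>0$, for all $1\le k\le n$; hence, writing $T(k):=\langle S(k),\alpha_{i_0}\rangle$,
\[
\P(\tau_x>n)\ \le\ \P\bigl(\delta(x)+T(k)>0\ \text{ for all }1\le k\le n\bigr).
\]
By (P1) and the joint independence of the $X_i(j)$, the increments $\langle X(j),\alpha_{i_0}\rangle$ (with $X(j):=(X_1(j),\dots,X_d(j))$) of $(T(k))_{k\ge 0}$ are i.i.d., centred, with variance $|\alpha_{i_0}|^2=1$.

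Next I would quote the classical fact that for a centred, finite-variance one-dimensional random walk there is a non-decreasing function $V_0$ with $V_0(a)\le C(1+a)$ — the renewal function associated with the descending ladder heights of $T$ — such that $\P(a+T(k)>0,\ 1\le k\le n)\le C\,V_0(a)\,n^{-1/2}$ uniformly in $a\ge 0$ and $n\ge 1$. Applying this with $a=\delta(x)$ gives $\P(\tau_x>n)\le C(1+\delta(x))\,n^{-1/2}$. Note that only the finite second moment from (P1) is used here; the stronger assumptions (P2)--(P3) are not needed for this crude bound, which is why the lemma holds under exactly the hypotheses used for the finer results.

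It remains to pass from $1+\delta(x)$ to $\delta(x+x_0)$ using the geometry of the cone. Since $\langle x_0,\alpha_i\rangle>0$ for every $i$ (because $x_0\in\Sigma\subset K$), we get $\delta(x+x_0)=\min_i(\langle x,\alpha_i\rangle+\langle x_0,\alpha_i\rangle)\ge\min_i\langle x,\alpha_i\rangle=\delta(x)$; moreover $x+x_0\in x_0+K$, so by the defining property of $x_0$ one has $\delta(x+x_0)\ge c_0:=\mathrm{dist}(x_0+K,\partial K)>0$. Therefore $1+\delta(x)\le(1+c_0^{-1})\,\delta(x+x_0)$, which proves the first assertion. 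For the second assertion, conversely $\delta(x+x_0)\le\langle x+x_0,\alpha_{i_0}\rangle=\delta(x)+\langle x_0,\alpha_{i_0}\rangle\le\delta(x)+1$ (Cauchy--Schwarz); hence if $\delta(x)<\varepsilon\sqrt n$ then $\P(\tau_x>n)\le C(\varepsilon\sqrt n+1)n^{-1/2}=C\varepsilon+Cn^{-1/2}$, and for $n\ge\varepsilon^{-2}$ this is at most $2C\varepsilon$, i.e.\ the claim holds with $q=1$ (for $\varepsilon\ge 1$ it is trivial).

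I do not expect a substantial obstacle here: the single external ingredient is the uniform one-dimensional estimate together with $V_0(a)=O(1+a)$, and everything else is the projection argument and elementary facts about the distance to the boundary of a polyhedral cone. The only point requiring a little care is setting up the event inclusion and the identity $\delta(x)=\min_i\langle x,\alpha_i\rangle$ correctly. The machinery of the earlier sections (the majorant $\beta$, the superharmonic $V_\beta$, the diffusion-approximation bounds) is not used for this lemma; it enters afterwards, when this $n^{-1/2}$ bound is bootstrapped to the sharp rate $n^{-p/2}$.
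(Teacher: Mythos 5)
Your proof is correct, and it reaches the conclusion by a genuinely lighter route than the paper. Both arguments reduce to one dimension by projecting onto an inward normal, but the paper works with an \emph{arbitrary} unit vector $\mathbf e$ and then spends most of the proof deriving the bound $\pr(\tau_x^{(\mathbf e)}>n)\le C(x^{\mathbf e}+\tfrac12)/\sqrt n$ with a constant uniform over the whole continuum of directions (via the ratio $\E[(x+S(n),\mathbf e);\tau_x^{(\mathbf e)}>n]/\E[(S(n),\mathbf e)^+]$, Wald's identity, Erickson's renewal bound and Paley--Zygmund); this is what is needed for general convex or Lipschitz cones, where the supporting normals form a continuum. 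You instead exploit the polyhedral structure guaranteed by (P4): $\delta(x)=\min_i\langle x,\alpha_i\rangle$ for normalised $\alpha_i$, so only the $p$ fixed directions $\alpha_1,\dots,\alpha_p$ ever arise, and for each fixed direction the one-dimensional estimate $\P(a+T(k)>0,\,1\le k\le n)\le CV_0(a)n^{-1/2}$ with $V_0(a)\le C(1+a)$ is exactly the "immediate" fixed-$\mathbf e$ result the authors themselves cite before embarking on the uniformisation. The one point you should make explicit is that the constant in that classical estimate depends on the law of the projected increment $\langle X(1),\alpha_{i_0}\rangle$, hence on $i_0(x)$; uniformity in $x$ then follows by taking the maximum over the finitely many indices $i$, which is precisely the step that is unavailable for a general convex cone and forces the paper's longer argument. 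Your bookkeeping at the end ($1+\delta(x)\le(1+c_0^{-1})\delta(x+x_0)$ and $\delta(x+x_0)\le\delta(x)+1$, giving $q=1$) matches the paper's convex case. So the proposal is a valid, shorter proof under the lemma's actual hypotheses; the trade-off is that it does not extend beyond polyhedral cones, whereas the paper's version does.
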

\begin{remark}
For Lipschitz cones this Lemma was proved in~\cite[Lemma 11]{denisov2023markov}. Lemma 11~\cite[Lemma 11]{denisov2023markov} has a stronger requirement 
that~\eqref{eq:harmonic.boundary} and~\eqref{eq:harmonic.boundary.lower} hold, but they were not used and the Lipschitz property is sufficient. 
\end{remark}
\begin{proof}
We give a  proof in the  simpler case when $K$ is convex,  see~\cite[Lemma 11]{denisov2023markov} for the general case. In the convex case $q=1$. 
For any unit vector $\textbf{e}$ 
such that $(x,\textbf{e})>0$ 
consider a one-dimensional stopping time $\tau_x^{(\textbf{e})}$,
    \[
    \tau_x^{(\textbf{e})}:= \{k\ge 1, (x+S(k), \textbf{e}) \le 0\}. 
    \]
For a fixed vector $\textbf{e}$  the required estimate 
follows immediately from (30) in \cite[Lemma 3]{denisov_wachtel2016}.     
However, we need a  derive a uniform in $\textbf{e}$ bound for $\P(\tau_x^{(\textbf{e})}>n)$ which require more work. 

Denote $S^{\textbf e} (n) = (S(n),\textbf e)$ and 
$X^{\textbf e}_i(j) := (X_i(j),\textbf e)$.

It follows from~\cite[Lemma~25]{denisov_sakhanenko_wachtel18} that 
\[
\pr(\tau_x^{(\textbf{e})}>n)
\le \frac{\E[(x+S(n),e);\tau_x^{(\textbf{e})}>n]}{\E[(S(n),e)^+]}. 
\]

We will estimate $\E[(S^{\textbf e} (n))^+]$  first. 
Since $\E[S^{\textbf e} (n)]=0$ we obtain 
that $\E[(S^{\textbf e}(n))^+] = \frac12\E[|S^{\textbf e} (n)|]$. 
Fix $A>0$ that will be picked later. Then, using inequality~\eqref{eq:lower.bound.xi} 
\begin{align*}
\E[|S^{\textbf e} (n)|]
&=A\sqrt n 
\E\left[\frac{|(S(n),e)|}{A\sqrt n}\right]\\
&\ge 
\frac{\sqrt{n}}{A}
\left(
1- 
\E\left[\left(\frac{|S(n)|}{\sqrt n}\right)^2;|S(n)|> A\sqrt n\right]
\right),
\end{align*} 
where we used that $|S^{\textbf e}(n)|\le |S(n)|$
and $\E[(S^{\textbf e} (n))^2]=1$.  
By the Central Limit Theorem $\frac{S(n)}{\sqrt n}$ converges 
to the multivariate standard normal distribution. 
Using this convergence one can show that 
the sequence 
$\left(\frac{S(n)}{\sqrt n}\right)^2$ 
is uniformly integrable and hence there exists $A$ such that 
\[
\E\left[\left(\frac{|S_n|}{\sqrt n}\right)^2;|S_n|> A\sqrt n\right]
\le \frac12.
\]
For this value of $A$ we obtain that 
uniformly in $\mathbf e$
\[
\E[(S(n),e)^+]
\ge \frac{\sqrt n}{4A}. 
\]

Next we obtain  an upper bound for $\E[(x+S_n,e)$. 
First note that 
\[
\E[(x+S(n),e); \tau^{\textbf e}_x>n] 
\le x^{\textbf e} - 
\E[ S^{\textbf e}(\tau^{\textbf e})].
\]
Put $\sigma_-:=\inf\{n\ge 1\colon S^{\textbf e}(e) \le 0 \}$ and $\sigma_+:=\inf\{n\ge 1\colon S^{\textbf e}(e) > 0 \}$. Let $H_-(x)$ be the renewal function of the decreasing ladder height process of $(S^{\textbf e}(n))_{n\ge 0}$. 
Then, it follows from  Wald's identity that 
\[
x^{\textbf e} - 
\E[ S^{\textbf e}(\tau^{\textbf e})] 
= \E[S^{\textbf e}(\sigma_-)] H_-(x^{\textbf e}). 
\]
By the Erickson bound for the renewal function (see Lemma~1 in~\cite{Erickson73}), 
\[
H_-(x^{\textbf e}) \le 2 \frac{x^{\textbf e}}{\int_0^x \pr(|S^{\textbf e}(\sigma_-)|>t)dt}
\le 
2 \frac{x^{\textbf e}}{\int_0^{x^{\textbf e}} \pr((X^{\textbf e})^->t)dt}.
\]
By the Paley-Ziegmund inequality, 
\[
\pr\left((X^{\textbf e})^->\frac{1}{2} \E[(X^{\textbf e})^-]\right)
\ge \frac{1}{4} \frac{(\E[(X^{\textbf e})^-])^2}{(\E[(X^{\textbf e})^-)^2]}.
\]
We can further estimate 
$\E[(X^{\textbf e})^-)^2\le \E[(X^{\textbf e}))^2=1$ and 
$\E[(X^{\textbf e})^-] = \frac 12\E[|X^{\textbf e}| $. 
Now fix $A_0$ and estimate further 
using~\eqref{eq:lower.bound.xi}
\[
\E[|X^{\textbf e}|] 
\ge\frac{1}{A_0} \left(1- \E[|X^{\textbf e}|^2; |X^{\textbf e}|>A_0]\right)
\ge 
\frac{1}{A_0} \left(1- \E[|X|^2; |X|> A_0]\right).
\] 
Now pick $A_0$ sufficiently large 
to ensure that $\E[|X|^2; |X|> A_0]\le \frac 12$ to obtain 
\begin{equation}\label{eq:pz}
\E[(X^{\textbf e})^-]  \ge \frac{1}{4A_0}.
\end{equation}
Function $\frac{r}{\int_0^r \pr((X^{\textbf e})^->t)dt}$ 
is  non-decreasing and, therefore, 
\begin{align*}
\frac{x^{\textbf e}}{\int_0^{x^{\textbf e}} \pr((X^{\textbf e})^->t)dt} 
&\le 
\frac{x^{\textbf e}+\frac 12 \E[(X^{\textbf e})^-]}{\int_0^{x+\frac 12 \E[(X^{\textbf e})^-]} \pr((X^{\textbf e})^->t)dt} \\
&\le 
\frac{x^{\textbf e}+\frac 12\E[(X^{\textbf e})^-]}{\int_0^{\frac 12\E[(X^{\textbf e})^-]} \pr((X^{\textbf e})^->t)dt} \\
&\le 
\frac{x^{\textbf e}+\frac 12\E[(X^{\textbf e})^-]}{\frac 12\E[(X^{\textbf e})^-] \pr((X^{\textbf e})^->\frac 12\E[(X^{\textbf e})^-])}. 
\end{align*}
We can now makes use of the estimate 
$\E[(X^{\textbf e})^-] \le \sqrt {\E[((X^{\textbf e})^-)^2]}\le 1$ and~\eqref{eq:pz} to obtain 
\[
\frac{x^{\textbf e}}{\int_0^{x^{\textbf e}} \pr((X^{\textbf e})^->t)dt}  
\le \frac{x^{\textbf e}+\frac{1}{2}}{\frac{1}{8A_0}}
=8A_0\left(x^{\textbf e}+\frac{1}{2}\right).
\]
Next we can note that 
\[
\E[S^{\textbf e}(\sigma_-)]
=\frac{1}{2 \E[S^{\textbf e}(\sigma_+)]}
\le 
\frac{1}{2 E [(X^{\textbf e}])^- }
\le 2A_0. 
\]
Combining these estimates we obtain 
\[
x^{\textbf e} - 
\E[ S^{\textbf e}(\tau^{\textbf e}_x)] 
\le 32A_0^2 \left(x^{\textbf e}+\frac{1}{2}\right)
\]
As a result, 
\[
\pr(\tau_x^{(\textbf{e})}>n) \le 
64A_0^2A
\frac{x^{\textbf e}+\frac{1}{2}}{\sqrt n},
\]
where  constants $A$ and $A_0$ are independent of direction $\textbf{e}.$ 
Since this inequality holds for each $\textbf{e}$ we can see that 
    \[
    \P(\tau_x>n) \le C 
    \frac{\delta(x+x_0)}{\sqrt{n}}.
    \]
    Hence, the first statement follows. 
    Now let $\delta(x)<\varepsilon \sqrt{n}$. 
    Taking $n\ge 1/\varepsilon^2$, 
    \[
    \P(\tau_x>n)\le C \frac{1+\varepsilon \sqrt{n}}{n}
    \le 2C\varepsilon
    \]
    we obtain the second statement. 
\end{proof}
\begin{lemma}\label{lem:p.tau}
Let  Assumption \ref{ass-m-h}  with $p\ge 1$ hold. Then there exists a constant $C$ such that 
\[
\P(\tau_x>n) \le C\frac{V_\beta(x)}{n^{p/2}} \le C\frac{h(x+Rx_0)}{n^{p/2}}.
\]
\end{lemma}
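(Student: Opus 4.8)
The plan is to turn the near-boundary bound of Lemma~\ref{lemma:bound.near.boundary} and the superharmonicity of $V_\beta$ (Lemma~\ref{sum.beta}) into a self-improving recursion in $n$. For $n$ larger than a fixed threshold $N_0$, apply the Markov property at time $m=\lfloor n/2\rfloor$ to write
\[
\P(\tau_x>n)=\E\big[\P_{x+S(m)}(\tau>n-m);\ \tau_x>m\big].
\]
On $\{\tau_x>m\}$ the point $y=x+S(m)$ lies in $K$, and since $n-m\ge n/2\ge N_0/2$, Lemma~\ref{lemma:bound.near.boundary} gives $\P_y(\tau>n-m)\le C\delta(y+x_0)/\sqrt{n-m}\le C'\delta(x+S(m)+x_0)/\sqrt n$. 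Using the bound $h(z)\ge c\,\delta(z)^p$ valid for $z\in K$ when $K$ is convex (recalled in the proof of Lemma~\ref{lem:green.bound0}, from~\cite{denisov_wachtel19}), together with the monotonicity $h(z+x_0)\le h(z+Rx_0)$ for $R\ge 1$ coming from $h=\prod_i\langle\cdot,\alpha_i\rangle$ and $\langle x_0,\alpha_i\rangle>0$, one gets $\delta(x+S(m)+x_0)\le C\,h(x+S(m)+Rx_0)^{1/p}$, so that
\[
\P(\tau_x>n)\le \frac{C}{\sqrt n}\,\E\big[h(x+S(m)+Rx_0)^{1/p};\ \tau_x>m\big].
\]

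Next, since $p\ge 1$ and $h(x+S(m)+Rx_0)\ge 0$ on $\{\tau_x>m\}$, Hölder's inequality with exponents $p$ and $p/(p-1)$ yields
\[
\E\big[h(x+S(m)+Rx_0)^{1/p};\ \tau_x>m\big]\le \P(\tau_x>m)^{1-1/p}\,\big(\E[h(x+S(m)+Rx_0);\ \tau_x>m]\big)^{1/p},
\]
and by $h(\cdot+Rx_0)\le V_\beta(\cdot)$ and the supermartingale property from Lemma~\ref{sum.beta} we have $\E[h(x+S(m)+Rx_0);\tau_x>m]\le\E[V_\beta(x+S(m));\tau_x>m]\le V_\beta(x)$. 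Combining, we obtain the recursion
\[
\P(\tau_x>n)\le \frac{C_1}{\sqrt n}\,\P(\tau_x>m)^{1-1/p}\,V_\beta(x)^{1/p},\qquad m=\lfloor n/2\rfloor .
\]
Now prove $\P_x(\tau>n)\le C V_\beta(x)n^{-p/2}$ by strong induction on $n$. For $n\le N_0$ the bound is trivial: $\P_x(\tau>n)\le 1$ while $V_\beta(x)=h(x+Rx_0)+3U_\beta(x+Rx_0)\ge h(x+Rx_0)\ge c\,(R\,\mathrm{dist}(x_0+K,\partial K))^p$ is bounded below by a positive constant, so any $C\ge N_0^{p/2}/c_0$ works there. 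For $n>N_0$, feed the inductive hypothesis $\P(\tau_x>m)\le C V_\beta(x)m^{-p/2}$ (with $m\ge n/3$) into the recursion to get $\P(\tau_x>n)\le C_1 C^{1-1/p}3^{(p-1)/2}V_\beta(x)n^{-p/2}$; the induction closes as soon as $C\ge(C_1 3^{(p-1)/2})^p$.

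Finally, for $R$ sufficiently large, $\delta(x+Rx_0)\ge R\,\mathrm{dist}(x_0+K,\partial K)$ exceeds the threshold in Lemma~\ref{lem:bound.u.beta}, so $U_\beta(x+Rx_0)\le h(x+Rx_0)$ and hence $V_\beta(x)\le 4\,h(x+Rx_0)$, which upgrades the estimate to $\P(\tau_x>n)\le C\,h(x+Rx_0)n^{-p/2}$. The main point requiring care is that all constants entering the recursion — in particular the constant in Lemma~\ref{lemma:bound.near.boundary}, which must be uniform over directions and hence over starting points near the boundary — do not degenerate; once this uniformity is in hand, the induction and the final comparison $V_\beta\le Ch(\cdot+Rx_0)$ are routine.
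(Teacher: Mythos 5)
Your argument is correct, and it reaches the bound by a genuinely different route from the paper. The paper splits at time $n/2$ according to whether $\delta(x+S(n/2))$ is larger or smaller than $\varepsilon\sqrt n$: on the far-from-boundary event it applies Markov's inequality with $h(z)\ge c\,\delta(z)^p$ and the supermartingale property of $V_\beta$, and on the near-boundary event it applies the Markov property together with the second statement of Lemma~\ref{lemma:bound.near.boundary} to extract a factor $C\varepsilon^q\,\P(\tau_x>n/2)$; this yields the additive recursion $\P(\tau_x>n)\le C\varepsilon^{-p}V_\beta(x)n^{-p/2}+C\varepsilon^q\P(\tau_x>n/2)$, which is iterated $O(\log n)$ times after choosing $\varepsilon$ so small that $C\varepsilon^q<1$. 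You instead avoid the spatial splitting entirely: you apply the first statement of Lemma~\ref{lemma:bound.near.boundary} on the whole of $\{\tau_x>m\}$, convert $\delta$ into $h^{1/p}$ via the same lower bound $h\ge c\,\delta^p$ (plus the monotonicity $h(y+x_0)\le h(y+Rx_0)$, which uses the product structure from (P4)), and then interpolate by H\"older between $\P(\tau_x>m)$ and the supermartingale bound $\E[h(x+S(m)+Rx_0);\tau_x>m]\le V_\beta(x)$. The resulting multiplicative recursion $\P(\tau_x>n)\le C_1 n^{-1/2}\P(\tau_x>m)^{1-1/p}V_\beta(x)^{1/p}$ closes by a clean induction on $n$ because the exponent $1-1/p<1$ makes the fixed-point equation for the constant solvable. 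Both proofs rest on exactly the same two ingredients (the uniform near-boundary estimate and the superharmonicity of $V_\beta$); yours trades the paper's $\varepsilon$--$\log n$ bookkeeping for one application of H\"older, at the mild cost of invoking the explicit form of $h$ for the monotonicity step, and is arguably the shorter of the two. The base case and the final comparison $V_\beta(x)\le 4h(x+Rx_0)$ via Lemma~\ref{lem:bound.u.beta} are handled correctly.
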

\begin{proof}
We fix $\varepsilon>0$, 
then split the probability in two parts,
\begin{align*}
\P(\tau_x>n) &\le \P\left(\tau_x>n, \delta(x+S(n/2)) \ge \varepsilon \sqrt{n}\right)\\&+ \P\left(\tau_x>n, \delta(x+S(n/2)) <\varepsilon \sqrt{n}\right). 
\end{align*}
Applying  the  Markov inequality and using the fact that  
$V_\beta(x+S(n)) \mathbbm{1}\{\tau_x>n\}$ is a supermartingale  we obtain,
using $h(x) \geq \delta(x)^p$, 
\begin{equation}\label{eq:p.tau.1}
\begin{split}
    &\P\left(\tau_x>n, \delta(x+S(n/2)) \ge \varepsilon \sqrt{n}\right)\le C\frac{\E[h(x+S(n/2)), \tau_x>n]}{(\varepsilon\sqrt{n})^p}\\& \le C\frac{\E[V_\beta (x+S(n/2)), \tau_x>n/2]}{(\varepsilon\sqrt{n})^p} \le C\frac{V_\beta(x)}{(\varepsilon\sqrt{n})^p} \le \frac{C}{\varepsilon^{p/2}}\frac{h(x+Rx_0)}{n^{p/2}}, 
    \end{split}
\end{equation}
where we also used Lemma~\ref{lem:bound.u.beta} in the last inequality. 
Using the  Markov property in the first inequality 
and Lemma~\ref{lemma:bound.near.boundary} in the second we obtain 
\begin{multline}\label{eq:p.tau.2}
\P\left(\tau_x>n, \delta(x+S(n/2)) <\varepsilon \sqrt{n}\right)\\\le
    \int_{\delta(y)< \varepsilon \sqrt{n}} \P\left(\tau_x >\frac{n}{2}, x+S\left(\frac{n}{2}\right) \in dy\right) \P\left(\tau_y>\frac{n}{2}\right)\\
\le C\varepsilon^q \P(\tau_x>n/2). 
\end{multline}
Combining~\eqref{eq:p.tau.1} and \eqref{eq:p.tau.2} together,
\[
\P(\tau_x>n) \le C\frac{V_\beta(x)}{(\varepsilon \sqrt{n})^p}+ C\varepsilon^q \P\left(\tau_x> \frac{n}{2}\right),
\]
where constant $C$ does not depend on $n$ and $\varepsilon$. 
Iterating it $N$ times,
\[
\P(\tau_x>n) \le C \sum^{N-1}_{i=0} (C\varepsilon^q)^i \frac{V_\beta(x)}{(\varepsilon \sqrt{n})^p}+ \varepsilon^{qN} \P\left(\tau_x> \frac{n}{2^N}\right).
\]
Now we can take $\varepsilon $ sufficiently small 
to ensure that $C\varepsilon^q<1$. 
Then
\[
\P(\tau_x>n) \le \frac{C}{1-C\varepsilon^q} \frac{1}{\varepsilon^{p/2}}\frac{V_\beta(x)}{{n}^{p/2}}+ (C\varepsilon^q)^N. 
\]
Taking $N> \frac{p\log n}{-2\ln (C\varepsilon^q)}$ 
we then obtain 
\[
\P(\tau_x>n) \le \frac{C}{1-C\varepsilon} \frac{1}{\varepsilon^{p/2}}\frac{V_\beta(x)}{{n}^{p/2}}+ \frac{1}{n^{p/2}}. 
\]
The statement then follows. 
\end{proof}

\section{Proof of Theorem~\ref{thm.p.tau-poly}.}
\label{sec:tail_asymptotics}

\begin{proof}[Proof of Theorem~\ref{thm.p.tau-poly}] 
Choose $m=m(n)=o(n)$ and fix $\varepsilon<1$ and $\lambda>1$. 
Let 
\begin{align*} 
M(x)  &= \max(|x_1|,\ldots, |x_d|), \quad x=(x_1,\ldots,x_d)\in K.
\end{align*}
Then we divide $K$ into three domains, \begin{align*}
    K_{1} &:=\left\{y \in K, \delta(y) \le \varepsilon \sqrt{m}, M(y)\le \lambda\sqrt{m}\right\} \\ 
    K_{2} &:=\left\{y \in K, \delta(y) > \varepsilon \sqrt{m}, M(y) \le \lambda \sqrt{m}\right\} \\ K_{3} &:=\left\{y \in K, M(y) > \lambda \sqrt{m}\right\}. 
\end{align*}
Let $D$ be a compact set in $K$ or $D=K$.   Using Markov property at $m <n$, 
\[
\P \left(\frac{x+S(n)}{\sqrt{n}} \in D, \tau_x>n\right) = P_1+P_2+P_3,
\]
where 
    \begin{align*}
      P_1&:= \int_{K_{1}} \P(x+S(m) \in d y, \tau_x>m) \P \left(\frac{y+S(n-m)}{\sqrt{n}} \in D, \tau_y > n-m\right) \\
      P_2&:=\int_{K_{2}} \P(x+S(m) \in d y, \tau_x>m) \P \left(\frac{y+S(n-m)}{\sqrt{n}} \in D, \tau_y > n-m\right)\\P_3&:=\int_{K_{3}} \P(x+S(m) \in d y, \tau_x>m) \P \left(\frac{y+S(n-m)}{\sqrt{n}} \in D, \tau_y > n-m\right).
    \end{align*}
The first integral can be bounded using  Lemma~\ref{lem:p.tau},
\begin{align*}
P_1& \le \int_{K_{1}} \P(x+S(m) \in d y, \tau_x>m) \P \left(\tau_y > n-m\right) \\& \le \frac{C}{n^{p/2}} \int_{K_{1}} \P(x+S(m) \in d y, \tau_x>m) h(y+Rx_0)
\end{align*}
Since
\[
h(y+Rx_0) \le \varepsilon\sqrt{m} (\lambda \sqrt{m})^{p-1}\le \varepsilon \lambda^{p-1} {m}^{p/2},\quad y\in W_{K},
\]
 using  Lemma~\ref{lem:p.tau} again,
\begin{equation}
    \label{eq:bound.w1}
    P_1\le \frac{C}{n^{p/2}} \varepsilon \lambda^{p-1} \sqrt{m}^{p}\P_x \left(\tau_x > m\right) \le \frac{C}{n^{p/2}} \varepsilon \lambda^{p-1} h(x+Rx_0). 
\end{equation}
Next we consider $P_3$.  Applying  Lemma~\ref{lem:p.tau},
\begin{align*}
   P_3& \le \frac{C}{n^{p/2}} \int_{K_{3}} \P(x+S(m) \in d y, \tau_x>m) h(y+Rx_0) \\& \le \frac{C}{n^{p/2}} \E \left[h(x+Rx_0+S(m)), \tau_x >m, x+S(m) \in K_{3}\right]. 
\end{align*}
By the Taylor expansion 
\[
h(x+Rx_0+S(m)) 
\le 
\sum_{k=0}^{p} \frac{1}{k!} \sum_{\alpha_1+\cdots+\alpha_d = k}\frac{\partial^k h(x+Rx_0)}{\partial x_1^{\alpha_1} \cdots \partial x_d^{\alpha_d}} 
|S_1(m)|^{\alpha_1} \cdots |S_d(m)|^{\alpha_d}. 
\]
Then we get the following bound, uniformly in $M(x)= o(\sqrt{n})$, 
\begin{align*}
    &\E \left[h(x+Rx_0+S(m)), \tau_x >m, x+S(m) \in K_{3}\right] \\& \le
    \sum_{k=0}^{p} \frac{1}{k!} \sum_{\alpha_1+\cdots+\alpha_d = k}\frac{\partial^k h(x+Rx_0)}{\partial x_1^{\alpha_1} \cdots \partial x_d^{\alpha_d}} \E \left[ |S_1(m)|^{\alpha_1} \cdots|S_d(m)|^{\alpha_d}, x+S(m)\in K_{3}\right]
\\
    & 
    \le \sum_{k=0}^{p} \frac{1}{k!} \sum_{\alpha_1+\cdots+\alpha_d = k}\frac{\partial^k h(x+Rx_0)}{\partial x_1^{\alpha_1} \cdots \partial x_d^{\alpha_d}}
    \sum_{j=1}^d 
    \E \left[|S_j(m)|^{\alpha_{j}}, |S_j(m)|>\lambda \sqrt{m}\right].
\end{align*}
Making  use of the equation (53) in \cite{denisov2023markov},
\[
\E \left[|S_{j}(m)|^{\alpha_{j}}, |S_{j}(m)|>\frac{1}{2}\lambda \sqrt{m}\right] \le C \int_{\lambda \sqrt{m}/2}^\infty z^{\alpha_{j}-1} \P(|S_{j}(m)|>z) dz.
\]
From Corollary 23 in \cite{denisov_wachtel15} with $x=z$ and $y=\delta z$, we obtain,
\[
\P(|S_{j}(m)|>z) \le 2d e^{\frac{1}{\delta \sqrt{n}}}\left(\frac{\sqrt{d} m}{\delta z^2}\right)^{\frac{1}{\delta \sqrt{n}}} +m \P\left(|X_j(1)|>\delta z\right). 
\]
Integrating the first integral by fixing sufficiently  small $\varepsilon$ such that $\alpha_{j}-1-\frac{2}{\delta\sqrt{d}} \le -2$,
\begin{equation}\label{eq:fuk.0}
    \begin{split}
    &  C\int_{\lambda\sqrt{m}/2}^\infty z^{\alpha_{j}-1} 2d e^{\frac{1}{\delta \sqrt{d}}}\left(\frac{\sqrt{d} m}{\delta  z^2}\right)^{\frac{1}{\delta \sqrt{d}}} dz \\& \le 2Cd e^{\frac{1}{\delta \sqrt{d}}}\left(\frac{\sqrt{d} m}{\delta }\right)^{\frac{1}{\delta \sqrt{d}}} \left(\frac{\lambda\sqrt{m}}{2}\right)^{\alpha_{j}-\frac{2}{\delta\sqrt{d}}}\\&\le 2Cd e^{\frac{1}{\varepsilon \sqrt{d}}}\left(\frac{\sqrt{d}}{\varepsilon }\right)^{\frac{1}{\delta \sqrt{d}}} \left(\frac{\lambda}{2}\right)^{\alpha_{j}-\frac{2}{\delta\sqrt{d}}} m^{\alpha_{j}/2} \le C m^{\alpha_{j}/2} 
    \lambda^{-1}. 
 \end{split}
\end{equation}
We estimate the second integral separately for $r=2$  and other cases. For $r=2$, using  Assumption \ref{ass-m}, the Markov inequality and equation (53) in \cite{denisov2023markov},
\begin{equation}\label{eq:fuk.1}
    \begin{split}
    & C \int_{\lambda \sqrt{m}/2}^\infty z^{\alpha_{j}-1} m \P\left(|X_{j}(1)|>\delta z\right) dz \\&\le \frac{Cm}{\delta^{\alpha_{j}} }\int_{\lambda\sqrt{m} \delta/2}^\infty z^{\alpha_{j}-1}\P\left(|X_{j}(1)|> z\right) dz \\&\le \frac{Cm}{\delta^{\alpha_{j}} } \left(\frac{2}{\lambda\sqrt{m}\delta}\right)^{2-\alpha_{j}} \int_{\lambda\sqrt{m} \delta/2}^\infty z\P\left(|X_{j}(1)|> z\right) dz \\&\le \frac{C m^{\alpha_{j}/2}}{\delta^2} \frac{\E\left[|X|^2\log{1+|X|}, |X|>\frac{\lambda\sqrt{m}\delta}{2}\right]}{\log{(1+\lambda\sqrt{m}\delta/2})} := \frac{Cm^{\alpha_{j}/2}}{\delta^2} f_1(\lambda),
   \end{split}
\end{equation}
where $f_1(\lambda)\rightarrow 0$ as $\lambda\rightarrow \infty$.

For $r>2$ in other cases, using Assumption \ref{ass-m} and Markov inequality again, 
\begin{equation}\label{eq:fuk.2}
    \begin{split}
     &  C\int_{\lambda\sqrt{m}/2}^\infty z^{\alpha_{j}-1} m \P\left(|X_{j}(1)|>\delta z\right) dz \\&\le \frac{Cm}{\delta^{\alpha_{j}} }\int_{\lambda\sqrt{m} \delta/2}^\infty z^{\alpha_{j}-1}\P\left(|X_{j}(1)|> z\right) dz \\&\le \frac{Cm}{\delta^{\alpha_{j}}} \frac{ \E\left[|X|^{r}, |X|>\frac{\lambda\sqrt{m}\delta}{2}\right]}{(\lambda\sqrt{m}\delta)^{r-\alpha_{j}}}\\&\le \frac{c \sqrt{m}^{2-r+\alpha_{j}}}{\lambda^{r-\alpha_{j}}}\E\left[|X|^{r}, |X|>\frac{\lambda\sqrt{m}\delta}{2}\right] \\&\le  \frac{c \sqrt{m}^{\alpha_{j}}}{\lambda^{r_Z-\alpha_{j}}}\E\left[|X|^{r}, |X|>\frac{\lambda\sqrt{m}\delta}{2}\right] := c m^{\alpha_{j}/2} f_2(\lambda)    
    \end{split}
\end{equation}
Since $\alpha_{j}<r$, we can see that $f_2(\lambda) \rightarrow 0$ as $\lambda \rightarrow \infty$.
Combining equation~\eqref{eq:fuk.0}, \eqref{eq:fuk.1} and \eqref{eq:fuk.2}, we can conclude that,
\begin{equation}  \label{eq:fuk}
\E \left[|S_{j}(m)|^{\alpha_{j}}, |S_{j}(m)|>\lambda\sqrt{m}\right] \le C \sqrt{m}^{\alpha_{j}} f(\lambda),
\end{equation}
where $f(\lambda) \rightarrow 0$ as $\lambda \rightarrow \infty$.
With equation~\eqref{eq:fuk}, integral over $K_{3}$ can be bounded as,
\begin{align}
    \label{eq:bound.w3}
   & \frac{C}{n^{p/2}}
   \sum_{k=0}^{p} \frac{1}{k!} \sum_{\alpha_1+\cdots+\alpha_d = k}\frac{\partial^k h(x+Rx_0)}{\partial x_1^{\alpha_1} \cdots \partial x_d^{\alpha_d}}
   \sum_{j=1}^{d} (\lambda\sqrt{m})^{p-\alpha_{j}}  \E \left[|S_{j}(m)|^{\alpha_{j}}, |S_j(m)|>\frac{1}{2}\lambda\sqrt{m}\right] \nonumber \\& \le \frac{C}{n^{p/2}}
   \sum_{k=0}^{p} \frac{1}{k!} \sum_{\alpha_1+\cdots+\alpha_d = k}\frac{h(x+Rx_0)}{\delta(x+Rx_0)^k}
   \sum_{j=1}^{d} (\lambda\sqrt{m})^{p-\alpha_{j}} m^{\alpha_{j}/2} f(\lambda) \nonumber \\
   & \le Ch(x+Rx_0)f(\lambda),
   \end{align}
where $f(\lambda) \rightarrow 0$ as $\lambda \rightarrow \infty$.

Next, 
using a  strong coupling of random walks with Brownian motion 
similarly to~\cite{denisov_wachtel21} 
we obtain  uniformly in $y \in K_{2}$ with sufficiently large $m(n)=o(n)$, 
\begin{align*}
    \P\left(\frac{y+S(n-m)}{\sqrt{n}} \in D, \tau_y>n-m\right) & 
    \sim \P\left(\frac{y}{\sqrt{n}}+B\left(\frac{n-m}{n}\right)\in D, \tau^{bm}_y >n-m\right) \\& \sim \varkappa\frac{h(y)}{n^{\frac{p}{2}}} \int_{D}\exp\left({-\frac{|z|^2}{2}}\right) h(z) d z. 
\end{align*}
Using Lemma~\ref{lem:p.tau}, 
\begin{align*}
   P_2& \sim \frac{\varkappa}{n^{\frac{p}{2}}} \int_{D} e^{-|z|^2/2} h^Z(z) dz\int_{K_{2}} \P(x+S(m) \in d y, \tau_x>m) h(y).  
\end{align*}
Estimating  the integral over $K_{2}$ by the bounds of $K_{1}$ and $K_{3}$,
\begin{equation}
\label{eq:bound.w2}
    \begin{split}
    &\Biggl|\int_{K_{2}} \P\left(x+S(m) \in d y, \tau_x>m\right) \P\left(\frac{y+S(m)}{\sqrt{n}} \in D, \tau_y > n-m\right) \\& - \frac{C}{n^{\frac{p}{2}}} \int_D h(z) \exp\left({-\frac{|z|^2}{2}}\right) d z \int_{K} \P(x+S(m)\in dy, \tau_x>m)h(y) \Biggr| \\& =\left|\frac{C}{n^{\frac{p}{2}}}\int_{K_{1} \bigcup K_{3}}\P(x+S(m) \in d y, \tau_x>m) h^Z(y)\ \int_{D} h^Z(z) \exp\left({-\frac{|z|^2}{2}}\right) d z\right| \\& \le \frac{C}{n^{\frac{p}{2}}} \left(\varepsilon \lambda^{p-1}\right) h(x+Rx_0)+Cf(\lambda).
    \end{split}
\end{equation}
Combine these three bounds of equation \eqref{eq:bound.w1}, \eqref{eq:bound.w2} and \eqref{eq:bound.w3},
\begin{align*}
&\Bigg|\P\left(\frac{x+S(n)}{\sqrt{n}}\in D,\tau_x>n\right)
-\frac{\varkappa}{n^{p/2}}\int_D h(z)e^{-|z|^2/2}dz
\E[h(x+S(m));\tau_x>m]\Bigg|\\
&\hspace{1cm} \le \frac{C}{n^{\frac{p}{2}}} \left(\varepsilon \lambda^{p-1}\right) h(x+Rx_0)+Cf(\lambda).
\end{align*}
uniformly in $\max_{1\le j \le d}\{|x_j|\}< \sqrt{m}$.
Letting  $\varepsilon \to 0$ and $\lambda \to \infty$, and taking  the limit with $m \rightarrow \infty$, we prove part (ii) of  Theorem \ref{thm.p.tau-poly} since
\begin{equation}
    \P \left( \frac{x+S(n)}{\sqrt{n}} \in D, \tau_x >n \right) \sim \frac{\varkappa V(x)}{n^\frac{p}{2}} \int_D h(z)\exp\left\{-\frac{|z|^2}{2}\right\} d z
\end{equation}
and 
\begin{equation*}
    \P(\tau_x>n) \sim \frac{\varkappa V(x)}{n^\frac{p}{2}}
    \int_K h(z)\exp\left\{-\frac{|z|^2}{2}\right\} d z
    . 
\end{equation*}
\qedhere

\end{proof}

\appendix

\bibliographystyle{abbrv}
 \bibliography{ExpOrdWalk}

\end{document}